\newtheorem{theorem}{Theorem}[section]
\newtheorem{prop}[theorem]{Proposition}
\newtheorem{lemma}[theorem]{Lemma}
\newtheorem{remark}[theorem]{Remark}
\def\xxint#1#2#3{{\setbox0=\hbox{$#1{#2#3}{\int}$}
  \vcenter{\hbox{$#2#3$}}\kern-.5\wd0}}
\def \rd {{\mathbb R}^d}
\def\loc{\text{\rm loc}}
\def\varep{\varepsilon}
\newcommand{\average}{-\!\!\!\!\!\!\int}
\begin{document}

\title
{\bf Convergence Rates and H\"older Estimates \\ in Almost-Periodic Homogenization \\ of
Elliptic Systems}

\author{Zhongwei Shen\thanks{Supported in part by NSF grant DMS-1161154.}}


\date{ }

\maketitle

\begin{abstract}

For a family of second-order elliptic systems in divergence form with 
rapidly oscillating almost-periodic coefficients, we obtain estimates for approximate correctors
in terms of a function that quantifies the almost periodicity of the coefficients.
The results are used to investigate the problem of convergence rates.
We also establish uniform H\"older estimates for the Dirichlet problem
in a bounded $C^{1, \alpha}$ domain.

\bigskip

\noindent{\it MSC2010:} \ \ 35B27, 35J55.

\noindent{\it Keywords:} homogenization; almost periodic coefficients; approximate correctors; convergence rates.

 \end{abstract}

 \medskip


\section {Introduction and statement of main results}
\setcounter{equation}{0}

In this paper we consider a family of second-order elliptic operators in divergence form
with rapidly oscillating {\it almost-periodic} coefficients,
\begin{equation}\label{operator}
\mathcal{L}_\varep =-\text{div} \left(A(x/\varep)\nabla \right)
=-\frac{\partial}{\partial x_i}
\left[ a_{ij}^{\alpha\beta} \left(\frac{x}{\varep}\right)\frac{\partial}{\partial x_j}\right], 
\quad \varep>0.
\end{equation}
We will assume that $ A(y)=\big( a_{ij}^{\alpha\beta} (y)\big)$
with $1\le i,j\le d$ and $1\le\alpha,\beta\le m$ is real and satisfies the ellipticity condition
\begin{equation}\label{ellipticity}
\mu |\xi|^2 \le a_{ij}^{\alpha\beta} (y)
\xi_i^\alpha\xi_j^\beta \le \frac{1}{\mu} |\xi|^2 
\quad
\text{ for } y\in \rd \text{ and } \xi =(\xi_i^\alpha)\in \mathbb{R}^{d\times m},
\end{equation}
where $\mu>0$ (the summation convention is used throughout the paper).
We further assume that $A=A(y)$ is uniformly almost-periodic in $\rd$; 
i.e., $A$ is the uniform limit of a sequence of trigonometric polynomials in $\rd$.

Let $\Omega$ be a bounded Lipschitz domain in $\rd$.
Let $u_\varep \in H^1(\Omega; \mathbb{R}^m)$ be the weak solution of the Dirichlet problem:
\begin{equation}\label{DP}
\mathcal{L}_\varep (u_\varep )= F \quad \text{ in } \Omega \quad \text{ and } \quad
u_\varep =g \quad \text{ on } \partial\Omega,
\end{equation}
where $F\in H^{-1}(\Omega; \mathbb{R}^m)$ and $g\in H^{1/2}(\partial\Omega; \mathbb{R}^m)$.
Under the ellipticity condition (\ref{ellipticity}) and almost periodicity condition on $A$,
it is known that $u_\varep \to u_0$ weakly in $H^1(\Omega;\mathbb{R}^m)$ 
and thus strongly in $L^2(\Omega;\mathbb{R}^m)$, 
as $\varep \to 0$.
Furthermore, the function $u_0$ is the solution of
\begin{equation}\label{DP-H}
\mathcal{L}_0 (u_0 )= F \quad \text{ in } \Omega \quad \text{ and } \quad
u_0 =g \quad \text{ on } \partial\Omega,
\end{equation}
where $\mathcal{L}_0 =-\text{\rm div} \big(\widehat{A} \nabla\big)$
is a second-order elliptic operator with constant coefficients, uniquely determined by $A(y)$.
As in the periodic case (see e.g. \cite{bensoussan-1978}), 
the constant matrix $\widehat{A}=\big( \widehat{a}_{ij}^{\alpha\beta}\big)$
is called the homogenized matrix for $A$
and $\mathcal{L}_0$ the homogenized operator for $\mathcal{L}_\varep$.
In this paper we shall be interested in quantitative homogenization results
as well as uniform estimates for solutions of (\ref{DP}).

Homogenization of elliptic equations with rapidly oscillating almost-periodic or random coefficients 
was studied first by S. M. Kozlov in \cite{Kozlov-1979, Kozlov-1980} and by
G.C. Papanicolaou and S.R.S. Varadhan in \cite{Papanicolaou-1979}.
In particular, the $o(1)$ convergence rate of $u_\varep -u_0$ in $C^\sigma(\overline{\Omega})$
for some $\sigma>0$ was obtained in \cite{Kozlov-1979} for a scalar second-order elliptic equation
 in divergence form with almost-periodic coefficients.
 Under some additional condition on the frequencies in the spectrum of $A(y)$,
 the sharp $O(\varep)$ rate in $C(\overline{\Omega})$ 
 was proved in \cite{Kozlov-1979} for operators with sufficiently smooth quasi-periodic coefficients.
 It is known that without additional structure conditions on $A(y)$, the $O(\varep)$ rate cannot be expected in general (see \cite{Bondarenko-2005} for  some interesting results in the 1-d case).

In contrast to the periodic case,
 the equation for the exact correctors $\chi (y)$,
\begin{equation}\label{c}
-\text{\rm div} \big(A (y)\nabla \chi (y) \big) =\text{\rm div} \big(A(y) \nabla P(y)\big)\quad \text{ in } \rd,
\end{equation}
may not be solvable in the almost-periodic (or random) setting for linear functions $P(y)$.
In \cite{Kozlov-1979}   solutions $\chi (y)$ of (\ref{c}) with sub-linear growth and almost-periodic
gradient were constructed, and as a result, homogenization was obtained,
 for operators with trigonometric polynomial coefficients by a lifting method. The homogenization
 result for the general case follows by an approximation argument.
  A different approach, which also gives the homogenization of the second-order elliptic equations
 with random coefficients, is to formulate and solve an abstract auxiliary equation 
 in a Hilbert space for $\psi (y)=\nabla \chi (y)$.
 We outline this approach in Section 2 and refer the reader to \cite{Jikov-1994}
 for a detailed presentation and references.
 
Another approach to homogenization involves
 the use of the so-called approximate correctors \cite{Papanicolaou-1979, Kozlov-1980}.
 Under certain mixing conditions,
the approach has been employed successfully to establish quantitative homogenization results
for second-order linear elliptic equations and systems in divergence form with random coefficients  in
 \cite{Yurinskii-1986, Yurinskii-1990, Bourgeat-2004}.
 For nonlinear second-order elliptic equations
 and Hamilton-Jacobi equations, we refer the reader to
 \cite{Caffarelli-2010, Armstrong-2014, Smart-2014} for recent advances and references on quantitative homogenization results.
 We point out that the almost-periodic case, which does not satisfy the mixing conditions generally imposed
 in the random case, is studied 
 in  \cite{Caffarelli-2010, Armstrong-2014}.
 We also mention that sharp quantitative results 
 were obtained recently in \cite{Otto-2011, Otto-2012, Gloria-2014} 
 for stochastic homogenization of discrete linear elliptic equations in divergence form. 
 
In this paper we carry out  a quantitative study of the approximate correctors $\chi_T
=\big(\chi_{T, j}^\beta\big)$ for $\mathcal{L}_\varep$
 in (\ref{operator}), where, for $1\le j\le d$ and $1\le \beta\le m$, $u=\chi_{T, j}^\beta$ is defined by
 \begin{equation}\label{ap-c-0}
 -\text{\rm div} \big( A(y)\nabla u\big)
 +T^{-2}  u =\text{\rm div} \big( A(y)\nabla P_j^\beta (y)\big) \quad \text{ in } \rd,
 \end{equation}
 and $P_j^\beta (y) =y_j (0, \cdots, 1, \cdots, 0)$ with $1$ in the $\beta^{th}$ position.
 Among other things, we will prove that for $T\ge 1$ and $\sigma \in (0,1)$,
 \begin{equation}\label{ap-estimate-0}
 T^{-1} \| \chi_T \|_{L^\infty (\rd)} \le C_\sigma\, \Theta_\sigma (T),
 \end{equation}
 \begin{equation}\label{ap-estimate-0-0}
 |\chi_T (x) -\chi_T (y)|\le C_\sigma \, T^{1-\sigma} |x-y|^\sigma \quad \text{ for any } x, y\in \rd,
 \end{equation}
 and for $0<r\le T$,
 \begin{equation}\label{ap-estimate-1}
 \sup_{x\in \rd}
 \left(\average_{B(x,r)} |\nabla \chi_T|^2\right)^{1/2}
 \le C_\sigma \left(\frac{T}{r} \right)^\sigma,
 \end{equation}
 where $C_\sigma$ depends only on $d$, $m$, $\sigma$ and $A$. 
 The  continuous function $\Theta_\sigma (T)$, which is decreasing and
 converges to zero as $T\to \infty$, is defined by
\begin{equation}\label{Theta}
\Theta_\sigma (T)=\inf_{0<R\le T} \left\{ \rho (R) +\left(\frac{R}{T}\right)^\sigma \right\},
\end{equation}
where
\begin{equation}\label{rho}
\rho (R)=\sup_{y\in \rd} \inf_{\substack{z\in \rd\\ |z|\le R} }
\| A(\cdot +y) -A(\cdot+z)\|_{L^\infty(\rd)}
\end{equation}
is a decreasing and continuous function that quantifies the almost periodicity of $A$.
Indeed, a bounded continuous function $A$ in $\rd$ is uniformly almost-periodic
if and only if $\rho (R)\to 0$ as $R\to \infty$.

With the estimates (\ref{ap-estimate-0}), (\ref{ap-estimate-0-0}) and (\ref{ap-estimate-1}) at our disposal,
we obtain the following theorems on the convergence rates.
Our results in Theorems \ref{main-theorem-2} and \ref{main-theorem-3} are new even in the scalar case $m=1$.

\begin{theorem}\label{main-theorem-1}
Suppose that $A(y)=\big(a_{ij}^{\alpha\beta} (y)\big)$satisfies the ellipticity condition (\ref{ellipticity})
and is uniformly almost-periodic in $\rd$.
Let $p>d$, $\sigma \in (0,1)$, and $\Omega$ be a bounded $C^{1, \alpha}$ domain in $\rd$ for some $\alpha>0$.
Then there exists a modulus  $\eta: (0,1]\to [0, \infty)$, which depends only on $A$ and $\sigma$,
such that $\lim_{ t\to 0} \eta (t)=0$ and
\begin{equation}\label{1.1-0}
\| u_\varep -u_0\|_{C^\sigma (\overline{\Omega})} \le C\, \eta (\varep) \, \| u_0\|_{W^{2, p}(\Omega)}
\end{equation}
for $\varep \in (0,1)$,
whenever $u_\varep\in H^1(\Omega)$ is the weak solution of (\ref{DP})
and $u_0\in W^{2, p} (\Omega)$ the solution of (\ref{DP-H}).
Furthermore, we have
\begin{equation}\label{1.1-1}
\| u_\varep -u_0 -\varep \chi_T (x/\varep) \nabla u_0\|_{H^1(\Omega)}
\le C\, \eta (\varep) \, \| u_0 \|_{W^{2, p} (\Omega)},
\end{equation}
where $T=\varep^{-1}$ and $\chi_T(y)$ denotes the approximate corrector defined by (\ref{ap-c-0}).
The constants $C$ in (\ref{1.1-0}) and (\ref{1.1-1}) depend only on $\Omega$, $p$, $\sigma$ and $A$.
\end{theorem}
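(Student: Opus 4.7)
I would follow the standard two-scale expansion scheme, adapted to the almost-periodic setting by replacing the exact corrector with the approximate corrector $\chi_T$ at scale $T=\varep^{-1}$, smoothing the coarse-scale gradient $\nabla u_0$ at scale $\varep$, and cutting off in a $\delta$-neighbourhood of $\partial\Omega$. Concretely, set
\[
w_\varep(x) \ = \ u_\varep(x) - u_0(x) - \varep\,\chi_T(x/\varep)\, S_\varep\!\bigl(\eta_\delta\,\nabla u_0\bigr)(x),
\]
where $S_\varep$ is a standard mollification at scale $\varep$ and $\eta_\delta$ is a smooth cut-off supported at distance $\ge\delta$ from $\partial\Omega$, with $\delta=\delta(\varep)$ to be optimized. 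The cut-off secures $w_\varep\in H^1_0(\Omega)$ up to a boundary-layer error, while the mollification prevents the factor $\nabla^2 u_0$ that will appear upon differentiation from being uncontrollable when $u_0\in W^{2,p}\setminus C^2$.

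Using (\ref{DP}), (\ref{DP-H}) and the approximate-corrector equation (\ref{ap-c-0}), one computes
\[
\mathcal{L}_\varep w_\varep \ = \ \mathrm{div}\!\left( \bigl[\widehat{A} - A(\cdot/\varep) - A(\cdot/\varep)(\nabla\chi_T)(\cdot/\varep)\bigr]\, S_\varep(\eta_\delta\nabla u_0)\right) \ + \ R_\varep,
\]
where $R_\varep$ collects (a) the lower-order $T^{-2}\chi_T$ residue forced by (\ref{ap-c-0}), (b) commutator terms in which a derivative falls on $S_\varep(\eta_\delta\nabla u_0)$ rather than on $\chi_T(\cdot/\varep)$, controlled by (\ref{ap-estimate-1}) and elementary smoothing estimates, and (c) boundary-layer contributions from $\eta_\delta$. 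The bracketed main flux has zero mean by the very definition of $\widehat{A}$; writing it as the divergence of an antisymmetric almost-periodic flux potential $\mathfrak{B}_T$ --- constructed via the Hilbert-space approach sketched in Section 2, with sublinear growth controlled by (\ref{ap-estimate-0})--(\ref{ap-estimate-1}) in terms of $\Theta_\sigma$ --- converts that term into one with small $H^{-1}(\Omega)$ norm. A routine energy inequality followed by optimization of $\delta=\delta(\varep)$ then yields $\|w_\varep\|_{H^1(\Omega)}\le C\,\eta(\varep)\,\|u_0\|_{W^{2,p}(\Omega)}$ for a modulus $\eta(\varep)\to0$. Since $\|\varep\chi_T(\cdot/\varep)(\nabla u_0 - S_\varep(\eta_\delta\nabla u_0))\|_{H^1(\Omega)}$ is also $o(1)\,\|u_0\|_{W^{2,p}(\Omega)}$ by the same estimates, this proves (\ref{1.1-1}).

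For the Hölder bound (\ref{1.1-0}), combine (\ref{1.1-1}) with the trivial consequence of (\ref{ap-estimate-0})
\[
\|\varep\,\chi_T(\cdot/\varep)\nabla u_0\|_{L^2(\Omega)} \ \le\ \varep\,\|\chi_T\|_{L^\infty(\rd)}\,\|\nabla u_0\|_{L^2(\Omega)} \ \le\ C\,\Theta_\sigma(\varep^{-1})\,\|u_0\|_{W^{2,p}(\Omega)}
\]
(using $T=\varep^{-1}$) to obtain $\|u_\varep-u_0\|_{L^2(\Omega)}\le C\eta(\varep)\|u_0\|_{W^{2,p}(\Omega)}$, then interpolate against a uniform Hölder estimate $\|u_\varep\|_{C^{\sigma'}(\overline\Omega)}\le C\|u_0\|_{W^{2,p}(\Omega)}$ for some $\sigma'\in(\sigma,1)$, of the type established elsewhere in the paper by a compactness method adapted to the almost-periodic setting in $C^{1,\alpha}$ domains. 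The principal difficulty is the construction and sublinear quantitative control of the flux potential $\mathfrak{B}_T$: unlike in the periodic case it is not bounded, and its growth must be estimated through the same almost-periodicity modulus $\rho$ that governs $\chi_T$ itself. Balancing this growth against the boundary-layer loss $\ls\delta^{1/2}$, the commutator loss $\ls(\varep/\delta)^\sigma$, and the $T^{-2}\chi_T$ residue inside a single decaying modulus $\eta(\varep)$ is the technical heart of the argument.
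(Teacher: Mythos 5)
Your overall scaffold --- two-scale ansatz, energy estimate for $w_\varep$, boundary layer, divergence structure for the leading flux, then $L^2$ bound plus interpolation with a uniform H\"older estimate to get $C^\sigma$ --- matches the paper's route in shape, and your cut-off/mollification devices are acceptable variants of the paper's. (The paper instead subtracts a boundary corrector $v_\varep$ solving $\mathcal{L}_\varep v_\varep=0$ in $\Omega$, $v_\varep=\varep\chi_T(x/\varep)\nabla u_0$ on $\partial\Omega$, and uses $\nabla u_0$ directly rather than $S_\varep(\eta_\delta\nabla u_0)$; Lemma \ref{lemma-6.4} controls $\|v_\varep\|_{H^1(\Omega)}$ via $\|\nabla u_0\|_{L^\infty(\Omega)}$, which is precisely where the hypothesis $p>d$ enters.)

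There are two genuine gaps in your treatment of the flux $B_T(y)=\widehat{A}-A(y)-A(y)\nabla\chi_T(y)$. First, $B_T$ does \emph{not} have zero mean for finite $T$: by (\ref{homo-coefficient}) one has $<B_T>=<A(\psi-\nabla\chi_T)>\ne 0$; this vanishes only as $T\to\infty$ and must be retained as a separate $O(<|\psi-\nabla\chi_T|>)$ contribution to the modulus $\eta$. Second, and decisively, your plan to produce the flux potential $\mathfrak{B}_T$ ``via the Hilbert-space approach sketched in Section 2'' cannot give what the energy estimate needs: the $B^2(\rd)=V^2_{\rm pot}\oplus V^2_{\rm sol}\oplus\mathbb{R}^{dm}$ framework yields only Besicovitch mean-square control, with no uniform local bounds on $\rd$, whereas the integration by parts requires $L^\infty$-type bounds on $\mathfrak{B}_T$ and its gradient over all cubes, quantified by $\Theta_\sigma(T)$. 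The paper's actual device (Theorem \ref{theorem-6.1} together with Lemmas \ref{lemma-6.1}--\ref{lemma-6.2}) is to solve the Helmholtz problem $-\Delta f + T^{-2}f = B_T - <B_T>$ on $\rd$, obtain $\|f\|_{L^\infty}\le CT^2\Theta_1(T)$, $\|\nabla f\|_{L^\infty}\le CT\Theta_\sigma(T)$ and the second-derivative bound $\|\nabla(\partial_i f_{ij}^{\alpha\beta})\|_{L^\infty}\le C\Theta_\sigma(T)$ by re-running the $\rho(R)$/uniform H\"older machinery of Section 6 on $f$ itself, and then replace $B_T$ by $-\Delta f+T^{-2}f+<B_T>$ and integrate by parts using the algebraic cancellation $\partial_i\partial_k\big\{(\partial_k f_{ij}-\partial_i f_{kj})(x/\varep)\,\partial_j u_0\big\}=0$ coming from antisymmetry in $(i,k)$. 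Replacing $-\Delta$ by the damped operator $-\Delta+T^{-2}$ is exactly what makes the potential well-defined and \emph{quantitatively} sublinear; a straight Poisson/Hilbert-space construction, as you propose, has no such control. Without this step, or an equivalently quantitative construction, the ``balancing'' you defer to cannot be carried out, and the proof does not close.
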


The next theorem gives more precise rates of convergence, provided 
$\rho(R)$ decays fast enough so that $\int_1^\infty  \frac{\rho(r)}{r}\, dr<\infty$.

\begin{theorem}\label{main-theorem-2}
Under the same assumptions as in Theorem \ref{main-theorem-2}, the following estimates hold:
\begin{equation}\label{1.2-0}
\| u_\varep -u_0\|_{L^2 (\Omega)}
\le C\, \|u_0\|_{W^{2, p}(\Omega)} \left\{
\int^\infty_{\frac{1}{2\varep}} \frac{\Theta_\sigma (r) }{r}\, dr
+ \big[ \Theta_1 (\varep^{-1})\big]^\sigma\right\},
\end{equation}
and
\begin{equation}\label{1.2-1}
\| u_\varep -u_0 -\varep \chi_T (x/\varep) \nabla u_0\|_{H^1(\Omega)}
\le C\, \| u_0 \|_{W^{2, p} (\Omega)} \left\{ \int^\infty_{\frac{1}{2\varep}} \frac{\Theta_\sigma (r)}{r}\, dr
+\big[ \Theta_1 (\varep^{-1})\big]^{\frac{\sigma}{2}}\right\}
\end{equation}
for any $\sigma \in (0,1)$, where $T=\varep^{-1}$ and
$C$ depends only on $\Omega$, $A$, $p$ and $\sigma$.
\end{theorem}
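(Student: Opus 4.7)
The natural strategy is a first-order two-scale expansion based on the approximate corrector $\chi_T$ at the critical scale $T=\varep^{-1}$, followed by an energy estimate and a duality argument. To compensate for the fact that $\nabla u_0$ lies only in $W^{1,p}$ and that $\chi_T(\cdot/\varep)$ does not vanish on $\partial\Omega$, I would introduce a convolution smoothing $S_\varep$ at scale $\varep$ together with a cutoff $\eta_\varep$ that vanishes in a boundary strip of width $\delta_\varep$, to be optimized later, and set
\begin{equation*}
w_\varep(x)=u_\varep(x)-u_0(x)-\varep\, \chi_{T,j}^\beta(x/\varep)\, S_\varep\bigl(\eta_\varep \partial_j u_0^\beta\bigr)(x).
\end{equation*}

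Next I would compute $\mathcal{L}_\varep w_\varep$ in divergence form and use \eqref{ap-c-0} to cancel the highly oscillating terms, leaving errors of four kinds: (a) a residual flux involving $\Phi_T(y):=A(y)+A(y)\nabla \chi_T(y)-\widehat{A}$ paired with $S_\varep(\eta_\varep\nabla u_0)$; (b) the lower-order piece $\varep\, T^{-2}\chi_T(x/\varep)\, S_\varep(\eta_\varep\nabla u_0)$ inherited from the $T^{-2}u$ term in \eqref{ap-c-0}, whose $L^2$ size is controlled by $\varep\cdot T^{-1}\|\chi_T\|_{L^\infty}\ls \varep\,\Theta_\sigma(T)$ via \eqref{ap-estimate-0}; (c) commutators arising from $S_\varep$; and (d) a boundary-layer error supported in the transition strip of $\eta_\varep$. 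The main obstacle is handling (a): in the periodic case one writes $\Phi_T=\operatorname{div}\mathfrak{B}_T$ with $\mathfrak{B}_T$ bounded and skew-symmetric, but in the almost-periodic setting I would have to construct an approximate flux corrector $\mathfrak{B}_T$ componentwise by solving suitable Poisson-type equations, and establish the parallel estimates $T^{-1}\|\mathfrak{B}_T\|_{L^\infty}\ls \Theta_\sigma(T)$ and $\bigl(\average_{B(x,r)}|\nabla \mathfrak{B}_T|^2\bigr)^{1/2}\ls (T/r)^\sigma$, using that the corrector equation and \eqref{ap-estimate-1} force small $L^2$-averages of $\Phi_T$ over balls of radius $\sim T$ with defect measured by $\Theta_\sigma$. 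A dyadic telescoping of this defect across scales from $\varep$ up to $1$ is exactly what generates the integral $\int^\infty_{1/(2\varep)}\Theta_\sigma(r)/r\, dr$ in the final estimate.

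With $\mathfrak{B}_T$ in hand, the standard energy estimate for $w_\varep$ yields
\begin{equation*}
\|\nabla w_\varep\|_{L^2(\Omega)}\ls \|u_0\|_{W^{2,p}(\Omega)}\Bigl(\int^\infty_{1/(2\varep)}\frac{\Theta_\sigma(r)}{r}\, dr + I_{\mathrm{bdry}}\Bigr),
\end{equation*}
in which the interior errors (a)--(c) accumulate to the integral and $I_{\mathrm{bdry}}$ is the boundary-layer contribution. Bounding the latter via Hardy's inequality applied in the $\delta_\varep$-strip and invoking \eqref{ap-estimate-0} produces a bound of the form $(\varep\|\chi_T(\cdot/\varep)\|_{L^\infty}/\delta_\varep)^{1/2}$ balanced against a competing $\delta_\varep$-growth term; optimizing $\delta_\varep$ yields the factor $[\Theta_1(\varep^{-1})]^{\sigma/2}$. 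Adding back the piece $\varep\chi_T(x/\varep)\bigl(\nabla u_0-S_\varep(\eta_\varep\nabla u_0)\bigr)$ that was removed in the definition of $w_\varep$, whose $H^1(\Omega)$ norm is absorbed into the same two quantities via \eqref{ap-estimate-0} and \eqref{ap-estimate-1}, delivers \eqref{1.2-1}.

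For the $L^2$ bound \eqref{1.2-0}, I would turn to duality: for $\phi\in L^2(\Omega)$, let $v_\varep$ solve $\mathcal{L}_\varep^*v_\varep=\phi$ in $\Omega$, $v_\varep=0$ on $\partial\Omega$, and let $v_0$ be the solution of the corresponding homogenized adjoint problem, which lies in $W^{2,p}(\Omega)$ by elliptic regularity on the $C^{1,\alpha}$ domain. The identity
\begin{equation*}
\int_\Omega(u_\varep-u_0)\phi=\int_\Omega A(x/\varep)\nabla(u_\varep-u_0)\cdot \nabla v_\varep+\int_\Omega\bigl(A(x/\varep)-\widehat{A}\bigr)\nabla u_0\cdot \nabla v_\varep,
\end{equation*}
coupled with the two-scale expansions of both $u_\varep$ and $v_\varep$, reduces the right-hand side to inner products of two $w$-type remainders against one another and against smooth gradients. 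Cauchy--Schwarz squares the boundary-layer factor to $[\Theta_1(\varep^{-1})]^\sigma$, while the interior integral appears only to the first power (the remainder being paired with $\nabla v_0$, bounded in $L^2$); taking $\phi=u_\varep-u_0$ produces \eqref{1.2-0}.
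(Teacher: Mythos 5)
Your route to \eqref{1.2-1} --- smoothing $\nabla u_0$, cutting off in a boundary strip, and constructing an approximate flux corrector $\mathfrak{B}_T$ with $T^{-1}\|\mathfrak{B}_T\|_{L^\infty}\lesssim\Theta_\sigma(T)$ --- is a reasonable alternative to the paper's. The paper does not smooth $\nabla u_0$ or cut off; instead it subtracts a boundary corrector $v_\varep$ solving $\mathcal{L}_\varep v_\varep=0$ in $\Omega$, $v_\varep=\varep\chi_T(x/\varep)\nabla u_0$ on $\partial\Omega$, and estimates $\|v_\varep\|_{H^1}$ via the trace inequality and \eqref{5.2-1} (Lemma \ref{lemma-6.4}). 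Your flux-corrector idea is exactly what the paper carries out: it solves $-\Delta f+T^{-2}f=B_T-\langle B_T\rangle$, extracts the skew part of $\nabla f$, and Lemma \ref{lemma-6.2} delivers the $L^\infty$ bounds you postulate. Two small points of precision: the telescoping that produces $\int_{1/(2\varep)}^\infty \Theta_\sigma(r)\,dr/r$ is Theorem \ref{theorem-5.6}, applied to $\nabla\chi_{2^\ell T}-\nabla\chi_{2^{\ell+1}T}$ over the scales $T,2T,4T,\dots$ (not ``from $\varep$ up to $1$''), and what it controls is $\langle|\psi-\nabla\chi_T|\rangle$, which enters through $|\langle B_T\rangle|\lesssim\langle|\psi-\nabla\chi_T|\rangle$.

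For \eqref{1.2-0} there is a genuine error. Since $v_\varep\in H^1_0(\Omega)$ and $\mathcal{L}_\varep u_\varep=\mathcal{L}_0 u_0=F$, one has
\begin{equation*}
\int_\Omega A(x/\varep)\nabla(u_\varep-u_0)\cdot\nabla v_\varep
=\int_\Omega\bigl(\widehat{A}-A(x/\varep)\bigr)\nabla u_0\cdot\nabla v_\varep,
\end{equation*}
so the two terms on the right-hand side of your claimed identity cancel exactly and the identity would force $\int_\Omega(u_\varep-u_0)\phi=0$. The correct version is $\int_\Omega(u_\varep-u_0)\phi=\int_\Omega(\widehat{A}-A(x/\varep))\nabla u_0\cdot\nabla v_\varep$. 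Even after this is repaired, a careful duality argument (expanding both $u_\varep$ and the adjoint solution, and pairing the flux-corrector term against $\nabla v_0$) is considerably more involved than what the paper actually does: it simply bounds $\|v_\varep\|_{L^2(\Omega)}\le C\|v_\varep\|_{L^\infty(\Omega)}\le C\|\varep\chi_T(\cdot/\varep)\nabla u_0\|_{C^{\sigma_1}(\partial\Omega)}$ using the uniform H\"older estimate of Theorem \ref{main-theorem-3}, then interpolates the $L^\infty$ and $C^{0,\sigma_1}$ bounds for $\chi_T$ to produce $[\Theta_1(T)]^{\sigma(1-\delta)}$. This shortcut is precisely why Theorem \ref{main-theorem-3} is established before the convergence rates; your proposal misses it.
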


\begin{remark}\label{remark-1.1}
{\rm
By taking $R=\sqrt{T}$ in (\ref{Theta}), we obtain $\Theta_\sigma (T)
\le \rho (\sqrt{T} )+T^{-\frac{\sigma}{2}}$ for $T\ge 1$.
It follows that
\begin{equation}\label{additional}
\text{ if }
\int_1^\infty \frac{\rho (r)}{r}\, dr<\infty, \text{ then } 
\int_1^\infty \frac{\Theta_\sigma (r)}{r}\, dr<\infty
\end{equation}
for any $\sigma \in (0,1]$.
It is not clear whether estimates (\ref{1.2-0}) and (\ref{1.2-1}) are sharp.
However, let us suppose that there exist $\tau>0$ and $C>0$ such that
\begin{equation}\label{1.3-0}
\rho (R) \le C \, R^{-\tau} \quad \text{ for all } R\ge 1.
\end{equation}
Then, for $T\ge 1$,
$$
\Theta_\sigma  (T) \le C\, T^{-\frac{\sigma \tau}{\sigma +\tau}}.
$$
It follows from (\ref{1.2-0}) that
$$
\| u_\varep -u_0\|_{L^2 (\Omega)} \le C \, \varep^{\frac{\sigma\tau}{\sigma +\tau}} \, \| u_0\|_{W^{2, p} (\Omega)}.
$$
Since $\sigma \in (0,1)$ is arbitrary, this gives
\begin{equation}\label{1.3-3}
\| u_\varep -u_0\|_{L^2 (\Omega)} \le C_\gamma \, \varep^\gamma \, \| u_0\|_{W^{2, p} (\Omega)}
\quad \text{ for any } 0<\gamma<\frac{\tau}{\tau +1}.
\end{equation}
Similarly, one may deduce from (\ref{1.2-1}) that
\begin{equation}\label{1.3-4}
\| u_\varep -u_0 -\varep \chi_T (x/\varep) \nabla u_0\|_{H^1(\Omega)}
\le C_\gamma \, \varep^\gamma \,\| u_0\|_{W^{2, p} (\Omega)}
\end{equation}
for any $0<\gamma <\frac{\tau}{2 (\tau +1)}$.
It is interesting to point out that if $A$ is periodic, then $\rho(R)=0$ for $R$ large and thus
the condition (\ref{1.3-0}) holds for any $\tau>1$.
Consequently, estimates (\ref{1.3-3}) and (\ref{1.3-4}) yield convergence rates $O(\varep^{1-\delta})$ and
$O(\varep^{\frac12-\delta})$ for any $\delta>0$ in $L^2(\Omega)$ and $H^1(\Omega)$ respectively, 
which are near optimal.
Also note that under the condition (\ref{1.3-0}), our estimate (\ref{ap-estimate-0}) gives
\begin{equation}\label{1.3-5}
 \| \chi_T\|_{L^\infty} \le C_\delta\, T^{\frac{1}{\tau +1} +\delta}
\end{equation}
for any $\delta>0$, while one has $\|\chi_T\|_{L^\infty}\le C$, if $A$ is periodic.
Section 8 contains some examples of quasi-periodic functions for which the condition (\ref{1.3-0})
is satisfied.
}
\end{remark}

In this paper we also establish the uniform H\"older estimates for the Dirichlet problem (\ref{DP}).

\begin{theorem}\label{main-theorem-3}
Suppose that $A(y)=\big(a_{ij}^{\alpha\beta} (y)\big)$satisfies the ellipticity condition (\ref{ellipticity})
and is uniformly almost-periodic in $\rd$.
Let  $\Omega$ be a bounded $C^{1, \alpha}$ domain in $\rd$ for some $\alpha>0$.
Let $u_\varep $ be a weak solution of
\begin{equation}\label{1.10-0}
\mathcal{L}_\varep (u_\varep)=F +\text{\rm div} (f) \quad \text{ in } \Omega \quad 
\text{ and } \quad u_\varep =g \quad \text{ on } \partial\Omega.
\end{equation}
Then, for any $\sigma\in (0,1)$,
\begin{equation}\label{1.10-1}
\aligned
\| u_\varep\|_{C^\sigma (\overline{\Omega})}
\le C \bigg\{ \| g\|_{C^\sigma (\partial\Omega)}
&+\sup_{\substack{ x\in \Omega \\ 0<r<r_0}} r^{2-\sigma} 
\average_{B(x,r)\cap\Omega} |F|\\
& +\sup_{\substack{x\in \Omega\\ 0<r<r_0}}
r^{1-\sigma} \left(\average_{B(x,r)\cap\Omega}
|f|^2\right)^{1/2} \bigg\},
\endaligned
\end{equation}
where $r_0=\text{\rm diam} (\Omega)$ and $C$ depends only on $\sigma$, $A$ and $\Omega$.
\end{theorem}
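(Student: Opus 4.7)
My plan is to follow the Avellaneda--Lin compactness philosophy, but to replace the explicit $O(\varepsilon)$-rate of the periodic case by the qualitative convergence rate of Theorem \ref{main-theorem-1} (a modulus $\eta(\varepsilon)\to 0$). The estimate (\ref{1.10-1}) is equivalent to a Campanato characterization of $C^\sigma(\overline\Omega)$, so the core task is a uniform Campanato bound on all balls centered in $\overline\Omega$, uniform in $\varepsilon$.

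First I would reduce to a homogeneous equation. Let $w_F$ solve $\mathcal L_0 w_F = F$ with zero boundary data and $w_f$ solve $\mathcal L_0 w_f=\operatorname{div}(f)$; by classical constant-coefficient Schauder/Campanato theory, both have $C^\sigma$-norms controlled by the corresponding sup terms in (\ref{1.10-1}). Subtracting these off, and then an $\widehat A$-harmonic extension of $g$, one reduces to proving the estimate for $\mathcal L_\varepsilon u_\varepsilon = 0$ in $\Omega$ with $C^\sigma$ boundary data.

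Next I would prove the key excess-decay lemma: for each $\sigma\in(0,1)$ there exist $\theta\in(0,1/4)$ and $\varepsilon_1>0$ such that if $\varepsilon\le \varepsilon_1 r$ and $\mathcal L_\varepsilon u_\varepsilon = 0$ in $B(x_0,2r)\subset\Omega$, then
$$
\inf_{c\in\mathbb R^m}\average_{B(x_0,\theta r)}|u_\varepsilon - c|^2
\le \theta^{2\sigma}\inf_{c\in\mathbb R^m}\average_{B(x_0,r)}|u_\varepsilon - c|^2.
$$
To prove it, let $w$ solve $\mathcal L_0 w=0$ in $B(x_0,r)$ with $w=u_\varepsilon$ on $\partial B(x_0,r)$. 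After rescaling by $r$, Theorem \ref{main-theorem-1} yields $\|u_\varepsilon-w\|_{L^2(B(x_0,r))}\ls \eta(\varepsilon/r)\,\|u_\varepsilon-\bar c\|_{L^2(B(x_0,r))}$, while the constant-coefficient $C^{1,\gamma}$ estimate for $w$ produces an $O(\theta)$ Taylor gain at $x_0$. Choosing $\theta$ first (from $\sigma$ and the Schauder constant) and then $\varepsilon_1$ so small that $\eta(\varepsilon_1)\ll\theta^{\sigma+1}$ closes the step. Iterating on radii $r,\theta r,\theta^2 r,\dots$ down to scale $\sim\varepsilon$ yields the Campanato bound at all interior scales $\rho\ge C\varepsilon$. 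At scales $\rho<C\varepsilon$, rescale $v(y)=u_\varepsilon(x_0+\varepsilon y)$; then $\mathcal L_1 v=0$ with uniformly continuous (hence VMO) coefficients, and the classical interior Hölder estimate for elliptic systems with VMO coefficients supplies the Campanato condition on small balls.

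The boundary case is parallel: flatten $\partial\Omega$ using its $C^{1,\alpha}$ parametrization, run the same excess-decay argument in half-balls at a boundary point (comparing with $\mathcal L_0 w=0$ in the half-ball with the lifted Hölder Dirichlet data, for which boundary $C^{1,\gamma}$ Schauder gives the $\theta$-gain), and, at sub-$\varepsilon$ scales, use boundary Hölder estimates for $\mathcal L_1$. A standard gluing of the interior and boundary Campanato bounds, via the equivalence of the Campanato seminorm with the $C^\sigma$ seminorm on $C^{1,\alpha}$ domains, then yields (\ref{1.10-1}). The principal obstacle is the purely qualitative nature of the modulus $\eta$: the parameters must be ordered in the correct way ($\sigma\mapsto\theta\mapsto\varepsilon_1$), and the iteration must start at a radius large enough for $\eta(\varepsilon/r)$ to beat $\theta^\sigma$; tracking this through the $C^{1,\alpha}$ boundary chart, and verifying that the sub-$\varepsilon$ regime can be patched in without losing uniformity, is the delicate technical piece.
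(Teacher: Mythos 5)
Your overall philosophy (Avellaneda--Lin compactness: an excess-decay lemma at scale $\theta r$ for $\varepsilon\ll r$, iterated down to scale $\sim\varepsilon$, then the classical continuous-coefficient local Hölder theory below that scale) is indeed the engine behind the paper's homogeneous Hölder estimates (Theorems \ref{holder-theorem} and \ref{boundary-holder-theorem}). But your proposal has two genuine gaps.

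First, there is a logical circularity. You propose to prove the excess-decay step by invoking Theorem \ref{main-theorem-1}, i.e.\ the quantitative modulus $\eta(\varepsilon)$ for $\|u_\varepsilon-u_0\|$. In the paper, Theorem \ref{main-theorem-3} is proved in Section 4 and Theorem \ref{main-theorem-1} in Section 7; the proof of Theorem \ref{main-theorem-1} explicitly uses Theorem \ref{main-theorem-3} (for the final $C^\sigma$ estimate), and the corrector estimates feeding into it use the Section 3 Hölder theory. So you cannot use Theorem \ref{main-theorem-1} here without establishing it first by some independent route, which you have not done. The paper's excess-decay lemma (Lemma \ref{step-1}) avoids any quantitative rate entirely: it is a compactness/contradiction argument that needs only qualitative homogenization (Theorem \ref{homo-theorem}), strengthened to Theorem \ref{compactness-theorem} so that translated/rotated centers can be handled with constants independent of the ball. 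Your proposal silently skips this latter point as well: a fixed-center excess-decay lemma does not automatically propagate to all centers, since translating the domain changes the coefficient matrix to $A(\cdot+z)$ and the compactness must be uniform over this family; that is precisely the role of $\mathcal{A}$ and Theorem \ref{compactness-theorem}.

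Second, your reduction to the homogeneous equation does not go through. Subtracting the constant-coefficient solutions $w_F, w_f$ of $\mathcal{L}_0 w_F = F$, $\mathcal{L}_0 w_f = \mathrm{div}(f)$ (with zero boundary values) leaves a remainder $\tilde u = u_\varepsilon - w_F - w_f$ satisfying
$$
\mathcal{L}_\varepsilon (\tilde u) = \mathrm{div}\bigl((\widehat{A}-A(x/\varepsilon))\nabla(w_F+w_f)\bigr),
$$
which is still an inhomogeneous $\varepsilon$-equation with a divergence-form source. So you have not reduced to $\mathcal{L}_\varepsilon u_\varepsilon = 0$; you have merely traded $(F,f)$ for a new $f$. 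Handling $\mathrm{div}(f)$ sources is exactly the content of Theorem \ref{b-h-lemma-4}, and the paper treats it (and the $F$ and $g$ terms, Theorems \ref{b-h-lemma-3} and \ref{b-h-theorem-10}) directly via the Green function $G_\varepsilon$ of $\mathcal{L}_\varepsilon$ and the estimates (\ref{Green-size})--(\ref{Green-boundary}), which are themselves consequences of the homogeneous Hölder estimates. To make your plan work you would need either a genuine perturbative iteration closing on the Campanato quantity (in the spirit of a freezing-coefficients Schauder argument), or, as in the paper, a Green-function representation. As written, the reduction step is incomplete.
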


We now describe the outline of this paper as well as
some of key ideas used in the proof of its main results.
In Section 2 we give a brief review of the homogenization of second-order elliptic systems
with almost-periodic coefficients, based on an auxiliary equation in $B^2(\rd)$, the
Bezikovich space of almost-periodic functions.
We also prove a homogenization theorem  (Theorem \ref{compactness-theorem}) for a sequence of operators 
$\{ -\text{\rm div} \big( B_\ell (x/\varep_\ell)\nabla \big)\}$,
where $\varep_\ell \to 0$ and $\{ B_\ell (y)\}$ are obtained from $A(y)$ through rotations and
translations. With this theorem a compactness argument
 is used in Sections 3 and 4 to establish the uniform interior and boundary
H\"older estimates for local solutions of $\mathcal{L}_\varep (u_\varep)=F +\text{\rm div} (f)$.
The proof of Theorem \ref{main-theorem-3} is given in Section4.
 We mention that the compactness argument, 
 which originated from the regularity theory in the calculus of variations and minimal surfaces,
was introduced to the study of homogenization problems  
by M. Avellaneda and F. Lin \cite{AL-1987, AL-1989-II}.
It was used recently  in \cite{KLS1} to establish the Lipschitz estimates for the  Neumann problem
in periodic homogenization. Also see related work in \cite{shen-2008, song-2012, shen-2014}.
In the almost-periodic setting the compactness argument was used in \cite{Dungey-2001}
to obtain the interior H\"older estimate  for operators with complex coefficients.
However, we point out that some version of Theorem \ref{compactness-theorem}
seems to be necessary to ensure that the constants are independent of the centers of balls.

The approximate correctors $\chi_T$ are constructed in Section 5, while
estimates (\ref{ap-estimate-0}), (\ref{ap-estimate-0-0}) and (\ref{ap-estimate-1}) are established in Section 6.
The proof of (\ref{ap-estimate-0-0}) and (\ref{ap-estimate-1})
relies on  the uniform H\"older estimates for $\mathcal{L}_\varep$.
We will also show that
\begin{equation}\label{90}
| \chi_T (x) -\chi_T ( y) |\le C\, T \, \| A(\cdot+ x) -A(\cdot +y)\|_{L^\infty } \quad \text{ for any } x, y\in \rd.
\end{equation}
The estimate (\ref{ap-estimate-0}) follows from (\ref{90}) and (\ref{ap-estimate-0-0})
in a manner somewhat similar to the case of Hamilton-Jacobi equations
in the almost-periodic setting \cite{Ishii-2000, Lions-2005, Armstrong-2014}.

Theorems \ref{main-theorem-1} and \ref{main-theorem-2} are proved in Section 7.
Here we follow an approach for the periodic case by considering 
$$
w_\varep =u_\varep  (x) - u_0 (x) -\varep \chi_T (x/\varep) \nabla u_0 (x) +v_\varep (x),
$$
where $T=\varep^{-1}$ and $v_\varep$ is the weak solution of
$\mathcal{L}_\varep (v_\varep)=0$ in $\Omega$ and
$v_\varep =\varep \chi_T (x/\varep)\nabla u_0 (x)$ on $\partial\Omega$.
We are able to show that
\begin{equation}\label{100}
\| w_\varep\|_{H^1(\Omega)}
\le C_\sigma \, \Big\{ \Theta_\sigma (T) +\langle |\psi -\nabla \chi_T |\rangle \Big\} \| u_0\|_{W^{2,2} (\Omega)}
\end{equation}
for any $\sigma \in (0,1)$, where $\psi$ is the limit of $\nabla\chi_T$ in $B^2(\rd)$ as $T\to \infty$.
In the periodic case one of the key steps is to write $\widehat{A}- A(y) -A(y)\nabla \chi (y)$
as a divergence of some bounded periodic function.
In the almost-periodic setting this will be replaced by solving the equation 
\begin{equation}\label{101}
-\Delta u+ T^{-2} u =B_T -\langle B_T\rangle \quad \text{ in }\rd,
\end{equation}
where $B_T (y)=\widehat{A} -A (y) -A (y) \nabla \chi_T (y)$.
The same ideas for proving (\ref{ap-estimate-0})-(\ref{ap-estimate-1}) are used
to obtain the desired estimates for $\| u\|_{L^\infty}$ and $\|\nabla u\|_{L^\infty}$
in terms of the function $\Theta_\sigma (T)$.
Finally, in Section 8 we consider the case of quasi-periodic coefficients
and provide some sufficient conditions on the frequencies of $A(y)$
for the estimate (\ref{1.3-0}) on $\rho(R)$.

Throughout this paper, unless indicated otherwise, we always assume that 
$A=\big(a_{ij}^{\alpha\beta}\big)$ satisfies the ellipticity condition
(\ref{ellipticity}) and is uniformly almost-periodic in $\rd$.
We will use $\average_E f=\frac{1}{|E|} \int_E f$ to denote the $L^1$ average of $f$
over $E$, and  $C$ to denote constants that depend on $A(y)$, $\Omega$ and other
relevant parameters, but never on $\varep$ or $T$.


\section{Homogenization and compactness}
\setcounter{equation}{0}

This section contains a brief review of homogenization theory of elliptic systems 
with almost-periodic coefficients. We refer the reader to \cite[pp.238-242]{Jikov-1994} for a detailed presentation.
We also prove a homogenization theorem for a sequence of operators obtained from 
$\mathcal{L}_\varep$ through translations and rotations.

Let $\text{\rm Trig}(\rd)$ denote the set of (real) trigonometric polynomials in $\rd$.
A bounded continuous function $f$ in $\rd$ is said to be uniformly almost-periodic  
(or almost-periodic in the sense of Bohr),
if $f$ is a limit of  a sequence of functions in $\text{\rm Trig} (\rd)$ with respect to the norm
$\| f\|_{L^\infty}$. A function $f$ in $L^2_\loc(\rd)$ is said to belong to $B^2(\rd)$
 if $f$ is a limit of a sequence of
functions in $\text{\rm Trig}(\rd)$ with respect to the semi-norm
\begin{equation}\label{B-2-norm}
\| f\|_{B^2} =\limsup_{R\to\infty}
\left\{\average_{B(0,R)}  |f|^2\right\}^{1/2}.
\end{equation}
Functions in $B^2(\rd)$ are said to be almost-periodic in the sense of Bezikovich.
It is not hard to see that if $f\in B^2(\rd)$ and $g$ is uniformly almost-periodic,
then $fg\in B^2(\rd)$. 

Let $f\in L^1_\loc(\rd)$.
A number $\langle f \rangle $ is called the mean value of $f$ if
\begin{equation}\label{mean-value}
\lim_{\varep\to 0^+} \int_{\rd} f(x/\varep)\varphi (x)\, dx  = \langle f \rangle \int_{\rd} \varphi
\end{equation}
for any $\varphi\in C_0^\infty(\rd)$.
If $f\in L^2_{\loc}(\rd)$ and $\| f\|_{B^2}<\infty$, the existence of $\langle f\rangle $ is equivalent to the
condition that as $\varep\to 0$, $f(x/\varep)\rightharpoonup \langle f\rangle $ weakly in $L^2_{\loc} (\rd)$, i.e.
$f(x/\varep)\rightharpoonup \langle f \rangle $ weakly in $L^2(B(0,R))$ for any $R>1$.
In this case one has
$$
\langle f \rangle =\lim_{L\to \infty} \average_{B(0, L)} f.
$$
It is known that if $f, g\in B^2(\rd)$, then $fg$ has the mean value. Furthermore, under the equivalent
relation that $f\sim g$ if $\| f-g\|_{B^2}=0$,
 the set $B^2(\rd)/\sim$ is a Hilbert space with the inner product defined by
$(f,g)=\langle fg \rangle $.

A function $f=(f_i^\alpha)$ in $\text{\rm Trig} (\rd; \mathbb{R}^{d\times m})$ is called potential if
there exists $g=(g^\alpha)\in \text{\rm Trig} (\rd; \mathbb{R}^{m})$ such that
$f_i^\alpha ={\partial g^\alpha}/{\partial x_i}$.
A function $f=(f_i^\alpha)$ in $\text{\rm Trig} (\rd; \mathbb{R}^{d\times m})$ is called solenoidal
if $\partial f_i^\alpha/\partial x_i=0$ for $1\le \alpha\le m$.
Let $V^2_{\text{\rm pot}}$ (resp. $V^2_{\rm sol}$) denote the closure of potential  (resp. solenoidal)
trigonometric polynomials with mean value zero
in $B^2(\rd; \mathbb{R}^{d\times m})$.
Then
\begin{equation}\label{decomp}
B^2(\rd; \mathbb{R}^{d\times m}) =V^2_{\rm pot} \oplus V^2_{\rm sol}\oplus \mathbb{R}^{d\times m}.
\end{equation}
By the Lax-Milgram Theorem and the ellipticity condition (\ref{ellipticity}), for any $1\le j\le d$
and $1\le \beta\le m$, there exists a unique $\psi_j^\beta =(\psi_{ij}^{\alpha\beta})
\in V_{\rm pot}^2$ such that
\begin{equation}\label{c-equation}
\langle a_{ik}^{\alpha\gamma}\,  \psi_{kj}^{\gamma\beta} \, \phi_i^\alpha\rangle 
=-\langle a_{ij}^{\alpha\beta} \, \phi_i^\alpha\rangle 
\quad \text{ for any } \phi=(\phi_i^\alpha)\in V_{\rm pot}^2.
\end{equation}
Let
\begin{equation}\label{homo-coefficient}
\widehat{a}_{ij}^{\alpha\beta}
=\langle a_{ij}^{\alpha\beta}\rangle +\langle a_{ik}^{\alpha\gamma} \, \psi_{kj}^{\gamma\beta}\rangle .
\end{equation}
and $\widehat{A} =\big(\widehat{a}_{ij}^{\alpha\beta}\big)$.
Then
\begin{equation}\label{ellipticity-1}
\mu |\xi|^2 \le \hat{a}_{ij}^{\alpha\beta} \xi_i^\alpha \xi_j^\beta \le \mu_1 |\xi|^2
\end{equation}
for any $\xi =(\xi_i^\alpha)\in \mathbb{R}^{d\times m}$,
where $\mu_1$ depends only on $d$, $m$ and $\mu$.
It is also known  that $\widehat{A^*} =\big(\widehat{A}\big)^*$,
where $A^*$ denotes the adjoint of $A$, i.e., $A^*=\big(b_{ij}^{\alpha\beta}\big)$
with $b_{ij}^{\alpha\beta}=a_{ji}^{\beta\alpha}$.

As the following theorem shows,
the homogenized operator  for $\mathcal{L}_\varep$ is given by $\mathcal{L}_0
=-\text{\rm div} \big(\widehat{A} \nabla\big)$.

\begin{theorem}\label{homo-theorem}
Let $\Omega$ be a bounded Lipschitz domain in $\rd$ and
 $F\in H^{-1}(\Omega; \mathbb{R}^m)$.
 Let  $u_\varep\in H^1 (\Omega; \mathbb{R}^{m})$
be a weak solution of $\mathcal{L}_\varep (u_\varep) =F$ in $\Omega$.
Suppose that $u_\varep \rightharpoonup u_0$ weakly in $H^1(\Omega; \mathbb{R}^m)$.
Then $A(x/\varep)\nabla u_\varep \rightharpoonup \widehat{A}\nabla u_0$ weakly in $L^2(\Omega; \mathbb{R}^{dm})$.
Consequently, if $f\in H^{1/2}(\partial\Omega; \mathbb{R}^m)$ and
 $u_\varep$ is the unique weak solution in $H^1(\Omega;\mathbb{R}^m)$ of 
the Dirichlet problem: \, $\mathcal{L}_\varep (u_\varep) =F$ in $\Omega$ and
$u_\varep =f$ on $\partial\Omega$, 
then, as $\varep\to 0$, $u_\varep \to u_0$ weakly in $H^1(\Omega;\mathbb{R}^m)$ and
strongly in $L^2(\Omega; \mathbb{R}^m)$, where $u_0$ is the unique weak solution
in $H^1(\Omega;\mathbb{R}^m)$ of  the Dirichlet problem:\ 
$\mathcal{L}_0 (u_0)=F$ in $\Omega$ and $u_0=f$ on $\partial\Omega$.
\end{theorem}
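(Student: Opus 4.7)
The plan is to use the compensated-compactness (div-curl) approach of Tartar, combined with trigonometric-polynomial approximations of the Bezikovich correctors introduced in Section 2.

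First I fix $\xi=(\xi_j^\beta)\in\mathbb{R}^{dm}$ and let $\psi^*(\xi)\in V^2_{\text{\rm pot}}$ be the corresponding linear combination of adjoint correctors, i.e.\ the unique solution of the analogue of \eqref{c-equation} with $A$ replaced by $A^*$ and test datum $\xi$. By the very definition of $V^2_{\text{\rm pot}}$ there exist mean-zero trigonometric polynomials $N^n$ on $\rd$ with $\nabla N^n\to\psi^*(\xi)$ in $B^2$. Set $v_\varepsilon^n(x)=\xi\cdot x+\varepsilon N^n(x/\varepsilon)$. For each fixed $n$, $v_\varepsilon^n\to\xi\cdot x$ uniformly on compact subsets of $\rd$, while $\nabla v_\varepsilon^n=\xi+(\nabla N^n)(x/\varepsilon)\rightharpoonup\xi$ weakly in $L^2(\Omega)$ by the mean-value characterization \eqref{mean-value}. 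Combining the cell-problem identity (the $A^*$ version of \eqref{c-equation}) with the definition \eqref{homo-coefficient}, the flux $q_\varepsilon^n(x):=A^*(x/\varepsilon)\nabla v_\varepsilon^n(x)-\widehat{A^*}\xi$ tends to zero weakly in $L^2(\Omega)$ and $\text{\rm div}\,q_\varepsilon^n\to 0$ in $H^{-1}_{\loc}(\Omega)$, in the iterated limit $\varepsilon\to 0$ followed by $n\to\infty$.

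Next I apply Tartar's duality argument. Let $p_0$ be any weak-$L^2$ limit of a subsequence of $A(x/\varepsilon)\nabla u_\varepsilon$; the goal is to identify $p_0=\widehat{A}\nabla u_0$. Using symmetry of the pairing under transposition of $A$, for every $\varphi\in C_0^\infty(\Omega)$,
\[
\int_\Omega A(x/\varepsilon)\nabla u_\varepsilon\cdot\nabla v_\varepsilon^n\,\varphi\,dx=\int_\Omega \nabla u_\varepsilon\cdot A^*(x/\varepsilon)\nabla v_\varepsilon^n\,\varphi\,dx.
\]
On the left, $A(x/\varepsilon)\nabla u_\varepsilon\rightharpoonup p_0$ has distributional divergence $-F\in H^{-1}(\Omega)$ while $\nabla v_\varepsilon^n\rightharpoonup\xi$ is curl-free, so the div-curl lemma permits passage to the limit. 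On the right, $\nabla u_\varepsilon\rightharpoonup\nabla u_0$ is curl-free, and $A^*(x/\varepsilon)\nabla v_\varepsilon^n\rightharpoonup\widehat{A^*}\xi$ has vanishing divergence in $H^{-1}_{\loc}$ by the preceding step, so div-curl applies again. Sending $\varepsilon\to 0$ and then $n\to\infty$ produces
\[
\int_\Omega p_0\cdot\xi\,\varphi\,dx=\int_\Omega \nabla u_0\cdot\widehat{A^*}\xi\,\varphi\,dx=\int_\Omega \widehat{A}\nabla u_0\cdot\xi\,\varphi\,dx.
\]
Since $\varphi$ and $\xi$ are arbitrary, $p_0=\widehat{A}\nabla u_0$ almost everywhere in $\Omega$, which identifies the weak limit uniquely and proves the first assertion. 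The Dirichlet statement then follows from the standard energy bound $\|u_\varepsilon\|_{H^1(\Omega)}\le C$, Rellich compactness giving strong $L^2$-convergence, and uniqueness for the constant-coefficient limit Dirichlet problem.

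The hard part is the absence of an exact corrector of controlled oscillation in the almost-periodic regime, which forces the $B^2$-approximation and the double limit $\varepsilon\to 0$ then $n\to\infty$. Verifying that $\text{\rm div}\,q_\varepsilon^n$ is small in $H^{-1}_{\loc}$ uniformly as both parameters tend to their limits requires combining the cell-problem orthogonality \eqref{c-equation} with \eqref{mean-value} and the decomposition \eqref{decomp}, so that the residual $A^*(\nabla N^n-\psi^*(\xi))$ has $B^2$-seminorm $o(1)$ as $n\to\infty$; after rescaling $y=x/\varepsilon$ this residual generates only an $L^2(\Omega)$-error vanishing in the same double limit, which is exactly what is needed to close the div-curl argument.
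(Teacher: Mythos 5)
Your argument is the standard Tartar oscillating-test-function/compensated-compactness proof in the Besicovitch framework, which is exactly the approach of the reference \cite{Jikov-1994} that the paper itself cites (the paper gives no self-contained proof beyond that citation), so the two are essentially the same. One small caveat: for fixed $n$ the sequence $\text{\rm div}\bigl(A^*(x/\varepsilon)\nabla v_\varepsilon^n\bigr)$ is only bounded, not precompact, in $H^{-1}_{\loc}$, so the div-curl lemma does not literally ``apply again'' on the right-hand side; one should instead either observe that the right-hand side equals the left-hand side (whose limit div-curl does give) and then compute $\lim_\varepsilon\int\nabla u_\varepsilon\cdot A^*(x/\varepsilon)\nabla v_\varepsilon^n\,\varphi$ directly up to an error $O(\|R^n\|_{B^2})$ that vanishes as $n\to\infty$, or pass to a diagonal sequence $n=n(\varepsilon)\to\infty$ along which the divergence does become precompact. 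With that repair the proof is complete.
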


\begin{proof} See \cite{Jikov-1994} for the single equation case $(m=1)$.
The proof for the case $m>1$ is exactly the same.
\end{proof}

In Sections 3 and 4 we will use a compactness argument to establish the uniform  H\"older estimates
for local solutions of $\mathcal{L}_\varep (u_\varep) =\text{\rm div} (f) +F$.
This requires us to work with
 a class of operators that are obtained from $\mathcal{L}^A=-\text{div}
\big(A (x)\nabla \big)$
through translations and rotations of coordinates in $\rd$. Observe that if $ \mathcal{L}^A (u)=F$ and
$ x=Oy+z$ for some rotation $O=(O_{ij})$ and $z\in \rd$, then $\mathcal{L}^B (v)=G$, 
where  $v(y)=u(Oy+z)$, $B =(b_{ij}^{\alpha\beta} (y))$
with $b_{ij}^{\alpha\beta} (y) =a_{\ell k}^{\alpha\beta} (Oy +z) O_{\ell i} O_{kj}$,
and $G(y)= F(Oy +z)$.
Thus, for each $A=\big(a_{ij}^{\alpha\beta}\big)$ fixed, we shall consider the set of matrices,
\begin{equation}\label{class-A}
\aligned
\mathcal{A}
=\Big\{ B=\big( b_{ij}^{\alpha\beta} (y)): & \ b_{ij}^{\alpha\beta} (y) =a_{\ell k}^{\alpha\beta}(Oy+z) O_{\ell i} O_{k j}\\
&  \text{ for some rotation } O=(O_{ij})  \text{ and }  z\in \rd \Big\}.
\endaligned
\end{equation}
Note that if $B(y)= O^t A (Oy +z) O\in \mathcal{A}$, where $O^t$ denotes the transpose of $O$,
then 
the homogenized matrix $\widehat{B}
=O^t \widehat{A} O$.

The proof of Theorems \ref{holder-theorem} and \ref{boundary-holder-theorem}
 relies on the following extension of Theorem \ref{homo-theorem}.

\begin{theorem}\label{compactness-theorem}
Let $\Omega$ be a bounded Lipschitz domain in $\rd$ and $F\in H^{-1}(\Omega; \mathbb{R}^m)$.
Let $u_\ell \in H^1(\Omega; \mathbb{R}^{m})$ be a weak solution of
$-\text{\rm div} \big( A_\ell (x/\varep_\ell)\nabla u_\ell) =F$
in $\Omega$, where $\varep_\ell \to 0$ and $A_\ell \in \mathcal{A}$.
Suppose that $u_\ell \rightharpoonup u$ weakly in $H^1(\Omega; \mathbb{R}^m)$.
Then $u$ is a weak solution of $-\text{\rm div} \big( \widetilde{A}\nabla u\big) =F$
in $\Omega$, where $\widetilde{A}=O^t \widehat{A} O$
for some rotation $O$ in $\rd$.
\end{theorem}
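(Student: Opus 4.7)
My plan is to extract a uniformly convergent subsequence of the matrices $\{A_\ell\}$ on $\mathbb{R}^d$ and then reduce the problem to Theorem \ref{homo-theorem} applied to the uniform limit. Writing $A_\ell(y) = O_\ell^t A(O_\ell y + z_\ell) O_\ell$ for some rotations $O_\ell$ and translations $z_\ell \in \mathbb{R}^d$, I first pass to a subsequence with $O_\ell \to O$ using the compactness of $SO(d)$. I then invoke Bochner's characterization of uniform almost-periodicity---a bounded continuous function on $\mathbb{R}^d$ is uniformly almost-periodic if and only if the family of its translates is relatively compact in $L^\infty(\mathbb{R}^d)$---applied entrywise to $A$, to extract a further subsequence along which $A(\cdot+z_\ell) \to \tilde{A}$ uniformly on $\mathbb{R}^d$. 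Consequently $A_\ell(y) \to \tilde{B}(y) := O^t \tilde{A}(Oy) O$ uniformly on $\mathbb{R}^d$, and $\tilde{B}$ remains uniformly almost-periodic and satisfies the ellipticity bound (\ref{ellipticity}).

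Since $\nabla u_\ell$ is bounded in $L^2(\Omega;\mathbb{R}^{dm})$ and $\|A_\ell - \tilde{B}\|_{L^\infty(\mathbb{R}^d)} \to 0$, one has $A_\ell(x/\varep_\ell)\nabla u_\ell - \tilde{B}(x/\varep_\ell)\nabla u_\ell \to 0$ in $L^2(\Omega;\mathbb{R}^{dm})$, so $u_\ell$ weakly solves
\[
-\text{\rm div}\bigl(\tilde{B}(x/\varep_\ell)\nabla u_\ell\bigr) = F + \text{\rm div}(\eta_\ell), \qquad \eta_\ell \to 0 \text{ in } L^2(\Omega;\mathbb{R}^{dm}).
\]
A standard stability variant of Theorem \ref{homo-theorem}, applied to the fixed uniformly almost-periodic matrix $\tilde{B}$ and allowing the right-hand side to vary in $H^{-1}(\Omega;\mathbb{R}^m)$, then yields $\tilde{B}(x/\varep_\ell)\nabla u_\ell \rightharpoonup \widehat{\tilde{B}}\,\nabla u$ weakly in $L^2$, so that $u$ is a weak solution of $-\text{\rm div}(\widehat{\tilde{B}}\,\nabla u) = F$ in $\Omega$.

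It remains to identify $\widehat{\tilde{B}} = O^t \widehat{A} O$. By the rotation rule recorded just before the theorem, $\widehat{\tilde{B}} = O^t \widehat{\tilde{A}} O$, so the task reduces to verifying $\widehat{\tilde{A}} = \widehat{A}$, i.e.\ that $A \mapsto \widehat{A}$ is constant on the $L^\infty$-hull of $A$. Translation invariance $\widehat{A(\cdot+z)} = \widehat{A}$ is immediate from the translation invariance of $\langle\cdot\rangle$ and $V^2_{\rm pot}$ in (\ref{c-equation})--(\ref{homo-coefficient}); continuity of $A\mapsto \widehat{A}$ under uniform convergence follows from a Lax--Milgram/weak-compactness argument in $V^2_{\rm pot}$: ellipticity gives a uniform $B^2$-bound on the correctors $\psi^{A_n}$, a weak limit $\psi$ satisfies the corrector equation for the uniform limit since uniform convergence of $A_n$ legitimates the passage to the limit in each bilinear pairing $\langle a^n_{ik}\psi^n_{kj}\phi^\alpha_i\rangle$, and consequently $\widehat{A_n}\to \widehat{A}$.

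The principal obstacle will be this combination of Bochner precompactness of translates with the continuity of the corrector problem on the hull; neither fact is explicitly isolated in the excerpt, and together they are precisely what force the limiting homogenized matrix to be $O^t \widehat{A} O$ rather than some new matrix. The remaining pieces---compactness of $SO(d)$, the uniform-coefficient replacement in the equation, and the stability of Theorem \ref{homo-theorem} under $H^{-1}$-perturbations of the source---are essentially routine.
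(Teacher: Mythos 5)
Your proposal is correct and follows essentially the same approach as the paper: extract a subsequence with $O_\ell \to O$, use Bochner precompactness of the translates $\{A(\cdot+z_\ell)\}$ to get $A_\ell \to \widetilde{B}=O^t B(O\cdot)O$ uniformly on $\rd$, and then reduce to Theorem~\ref{homo-theorem} for the fixed almost-periodic matrix $\widetilde{B}$ (the paper does this by introducing an auxiliary Dirichlet solution $v_\ell$ for $-\mathrm{div}(\widetilde{B}(x/\varep_\ell)\nabla v_\ell)=F$ with $v_\ell=u_\ell$ on $\partial\Omega$ and showing $\|u_\ell-v_\ell\|_{H^1}\to 0$ by an energy estimate, which is exactly the concrete form of your invoked ``stability variant''). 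You are in fact slightly more complete than the paper on one point: it merely asserts $\widehat{\widetilde{B}}=O^t\widehat{B}O=O^t\widehat{A}O$ without comment, whereas you correctly isolate that the equality $\widehat{B}=\widehat{A}$ rests on translation-invariance of $A\mapsto\widehat{A}$ together with continuity of the corrector problem in $V^2_{\rm pot}$ under uniform convergence of coefficients, and your Lax--Milgram/weak-compactness sketch for that continuity is sound.
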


\begin{proof}
Suppose that $A_\ell (y)=O_\ell^t A(O_\ell y+z_\ell) O_\ell$ 
for some rotations $O_\ell$ and $z_\ell\in \rd$.
By passing to a subsequence we may assume that $O_\ell  \to O$, as $\ell\to \infty$.
Since $A(y)$ is uniformly almost-periodic, $\{ A(y+z_\ell)\}_{\ell =1}^\infty$
is pre-compact in $C_b (\rd)$, the set of bounded continuous functions in $\rd$.
Thus, by passing to a subsequence, we may also assume that
$A(y+z_\ell)$ converges uniformly in $\rd$ to an almost-periodic matrix $B(y)$.
Consequently, we obtain $A_\ell (y)\to \widetilde{B}(y)=O^t B(Oy)O$ uniformly in $\rd$.
Note that $\widehat{\widetilde{B}}=O^t \widehat{B}O=O^t \widehat{A} O$.

Now, let $v_\ell\in H^1(\Omega; \mathbb{R}^m)$ be the weak solution of the Dirichlet problem:
$$
-\text{\rm div} \big(\widetilde{B} (x/\varep_\ell)\nabla v_\ell \big)=F \quad \text{ in }
\Omega \quad \text{ and } \quad v_\ell =u_\ell \quad \text{  on } \partial\Omega.
$$
Using $-\text{div} \big( A_\ell (x/\varep_\ell)\nabla (u_\ell -v_\ell)\big)
=\text{\rm div} \big( (A_\ell (x/\varep_\ell)-\widetilde{B} (x/\varep_\ell)) 
\nabla v_\ell \big)
$ in $\Omega$ and $u_\ell -v_\ell =0$ on $\partial\Omega$,
we may use the energy estimates to deduce that
$$
\aligned
\| u_\ell -v_\ell\|_{H^1(\Omega)}
&\le C\, \| A_\ell -\widetilde{B}\|_{L^\infty} \|\nabla v_\ell\|_{L^2 (\Omega)}\\
&\le C\,  \| A_\ell -\widetilde{B}\|_{L^\infty} \Big\{ \| u_\ell \|_{H^1(\Omega)}
+\| F\|_{H^{-1} (\Omega)} \Big\}.
\endaligned
$$
It follows that $u_\ell-v_\ell \to 0$ in $H^1(\Omega; \mathbb{R}^m)$, as $\ell\to \infty$.

Finally, since $v_\ell =v_\ell-u_\ell +u_\ell \rightharpoonup u$ weakly in $H^1(\Omega; \mathbb{R}^m)$,
it follows from Theorem \ref{homo-theorem} that
$\widetilde{B} (x/\varep_\ell)\nabla v_\ell 
\rightharpoonup \widetilde{A} \nabla u$
weakly in $H^1(\Omega; \mathbb{R}^{d\times m})$, where $\widetilde{A}=
\widehat{\widetilde{B}}=O^t \widehat{A} O$.
As a result, we obtain $-\text{\rm div} \big( \widetilde{A}\nabla u\big) =F$
in $\Omega$. This completes the proof.
\end{proof}


\section{Uniform interior H\"older estimates}
\setcounter{equation}{0}

The goal of this  and next sections is to establish uniform  interior and boundary H\"older estimates for solutions 
of $\mathcal{L}_\varep (u_\varep) =f +\text{\rm div} (g)$.
We will first use a compactness method to deal with the special case $\mathcal{L}_\varep (u_\varep)=0$.
The results are then used to establish size and H\"older estimates for fundamental solutions
and Green functions for $\mathcal{L}_\varep$.
The general case follows from the estimates for fundamental solutions and Green functions.

\begin{theorem}\label{holder-theorem}
Let $u_\varep \in H^1 (B(x_0, 2r); \mathbb{R}^m)$ be a weak solution of
$\text{\rm div} \big( A(x/\varep)\nabla u_\varep \big)=0$ in $B(x_0, 2r)$, for
some $x_0\in \rd$ and $r>0$. Let $\sigma\in (0,1)$.
Then
\begin{equation}\label{holder-estimate}
|u_\varep (x) -u_\varep (y)|
\le C_\sigma \, \left( \frac{|x-y|}{r}\right)^\sigma \left\{ \average_{B(x_0, 2r)}
|u_\varep|^2\right\}^{1/2}
\end{equation}
for any $x,y\in B(x_0, r)$,
where $C_\sigma$ depends only on $d$, $m$, $\sigma$ and $A$ (not on $\varep$, $x_0$, $r$).
\end{theorem}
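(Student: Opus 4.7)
The plan is to use the Avellaneda--Lin compactness method, with Theorem \ref{compactness-theorem} serving as the basic compactness input. First, by translating and rescaling, set $v(y) = u_\varep(x_0 + ry)$; then $v$ solves $-\text{\rm div}(B(y/\tau)\nabla v) = 0$ on $B(0,2)$ with $\tau = \varep/r$ and $B(y) = A(y + x_0/\varep)$. Since pure translation of $A$ places $B$ in the class $\mathcal{A}$ (take $O = I$), it suffices to prove the following: for every $B\in \mathcal{A}$, every $\tau > 0$, and every weak solution $u \in H^1(B(0,1))$ of $-\text{\rm div}(B(y/\tau)\nabla u) = 0$ with $\average_{B(0,1)}|u|^2 \le 1$, one has $|u(x) - u(y)| \le C_\sigma |x-y|^\sigma$ on $B(0,1/2)$, with $C_\sigma$ independent of $B$, $\tau$.

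The heart of the argument is a one-step improvement lemma: there exist $\theta \in (0,1/4)$ and $\tau_0 > 0$, depending only on $d, m, \mu, \sigma$, such that if $0 < \tau \le \tau_0$, $B \in \mathcal{A}$, and $u$ is as above, then some constant $E \in \mathbb{R}^m$ satisfies $\average_{B(0,\theta)}|u - E|^2 \le \theta^{2\sigma}$. I would prove this by contradiction: failure would produce $\tau_\ell \to 0$, $B_\ell \in \mathcal{A}$, and solutions $u_\ell$ with $\average_{B(0,1)}|u_\ell|^2 \le 1$ but $\average_{B(0,\theta)}|u_\ell - E|^2 > \theta^{2\sigma}$ for every $E$. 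The Caccioppoli inequality gives a uniform $H^1$ bound on $B(0, 3/4)$, so a subsequence converges weakly in $H^1$ and strongly in $L^2$ to some limit $u$. Theorem \ref{compactness-theorem} then identifies $u$ as a weak solution of $-\text{\rm div}(\widetilde{A}\nabla u) = 0$ for some $\widetilde{A} = O^t \widehat{A} O$ satisfying the uniform ellipticity (\ref{ellipticity-1}). Standard interior regularity for constant-coefficient elliptic systems yields $\|\nabla u\|_{L^\infty(B(0,1/2))} \le C_0$ with $C_0$ depending only on $d, m, \mu$, hence $\average_{B(0,\theta)}|u - u(0)|^2 \le C_0^2 \theta^2$; choosing $\theta$ small enough that $C_0^2 \theta^2 < \theta^{2\sigma}/4$ (possible since $\sigma < 1$) contradicts the assumed failure via the strong $L^2$ convergence.

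The last step is iteration. Given the lemma, rescale $w(y) = \theta^{-\sigma}(u(\theta y) - E)$; then $w$ solves the same type of equation on $B(0,1)$ with $B$ unchanged and effective parameter $\tau/\theta$, and the normalization $\average_{B(0,1)}|w|^2 \le 1$ is preserved. Iterating as long as $\tau/\theta^k \le \tau_0$ produces constants $E_k$ with
\[
\average_{B(0,\theta^k)} |u - E_k|^2 \le \theta^{2k\sigma},
\]
which by Campanato's characterization yields (\ref{holder-estimate}) at all radii $\ge \tau/\tau_0$. For radii smaller than $\tau/\tau_0$, rescale by $\tau$ to obtain a solution of the fixed-coefficient equation $-\text{\rm div}(B(y)\nabla v) = 0$ with $B \in \mathcal{A}$ uniformly continuous (since almost-periodic functions are uniformly continuous on $\rd$, a property preserved by the rotation-translation operations defining $\mathcal{A}$); classical interior regularity for elliptic systems with uniformly continuous coefficients supplies the Hölder bound at this scale, and undoing the rescaling completes the proof.

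The principal obstacle is that the constants in the one-step lemma must be uniform over all translations and rotations of $A$, since the compactness argument is iterated at different base points $x_0$ and different scales. This uniformity is exactly what Theorem \ref{compactness-theorem} is built to deliver: the precompactness of $\{A(\cdot + z)\}_{z\in\rd}$ in $C_b(\rd)$, guaranteed by uniform almost-periodicity, allows one to extract a uniform limit and identify the limiting equation as uniformly elliptic regardless of the translations $z_\ell$ and rotations $O_\ell$ along the sequence.
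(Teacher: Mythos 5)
Your proposal follows the same Avellaneda--Lin three-step compactness scheme that the paper uses: a one-step improvement lemma proved by contradiction via Theorem~\ref{compactness-theorem}, iteration of the lemma (the paper's Lemma~\ref{step-2}), and Campanato's characterization together with classical interior H\"older estimates for continuous-coefficient systems at the sub-$\varepsilon$ scale. The reduction to $y=0$ by observing that translations of $A$ remain in $\mathcal{A}$, the use of Caccioppoli plus Rellich to extract an $L^2$-strong limit, and the identification of the limiting constant-coefficient system all match the paper's argument, so the proof is correct and essentially identical in structure.
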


Theorem \ref{holder-theorem} follows from Theorem \ref{compactness-theorem} by
a three-step compactness argument, similar to the periodic case in \cite{AL-1987}.

\begin{lemma}\label{step-1}
Let $0<\sigma<1$. Then there exist  constants $\varep_0>0$
and $\theta\in (0,1/4)$, depending only on $\sigma$ and $A$, such that
\begin{equation}\label{4.2-0}
\average_{B(y, \theta)} |u_\varep -\average_{B(y, \theta)} u_\varep|^2
\le \theta^{2\sigma} \qquad \text{ for any } 0<\varep<\varep_0,
\end{equation}
 whenever $u_\varep\in H^1(B(y, 1); \mathbb{R}^m)$ is
a weak solution of $\text{\rm div}\big(A(x/\varep)\nabla  u_\varep\big) =0$ in $B(y,1)$ for some $y\in \rd$, and
$$
\average_{B(y,1)} |u_\varep|^2\le 1.
$$
\end{lemma}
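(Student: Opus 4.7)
The plan is to argue by contradiction along the classical three-step compactness scheme of Avellaneda--Lin, but with Theorem \ref{compactness-theorem} playing the role of the ordinary homogenization theorem so that the argument is uniform in the center $y$.

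First, I would fix $\theta$ using constant-coefficient regularity. Every matrix of the form $\widetilde{A} = O^t \widehat{A} O$ (with $O$ orthogonal) satisfies the ellipticity bounds (\ref{ellipticity-1}) with the same constants $(\mu, \mu_1)$. Standard interior estimates for constant-coefficient elliptic systems therefore yield a universal constant $C_0 = C_0(d,m,\mu)$ such that any weak solution $v \in H^1(B(0,1);\mathbb{R}^m)$ of $-\text{\rm div}(\widetilde{A}\nabla v) = 0$ with $\average_{B(0,1)}|v|^2 \le 1$ obeys
\begin{equation*}
\average_{B(0,\theta)} \bigl| v - \average_{B(0,\theta)} v \bigr|^2 \le C_0\, \theta^2
\end{equation*}
for every $\theta \in (0, 1/2)$. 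Since $\sigma<1$, I pick $\theta \in (0,1/4)$ so small that $C_0 \theta^{2(1-\sigma)} \le 1/2$, i.e.\ $C_0 \theta^2 \le \tfrac{1}{2}\theta^{2\sigma}$.

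Second, I suppose the conclusion fails for this $\theta$, producing sequences $\varep_\ell \to 0$, centers $y_\ell \in \rd$, and solutions $u_\ell$ of $-\text{\rm div}(A(x/\varep_\ell)\nabla u_\ell)=0$ on $B(y_\ell,1)$ with $\average_{B(y_\ell,1)}|u_\ell|^2 \le 1$ yet
\begin{equation*}
\average_{B(y_\ell,\theta)} \bigl| u_\ell - \average_{B(y_\ell,\theta)} u_\ell \bigr|^2 > \theta^{2\sigma}.
\end{equation*}
Translate: set $v_\ell(x) = u_\ell(x + y_\ell)$ and $A_\ell(y) = A(y + y_\ell/\varep_\ell)$. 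Then $A_\ell \in \mathcal{A}$ (identity rotation, translation $z_\ell = y_\ell/\varep_\ell$), and $v_\ell$ weakly solves $-\text{\rm div}(A_\ell(x/\varep_\ell)\nabla v_\ell)=0$ in $B(0,1)$.

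Third, I extract a limit. Caccioppoli's inequality combined with the $L^2$ normalization bounds $\|v_\ell\|_{H^1(B(0,3/4))}$ uniformly. Passing to a subsequence, $v_\ell \rightharpoonup v$ weakly in $H^1(B(0,3/4);\mathbb{R}^m)$ and strongly in $L^2$. By Theorem \ref{compactness-theorem}, the limit $v$ is a weak solution of $-\text{\rm div}(\widetilde{A}\nabla v)=0$ for some $\widetilde{A} = O^t \widehat{A} O$, and weak lower semicontinuity gives $\average_{B(0,1)}|v|^2 \le 1$. The first step then yields
\begin{equation*}
\average_{B(0,\theta)} \bigl| v - \average_{B(0,\theta)} v \bigr|^2 \le C_0\theta^2 \le \tfrac{1}{2}\theta^{2\sigma},
\end{equation*}
whereas $L^2$ convergence on $B(0,\theta)$ forces the left-hand side to equal the limit of $\average_{B(0,\theta)}|v_\ell - \average_{B(0,\theta)} v_\ell|^2 \ge \theta^{2\sigma}$, a contradiction.

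The crucial point, and the only obstruction to merely quoting Theorem \ref{homo-theorem}, is that the sequence $z_\ell = y_\ell/\varep_\ell$ of translation parameters is typically unbounded, so the translated coefficients $A(\cdot + z_\ell)$ wander through the hull of $A$ rather than converging to $A$ itself. Uniform almost-periodicity is exactly what makes this hull precompact in $C_b(\rd)$, and Theorem \ref{compactness-theorem} is designed to identify the homogenized operator of any resulting subsequential limit as $O^t\widehat{A}O$. This is why the estimates come out independent of the center $y$.
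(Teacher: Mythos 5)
Your proof is correct and follows the paper's three-step compactness argument essentially verbatim: translate to reduce to a matrix in $\mathcal{A}$, fix $\theta$ from the constant-coefficient interior estimate, and contradict via Theorem \ref{compactness-theorem} together with strong $L^2$ convergence on the small ball. One minor bookkeeping point: the limit $v$ solves the constant-coefficient system only on the smaller ball where the uniform $H^1$ bounds hold (you use $B(0,3/4)$, the paper uses $B(0,1/2)$), so the interior estimate gives $\average_{B(0,\theta)}|v-\average_{B(0,\theta)}v|^2 \le C_0\,\theta^2\average_{B(0,3/4)}|v|^2$ rather than directly in terms of $\average_{B(0,1)}|v|^2$, and a volume factor (the paper's $2^d$) must be absorbed into the choice of $\theta$ — a trivial adjustment, but worth making explicit.
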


\begin{proof}
If $\text{\rm div}\big(A(x/\varep)\nabla u_\varep\big) =0$ in $B(y,1)$ and $v(x)=u_\varep (x+y)$,
then $\text{\rm div}\big(B(x/\varep)\nabla v\big) =0 $ in $B(0,1)$, where $B(x)= A(x+\varep^{-1} y) \in
\mathcal{A}$. As a result, it suffices to establish estimate (\ref{4.2-0}) for $y=0$ and for solutions $u_\varep$ of
$\text{\rm div}\big(B(x/\varep)\nabla u_\varep)=0$ in $B(0,1)$, where $B\in \mathcal{A}$.

To this end, we first note that if $w$ is a solution of a second-order elliptic system in $B(0,1/2)$
with constant coefficients satisfying the ellipticity condition (\ref{ellipticity-1}), then 
\begin{equation}\label{4.2-1}
\average_{B(0, \theta)} |w-\average_{B(0, \theta)} w|^2
\le C_0 \, \theta^2 \average_{B(0,1/2)} |w|^2 \quad \text{ for any } 0<\theta<1/4,
\end{equation}
where $C_0$ depends only on $d$, $m$ and $\mu$.
We now choose $\theta\in (0,1/4)$ so small that
\begin{equation}\label{4.2-3}
2^d \, C_0\,  \theta^2 <\theta^{2\sigma}.
\end{equation}
We claim that the estimate (\ref{4.2-0}) with $y=0$ holds for this $\theta$ and for some $\varep_0>0$, which 
depends only on $A$, whenever $u_\varep$ is a weak solution of $\text{\rm div} \big( B(x/\varep)\nabla u_\varep\big)
=0$ in $B(0,1)$ for some $B\in \mathcal{A}$.

Suppose this is not the case.
Then there exist $\{ \varep_\ell\}\subset \mathbb{R}_+$, $\{ B_\ell \}\subset \mathcal{A}$, and $\{ u_\ell\}\subset
H^1(B(0,1);\mathbb{R}^m)$ such that $\varep_\ell\to 0$, 
\begin{equation}\label{4.2-4}
\left\{
\aligned
& \text{\rm div} \big (B_\ell (x/\varep_\ell)\nabla u_\ell\big) =0 \quad \text{ in } B(0,1),\\
&\average_{B(0,1)} |u_\ell |^2 \le 1,
\endaligned
\right.
\end{equation}
and
\begin{equation}\label{4.2-5}
\average_{B(0,\theta)} |u_\ell -\average_{B(0, \theta)} u_\ell|^2 >\theta^{2\sigma}.
\end{equation}
Since $\{ u_\ell\}$ is bounded in $L^2(B(0,1);\mathbb{R}^m)$,
by Cacciopoli's inequality, $\{ u_\ell \}$ is bounded in $H^1(B(0,1/2); \mathbb{R}^m)$.
By passing to a subsequence we may assume that $u_\ell \rightharpoonup u$ weakly in $H^1(B(0,1/2); \mathbb{R}^m)$
and  in $L^2(B(0,1); \mathbb{R}^m)$.
It follows from Theorem \ref{compactness-theorem} that $u$ is a solution of
$\text{\rm div} \big(\widetilde{A} \nabla u)=0$ in $B(0,1/2)$, where $\widetilde{A}=
O^t \widehat{A} O$ for some rotation $O$ in $\rd$.
Since the matrix $ O^t\widehat{A} O$ satisfies the ellipticity condition 
(\ref{ellipticity-1}), estimate (\ref{4.2-1}) holds for $w=u$.
However, since $u_\ell\to u$ strongly in $L^2(B(0,1/2); \mathbb{R}^m)$, we may deduce from 
(\ref{4.2-5}) that
\begin{equation}\label{4.2-7}
\theta^{2\sigma}
\le \average_{B(0, \theta)} |u-\average_{B(0, \theta)} u |^2
\le C_0\, \theta^2 \average_{B(0, 1/2)} |u|^2 
\le 2^d \, C_0\, \theta^2 \average_{B(0,1)} |u|^2,
\end{equation}
where we have used (\ref{4.2-1}) for the second inequality. 

Finally, we note that the weak convergence of $u_\ell$ in $L^2(B(0,1); \mathbb{R}^m)$ and
the inequality in (\ref{4.2-4}) give
$$
\average_{B(0,1)} |u|^2\le 1.
$$ 
In view of (\ref{4.2-7}) we obtain $\theta^{2\sigma}\le 2^d \, C_0\, \theta^2$,
which contradicts (\ref{4.2-3}).
This completes the proof.
\end{proof}

\begin{lemma}\label{step-2}
Fix $0<\sigma<1$.
Let $\varep_0$ and $\theta$ be the constants given by Lemma \ref{step-1}. Let
$u_\varep \in H^1(B(y,1);\mathbb{R}^m)$ be a weak solution of $\text{\rm div} (A(x/\varep)\nabla u_\varep\big)
=0$ in $B(y,1)$ for some $y\in \rd$.
Then, if $0<\varep<\varep_0\, \theta^{k-1}$ for some $k\ge 1$, then
\begin{equation}\label{4.3-0}
\average_{B(y,\theta^k)}
|u_\varep -\average_{B(y, \theta^k)} u_\varep|^2
\le \theta^{2k\sigma} \average_{B(y,1)} |u_\varep|^2.
\end{equation}
\end{lemma}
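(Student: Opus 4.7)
The plan is induction on $k$, with the base case $k=1$ being exactly Lemma \ref{step-1} (after the harmless homogeneity normalization $\average_{B(y,1)}|u_\varep|^2 \le 1$). The inductive step is a standard zoom-in: one rescales $B(y,\theta^k)$ to $B(0,1)$, invokes translation-invariance of the class $\mathcal{A}$ to land back in the setting of Lemma \ref{step-1}, and applies that lemma once more to pick up an additional factor of $\theta^{2\sigma}$.

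Concretely, assume the estimate holds at level $k$, and suppose $0 < \varep < \varep_0 \theta^k$. Then a fortiori $\varep < \varep_0 \theta^{k-1}$, so the inductive hypothesis gives
$$\average_{B(y,\theta^k)} |u_\varep - c_k|^2 \le \theta^{2k\sigma} \average_{B(y,1)}|u_\varep|^2, \qquad c_k := \average_{B(y,\theta^k)} u_\varep.$$
By homogeneity I may assume $\average_{B(y,1)}|u_\varep|^2 = 1$. I would then define
$$w(x) := \theta^{-k\sigma}\bigl(u_\varep(y + \theta^k x) - c_k\bigr), \qquad x \in B(0,1),$$
so that $\average_{B(0,1)}|w|^2 \le 1$. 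A routine chain-rule computation shows that $w$ solves $\text{\rm div}\bigl(\tilde A(x/\varep'')\nabla w\bigr) = 0$ in $B(0,1)$, where $\varep'' := \varep/\theta^k$ and $\tilde A(z) := A(z + y/\varep)$. The hypothesis $\varep < \varep_0 \theta^k$ is designed exactly so that $\varep'' < \varep_0$.

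Applying Lemma \ref{step-1} to $w$ (at $y = 0$) would yield $\average_{B(0,\theta)}|w - \average_{B(0,\theta)} w|^2 \le \theta^{2\sigma}$, and unwinding the definition of $w$ converts this immediately into the desired $(k+1)$-st estimate $\average_{B(y,\theta^{k+1})}|u_\varep - c_{k+1}|^2 \le \theta^{2(k+1)\sigma}$, where $c_{k+1} := \average_{B(y,\theta^{k+1})} u_\varep$.

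The one subtlety, and the place where any careful reader should pause, is that Lemma \ref{step-1} is stated for the fixed matrix $A$, whereas the rescaling produces the translate $\tilde A$ rather than $A$ itself. The resolution is that the proof of Lemma \ref{step-1} in fact establishes the conclusion uniformly for every $B \in \mathcal{A}$ (its opening paragraph explicitly reduces to that class), so a single pair $(\varep_0,\theta)$ serves the whole family, and in particular serves $\tilde A$, which is a pure translate of $A$ and hence lies in $\mathcal{A}$. This uniformity across $\mathcal{A}$ is what makes the iteration self-contained: without it, each induction step would shrink $\varep_0$ and the geometric decay in $k$ would collapse.
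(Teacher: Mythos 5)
Your proposal is correct and follows essentially the same rescaling-plus-induction route the paper sketches; the only cosmetic difference is that you translate the rescaled ball to the origin (forcing you to invoke the $\mathcal{A}$-uniformity of Lemma \ref{step-1}), whereas the paper keeps the center at $\theta^{-k}y$ so that Lemma \ref{step-1}, already stated for an arbitrary center, applies verbatim with the same $A$. Your remark that the proof of Lemma \ref{step-1} really establishes the bound uniformly over all $B\in\mathcal{A}$ is accurate and is indeed what justifies both variants.
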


\begin{proof}
The lemma is proved by an induction argument on $k$, using Lemma \ref{step-1} and
the rescaling property that
if $\mathcal{L}_\varep (u_\varep)=0$ in $B(y,1)$ and $v(x)=u_\varep (\theta^k x)$, then
$$
\mathcal{L}_{\frac{\varep}{\theta^k}} (v) =0 \quad \text{ in } B(\theta^{-k} y, \theta^{-k}).
$$
See \cite{AL-1987} for the periodic case.
\end{proof}

\begin{proof}[\bf Proof of Theorem \ref{holder-theorem}]
By rescaling we may assume that $r=1$.
Suppose that $u_\varep \in H^1(B(y,2);\mathbb{R}^m)$ and
$\text{\rm div} \big(A(x/\varep)\nabla u_\varep\big) =0$ in $B(y,2)$ for some $y\in \rd$.
We show that
\begin{equation}\label{4.4-1}
\average_{B(z,t)} |u_\varep -\average_{B(z,t)} u_\varep|^2
\le C \, t^{2\sigma} \average_{B(z,1)} |u_\varep|^2
\end{equation}
for any $0<t<\theta$ and $z\in B(y,1)$, where $\theta\in (0,1/4)$ is given by Lemma \ref{step-1}.
The  estimate (\ref{holder-estimate}) follows from (\ref{4.4-1})
by Campanato's characterization of H\"older spaces.

With Lemma \ref{step-2} as our disposal,
the proof of (\ref{4.4-1}) follows the same line of argument as in the periodic case.
We refer the reader to \cite{AL-1987} for details.
We point out that the classical local H\"older estimates for solutions
of elliptic systems in divergence form with continuous coefficients are needed  to handle the 
case $\varep\ge \theta\varep_0$ and $0<t<\theta$, as well as the case 
$0<\varep<\theta\varep_0$ and $0<t<\varep/\varep_0$.
\end{proof}

It follows from (\ref{holder-estimate}) and Cacciopoli's inequality that
\begin{equation}\label{holder-estimate-1-0}
\average_{B(y, t)} |\nabla u_\varep|^2
\le C_\sigma \left(\frac{t}{r}\right)^\sigma \average_{B(y,r)} |\nabla u_\varep|^2
\quad \text{ for any } 0<t<r,
\end{equation}
if $\text{\rm div} \big(A(x/\varep)\nabla u_\varep\big)=0$ in $B(y,r)$.
Since $A^*$ satisfies the same ellipticity and almost periodicity conditions as $A$,
estimate (\ref{holder-estimate-1}) also holds for solutions of 
$\text{\rm div} \big(A^*(x/\varep)\nabla u_\varep\big)=0$ in $B(y,r)$.
As a result, one may construct an $m\times m$ matrix of fundamental solutions
$\Gamma_\varep (x,y)=\big( \Gamma_\varep^{\alpha\beta} (x,y)\big)$ such that
for each $y\in \rd$, $\nabla_x \Gamma_\varep (x,y)$ is locally integrable and
\begin{equation}\label{representation}
\phi^\gamma (y)
=\int_{\rd} a_{ij}^{\alpha\beta} (x/\varep) \frac{\partial}{\partial x_j}
\Big\{ \Gamma_\varep^{\beta\gamma} (x,y)\Big\} \frac{\partial\phi^\alpha}{\partial x_i}\, dx
\end{equation}
for any $\phi=(\phi^\alpha)\in C_0^1 (\rd, \mathbb{R}^m)$ (see e.g. \cite{Hofmann-2007}).
Moreover, if $d\ge 3$, the matrix $\Gamma_\varep (x,y)$ satisfies 
\begin{equation} \label{size-estimate}
|\Gamma_\varep (x,y)|\le C\, |x-y|^{2-d}
\end{equation}
for any $x,y\in\rd$ and $x\neq y$, and
\begin{equation}\label{size-estimate-1}
\aligned
|\Gamma_\varep (x+h, y)-\Gamma_\varep (x,y)| &\le \frac{C_\sigma |h|^\sigma}{|x-y|^{d-2+\sigma}},\\
|\Gamma_\varep (x, y+h)-\Gamma_\varep (x,y)| & \le \frac{C_\sigma |h|^\sigma}{|x-y|^{d-2+\sigma}},
\endaligned
\end{equation}
where $x,y,h\in \rd$ and $0<|h|\le (1/2)|x-y|$.
Since $\mathcal{L}^*_\varep \big( \Gamma_\varep (x, \cdot)\big) =0$ in $\rd\setminus \{ x\}$,
using Cacciopopli's inequality and (\ref{size-estimate})-(\ref{size-estimate-1}), we obtain 
\begin{equation}\label{fs-estimate-2}
\left\{\average_{R\le |y-x|\le 2 R} |\nabla_y \Gamma_\varep (x, y)|^2\, dy \right\}^{1/2}
\le \frac{C}{R^{d-1}},
\end{equation}
and
\begin{equation}\label{fs-estimate-3}
\left\{ \average_{R\le |y-x_0|\le 2R}
|\nabla_y \big\{ \Gamma_\varep (x,y)-\Gamma_\varep (z,y)|^2\, dy \right\}^{1/2}
\le \frac{C\, |x-z|^\sigma}{R^{d-1+\sigma}},
\end{equation}
where $x,z\in B(x_0, r)$ and $R\ge 2r$.

\begin{theorem}\label{holder-theorem-1}
Let $u_\varep \in H^1 (B(x_0, 2r);\mathbb{R}^m)$ be a weak solution of
$$
-\text{\rm div} \big( A(x/\varep)\nabla u_\varep \big)=f  +\text{\rm div} (g)\quad \text{  in } 2B=B(x_0, 2r).
$$
Let $0<\sigma<1$. 
Then, for any $x,z\in B=B(x_0,r)$, 
\begin{equation}\label{holder-estimate-1}
\aligned
|u_\varep (x) -u_\varep (z)|
&\le C\,  |x-z|^\sigma
\bigg\{ r^{-\sigma} \left(\average_{2B} |u_\varep|^2\right)^{1/2}
+\sup_{\substack{ y\in B\\ 0<t<r}} t^{2-\sigma} \left(\average_{B(y,t)} |f|^2\right)^{1/2}\\
&\qquad\qquad\qquad\qquad\qquad
+\sup_{\substack{ y\in B\\ 0<t<r}} t^{1-\sigma} \left(\average_{B(y,t)} |g|^2\right)^{1/2} \bigg\},
\endaligned
\end{equation}
where $C$ depends only on $p$, $\sigma$ and $A$. In particular,
\begin{equation}\label{L-infty-estimate}
\aligned
\| u_\varep\|_{L^\infty(B )}
 \le C
\left(\average_{2B} |u_\varep|^2\right)^{1/2}
&+C\, r^\sigma \, \sup_{\substack{ y\in B\\ 0<t<r}} t^{2-\sigma} \left(\average_{B(y,t)} |f|^2\right)^{1/2}\\
&+C\, r^\sigma \sup_{\substack{ y\in B\\ 0<t<r}} t^{1-\sigma} \left(\average_{B(y,t)} |g|^2\right)^{1/2},
\endaligned
\end{equation}
where $C$ depends only on $p$, $\sigma$ and $A$.
\end{theorem}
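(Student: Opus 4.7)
I would run a Morrey--Campanato iteration. After rescaling to $r=1$ and $x_0=0$, set
\[
M_f := \sup_{y\in B,\,0<t<1} t^{2-\sigma}\Big(\average_{B(y,t)}|f|^2\Big)^{1/2},\quad M_g := \sup_{y\in B,\,0<t<1} t^{1-\sigma}\Big(\average_{B(y,t)}|g|^2\Big)^{1/2},
\]
and let $\Psi(y,s) := (\average_{B(y,s)}|u_\varepsilon - \bar u_{B(y,s)}|^2)^{1/2}$ denote the $L^2$ mean oscillation on $B(y,s)$. The plan is to establish the Campanato bound
\[
\Psi(y,s)\le Cs^\sigma\Big[\Big(\average_{2B}|u_\varepsilon|^2\Big)^{1/2}+M_f+M_g\Big] \quad \text{for every } y\in B,\ 0<s\le 1;
\]
then (\ref{holder-estimate-1}) follows from Campanato's characterization of H\"older spaces, while (\ref{L-infty-estimate}) follows by averaging $|u_\varepsilon(x)|\le|u_\varepsilon(x)-\bar u_B|+|\bar u_B|$ over $z\in B$ and using $|x-z|^\sigma\le(2r)^\sigma$ in (\ref{holder-estimate-1}).

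On each ball $B(y,s)\subset 2B$ I decompose $u_\varepsilon=v+w$, where $v$ solves $\mathcal{L}_\varepsilon v=0$ in $B(y,s)$ with $v|_{\partial B(y,s)}=u_\varepsilon$, and $w=u_\varepsilon-v\in H^1_0(B(y,s))$ solves $\mathcal{L}_\varepsilon w=f+\mathrm{div}(g)$. Testing against $w$ itself and applying Poincar\'e's inequality in $H^1_0$ yields
\[
\Big(\average_{B(y,s)}|w|^2\Big)^{1/2}\le Cs^2\Big(\average_{B(y,s)}|f|^2\Big)^{1/2}+Cs\Big(\average_{B(y,s)}|g|^2\Big)^{1/2}\le C(M_f+M_g)s^\sigma.
\]
Fixing an auxiliary exponent $\sigma''\in(\sigma,1)$ and applying Theorem~\ref{holder-theorem} at exponent $\sigma''$ to $v-\bar v_{B(y,s)}$, which is also a weak solution of $\mathcal{L}_\varepsilon(\cdot)=0$, together with the identity $v-\bar v_{B(y,s)}=(u_\varepsilon-\bar u_{B(y,s)})-(w-\bar w_{B(y,s)})$ and the crude estimate $(\average_{B(y,\theta s)}|w|^2)^{1/2}\le\theta^{-d/2}(\average_{B(y,s)}|w|^2)^{1/2}$, I obtain the one-step recursion
\[
\Psi(y,\theta s)\le C\theta^{\sigma''}\Psi(y,s)+C\theta^{-d/2}(M_f+M_g)s^\sigma,\quad \theta\in(0,1/4).
\]
Choosing $\theta$ so small that $C\theta^{\sigma''}\le\theta^{\sigma'}$ for a fixed $\sigma'\in(\sigma,\sigma'')$ and iterating at scales $s_k=\theta^k$, the inhomogeneous contributions sum geometrically with ratio $\theta^{\sigma'-\sigma}<1$; this produces the Campanato bound after sandwiching general $s$ between consecutive $s_k$.

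\textbf{Main obstacle.} The delicate point is obtaining the \emph{exact} exponent $\sigma$ in the final H\"older estimate rather than some strictly smaller exponent, which is what a naive iteration would produce. The remedy is to introduce three nested exponents $\sigma<\sigma'<\sigma''<1$ and apply Theorem~\ref{holder-theorem} at the \emph{largest}, $\sigma''$: this creates enough slack to absorb $C\theta^{\sigma''}$ into $\theta^{\sigma'}$ in the recursion while preserving the exponent $\sigma$ on the inhomogeneous tail $s^\sigma$. This flexibility is available precisely because Theorem~\ref{holder-theorem} holds for every H\"older exponent in $(0,1)$.
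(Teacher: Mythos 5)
Your proposal is correct, and it takes a genuinely different route from the paper. The paper first constructs the $m\times m$ matrix of fundamental solutions $\Gamma_\varepsilon(x,y)$ for $\mathcal{L}_\varepsilon$ (using the interior H\"older estimate of Theorem \ref{holder-theorem} and Cacciopoli's inequality to get the size and H\"older estimates (\ref{size-estimate})--(\ref{fs-estimate-3})), writes $u_\varepsilon$ via the explicit representation formula (\ref{4.6-0}) with a cut-off $\varphi$, and then estimates each convolution-type term by dyadic decompositions of the singular integrals in (\ref{4.6-1-1}) and (\ref{4.6-1-2}). You instead run a harmonic-replacement/Campanato iteration: decompose $u_\varepsilon=v+w$ on each ball, use the energy estimate for the inhomogeneous part $w$ and Theorem \ref{holder-theorem} for the homogeneous part $v$, and iterate. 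Both proofs feed off Theorem \ref{holder-theorem}, but yours bypasses the fundamental-solution machinery entirely, which makes it more elementary and more portable (it would work, e.g., in a setting without a global two-sided pointwise bound on $\Gamma_\varepsilon$). The paper's choice is natural given that it needs $\Gamma_\varepsilon$ and the representation formula anyway, both in this section (Remark \ref{remark-4.1}) and in the proof of Theorem \ref{theorem-5.3}, so the extra cost is amortized; your approach does not give that reusable representation. One side remark on the exponent bookkeeping: you correctly identify that applying Theorem \ref{holder-theorem} at an auxiliary exponent $\sigma''>\sigma$ and absorbing the constant via $C\theta^{\sigma''}\le\theta^{\sigma'}$ with $\sigma'\in(\sigma,\sigma'')$ is exactly what is needed to avoid the logarithmic loss that a naive iteration at exponent $\sigma$ would produce, and this freedom is available precisely because the homogeneous estimate holds for all exponents in $(0,1)$; this is a genuine subtlety and you handled it correctly.
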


\begin{proof} 
We first note that the $L^\infty$ estimate (\ref{L-infty-estimate}) follows easily from (\ref{holder-estimate-1}).
To see (\ref{holder-estimate-1}), we assume $d\ge 3$;
the case $d=2$ follows from the case $d=3$ by adding a dummy variable (the method of ascending).
We choose a cut-off function $\varphi\in C_0^\infty (B(x_0,7r/4))$
such that $0 \le \varphi\le1$,
$\varphi=1$ in $B(x_0, 3r/2)$, and $|\nabla\varphi|\le Cr^{-1}$. 
Since
$$
\mathcal{L}_\varep (u_\varep)
=f \varphi +\text{\rm div} (g\varphi) -g\nabla \varphi
-A(x/\varep)\nabla u_\varep \cdot \nabla \varphi
-\nabla \big\{ A(x/\varep) u_\varep \cdot \nabla \varphi\big\},
$$
we obtain that for $x\in B(x_0,r)$,
\begin{equation}\label{4.6-0}
\aligned
u_\varep (x) &=\int_{\rd} \Gamma_\varep (x,y) f(y)\varphi (y)\, dy
-\int_{\rd} \nabla_y \Gamma_\varep (x,y) g(y) \varphi (y)\, dy\\
&\quad  -\int_{\rd} \Gamma_\varep (x,y) g(y) \nabla \varphi (y)\, dy
-\int_{\rd} \Gamma_\varep (x,y) A(y/\varep) \nabla u_\varep(y) \cdot \nabla \varphi (y)\, dy\\
&\quad
+\int_{\rd}\nabla_y \Gamma_\varep (x,y)  A(y/\varep) u_\varep(y) \nabla \varphi (y)\, dy.
\endaligned
\end{equation}
 It follows that for any $x, z\in B(x_0,r)$,
\begin{equation}\label{4.6-1}
\aligned
|u_\varep (x)-u_\varep (z)|
\le C\int_{2B}  & |\Gamma_\varep (x,y)-\Gamma_\varep (z,y)|\, |f(y)|\, dy\\
&+C \int_{2B} |\nabla_y \big\{ \Gamma_\varep (x,y)-\Gamma_\varep (z,y)\big\} |\, |g(y)|\, dy\\
&+ C\int_{2B} |\Gamma_\varep (x,y)-\Gamma_\varep (z,y)|\, |g(y)|\, |\nabla \varphi (y)|\, dy\\
&+C \int_{2B} |\Gamma_\varep (x,y)-\Gamma_\varep (z,y)|\, |\nabla u_\varep (y)|\, |\nabla \varphi (y)|\, dy\\
& +C\int_{2B}|\nabla_y \Gamma_\varep (x,y)-\nabla_y \Gamma_\varep (z,y)|\, |u_\varep (y)|\, |\nabla \varphi (y)|\, dy,
\endaligned
\end{equation}
where $2B=B(x_0, 2r)$.
Since $|\nabla \varphi|=0$ in $ B(x_0, 3r/2)$ and $x,z\in B(x_0,r)$,
 the last three terms in the right hand side of 
(\ref{4.6-1}) may be handled easily, using estimate (\ref{size-estimate-1}), Cacciopoli's
inequality, and (\ref{fs-estimate-3}). They are bounded by 
$$
C_\sigma \left(\frac{|y-z|}{r}\right)^\sigma\left\{
\left(\average_{2B} |u_\varep|^2\right)^{1/2}
+r^2 \left(\average_{2B} |f|^2\right)^{1/2}
+r \left(\average_{2B} |g|^2\right)^{1/2} \right\},
$$
for any $\sigma\in (0,1) $.

Next, we use  (\ref{size-estimate}) and (\ref{size-estimate-1})
 to bound the first term in the right hand side of (\ref{4.6-1}) by
\begin{equation}\label{4.6-1-1}
C \int_{B(x,4s)} \frac{|f(y)|\, dy}{|x-y|^{d-2}}
+ C\int_{B(z,5s)} \frac{|f(y)|\, dy}{|z-y|^{d-2}}
+C s^{\sigma_1} \int_{2B\setminus B(x,4s)} \frac{|f(y)|\, dy}{|x-y|^{d-2+\sigma_1}},
\end{equation}
where $s=|x-z|$ and $\sigma_1 \in (\sigma, 1)$.
By decomposing $B(x,4s)$ as a union of sets $ \{ y: |y-x|\sim 2^j s \}$,
 it is not hard to verify that the first term in (\ref{4.6-1-1}) is bounded
 by
 $$
 C\, s^\sigma \sup_{\substack{ y\in B \\ 0<t<r}} t^{2-\sigma} 
 \left(\average_{B(y,t)} |f|^2\right)^{1/2}.
 $$
 The other two terms in (\ref{4.6-1-1}) may be handled in a similar manner.
 
Finally, the second term in the right hand side of (\ref{4.6-1})
is bounded by
\begin{equation}\label{4.6-1-2}
\aligned
&\int_{B(x,4s)} |\nabla_y \Gamma_\varep (x,y)|\, |g(y)|\, dy
+\int_{B(z,5s)} |\nabla_y \Gamma_\varep (z,y)|\, |g(y)|\, dy\\
&\qquad \qquad +\int_{2B\setminus B(x, 4s)}
|\nabla_y \big\{ \Gamma_\varep (x,y) -\Gamma_\varep (z,y)\big\}|\, |g(y)|\, dy,
\endaligned
\end{equation}
By decomposing  $2B\setminus B(x, 4s)$ as a union of sets
$\{ y: |y-x|\sim 2^j s\}$, and using H\"older inequality and (\ref{fs-estimate-3})
(with $\sigma$ replaced by some $\sigma_1 \in (\sigma, 1)$),
we may bound the third term in (\ref{4.6-1-2}) by
$$
C\, s^\sigma \sup_{\substack{ y\in B\\ 0<t<r}} t^{1-\sigma}
\left(\average_{B(y,t)} |g|^2\right)^{1/2}.
$$
The other two terms in (\ref{4.6-1-2}) may be handled in a similar manner.
This completes the proof.
  \end{proof}

\begin{remark}\label{remark-4.1}
{\rm
Suppose that $
-\text{\rm div} \big(A(x/\varep)\nabla u_\varep\big)=f
$ 
in $ 2B$ and  $f\in L^p(2B; \mathbb{R}^m)$ for some $p\ge 2$, where $2B=B(x_0,2r)$.
Assume $d\ge 3$.
Using (\ref{4.6-0}) and Cacciopoli's inequality, we may obtain that
 \begin{equation}\label{4.7-1}
|u_\varep (x)|\le C \int_{2B} \frac{|f(y)|}{|x-y|^{d-2}}\, dy
+C \left(\average_{2B}|u_\varep|^2\right)^{1/2}
+C r^2 \left(\average_{2B} |f|^2\right)^{1/2}
\end{equation}
for any $x\in B=B(x_0,r)$.
By the fractional integral estimates, this gives
\begin{equation}\label{4.7-2}
\left(\average_{B} |u_\varep|^q\right)^{1/q}
\le C \left(\average_{2B} |u_\varep|^2\right)^{1/2}
+C r^2 \left(\average_{2B} |f|^p\right)^{1/p},
\end{equation}
where $0<\frac{1}{p}-\frac{1}{q} \le \frac{2}{d}$.
}
\end{remark}



\section{Uniform boundary H\"older estimates and proof of Theorem \ref{main-theorem-3}}
\setcounter{equation}{0}

For $x_0\in \partial\Omega$ and $0<r<r_0=\text{diam}(\Omega)$, define
\begin{equation}\label{Omega}
\Omega_r (x_0)=B(x_0, r)\cap \Omega \quad \text{ and } \quad \Delta_r (x_0)=B(x_0, r)\cap \partial\Omega.
\end{equation}

\begin{theorem}\label{boundary-holder-theorem}
Let $\Omega$ be a bounded $C^{1, \eta}$ domain in $\rd$ for some $\eta>0$.
Let $u_\varep\in H^1(\Omega_r(x_0); \mathbb{R}^m)$ be a weak solution of
$\mathcal{L}_\varep (u_\varep) =0$ in $\Omega_r (x_0)$ and $u_\varep =0$ on $\Delta_r (x_0)$,
for some $x_0\in \partial\Omega$ and $0<r<r_0$.
Then, for any $0<\sigma<1$ and $x,y\in \Omega_{r/2} (x_0)$,
\begin{equation}\label{boundary-holder}
|u_\varep (x)-u_\varep (y)|
\le C\, \left(\frac{|x-y|}{r}\right)^\sigma \left(\average_{\Omega_r (x_0)} |u_\varep|^2\right)^{1/2},
\end{equation}
where $C$ depends only on $\sigma$, $A$ and $\Omega$.
\end{theorem}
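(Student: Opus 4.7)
The plan is to adapt the three-step compactness scheme used for the interior estimate \eqref{holder-estimate} to a boundary setting. By a rigid motion I first reduce to the model case $x_0=0$ with the inward normal $e_d$, so that $\Omega$ is locally given by $\{x_d>\psi(x')\}$ with $\psi\in C^{1,\eta}$, $\psi(0)=0$ and $\nabla\psi(0)=0$. Because we only translate and rotate, the transformed coefficient matrix still belongs to the class $\mathcal{A}$ defined in (\ref{class-A}), so Theorem \ref{compactness-theorem} applies with constants uniform across boundary points.

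First I would establish a one-step decay: there exist $\varep_0\in(0,1)$ and $\theta\in(0,1/4)$, depending only on $\sigma$, $A$ and the $C^{1,\eta}$ character of $\Omega$, such that for any $B\in\mathcal{A}$ and any such $\psi$ with $\|\psi\|_{C^{1,\eta}}\le M$, if $u_\varep\in H^1(D_1;\mathbb{R}^m)$ satisfies $-\text{\rm div}(B(x/\varep)\nabla u_\varep)=0$ in $D_1=\{|x|<1,\,x_d>\psi(x')\}$, vanishes on $\Gamma_1=\{|x|<1,\,x_d=\psi(x')\}$, and $\average_{D_1}|u_\varep|^2\le 1$, then $\average_{D_\theta}|u_\varep|^2\le\theta^{2\sigma}$ for all $\varep<\varep_0$. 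The argument is by contradiction: a failing sequence $(u_\ell,\varep_\ell,B_\ell,\psi_\ell)$ would, after extracting subsequences, satisfy $\psi_\ell\to\psi_\infty$ in $C^{1,\eta/2}$ and, by Caccioppoli up to $\Gamma_1$ and Rellich, $u_\ell\to u$ weakly in $H^1$ and strongly in $L^2$ on a slightly smaller scale. Theorem \ref{compactness-theorem} identifies $u$ as a solution of $-\text{\rm div}(\widetilde A\nabla u)=0$ on $D_1^{\psi_\infty}$ with zero trace on the corresponding $C^{1,\eta}$ boundary piece, for some constant matrix $\widetilde A=O^t\widehat A O$. Classical boundary $C^{1,\alpha}$ regularity for constant-coefficient systems on $C^{1,\eta}$ domains then yields $\average_{D_\theta^{\psi_\infty}}|u|^2\le C_0\theta^2$; fixing $\theta$ so that $2^d C_0\theta^2<\theta^{2\sigma}$ produces the desired contradiction.

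Second, I iterate by rescaling. If $v(x)=u_\varep(\theta^k x)$, then $v$ solves the analogous problem with $\varep$ replaced by $\varep/\theta^k$, with coefficients still in $\mathcal{A}$, and defining function $\psi_k(x')=\theta^{-k}\psi(\theta^k x')$ whose $C^{1,\eta}$ norm does not exceed $M$ (in fact it shrinks like $\theta^{k\eta}$). A straightforward induction gives
$$
\average_{D_{\theta^k}}|u_\varep|^2\le \theta^{2k\sigma}\average_{D_1}|u_\varep|^2 \quad\text{whenever } \varep<\varep_0\theta^{k-1}.
$$
For intermediate scales $t$ with $\theta^{k+1}\le t<\theta^k$, and for the residual range $\varep\ge\varep_0 t$, the classical boundary Hölder estimate for divergence-form systems with continuous coefficients (applied at fixed $\varep$) handles the remaining cases. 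Transferring back to the original frame yields the Campanato-type bound
$$
\average_{\Omega_t(x_0)}|u_\varep|^2\le C\,(t/r)^{2\sigma}\,\average_{\Omega_r(x_0)}|u_\varep|^2,\quad 0<t<r/2,
$$
uniformly in $x_0\in\partial\Omega$ and $\varep>0$. Combined with the interior Campanato estimate extractable from \eqref{holder-estimate}, Campanato's characterization of $C^\sigma$ on Lipschitz domains delivers (\ref{boundary-holder}).

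The main obstacle I expect is the compactness step: one must pass to the limit simultaneously in the almost-periodic coefficients (via Theorem \ref{compactness-theorem}) and in the domains $\{x_d>\psi_\ell(x')\}$, so that the weak limit $u$ genuinely inherits the zero boundary condition on the limiting $C^{1,\eta}$ piece and enjoys boundary $C^{1,\alpha}$ regularity with a constant $C_0$ depending only on $M$ and the ellipticity. The $C^{1,\eta}$ hypothesis on $\partial\Omega$ is what makes the family $\{\psi:\|\psi\|_{C^{1,\eta}}\le M,\,\psi(0)=0,\,\nabla\psi(0)=0\}$ pre-compact in $C^{1,\eta/2}$ and provides the uniform boundary estimate for the constant-coefficient limit problem. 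Once this compactness step is executed with constants uniform in $x_0$ (which is exactly why Theorem \ref{compactness-theorem}, rather than Theorem \ref{homo-theorem}, is needed), the iteration and bookkeeping are routine.
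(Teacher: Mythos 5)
Your proposal is correct and follows essentially the same route as the paper. The paper reduces to the flattened half-ball setting (Theorem \ref{b-h-theorem-1}), packages the simultaneous limit in coefficients and domains as Lemma \ref{sequence-homo-lemma}, and then invokes the Avellaneda--Lin three-step compactness scheme; your contradiction argument for the one-step decay (extracting $C^{1,\eta/2}$-convergent subsequences of the defining functions, applying Theorem \ref{compactness-theorem} to identify the constant-coefficient limit problem with the inherited zero trace, and then using classical boundary regularity to choose $\theta$), followed by iteration with the rescaled defining functions $\theta^{-k}\psi(\theta^k\cdot)$ and the Campanato characterization, reproduces exactly this scheme with the key lemma unfolded inside the contradiction step.
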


Let $\phi: \mathbb{R}^{d-1} \to \mathbb{R}$ be a $C^{1, \eta}$ function
such that 
\begin{equation}\label{phi}
\phi (0)=0, \ \ 
\nabla \phi (0)=0, \text{ and }\ \  \| \nabla \phi\|_{C^{0, \eta} (\mathbb{R}^{d-1})}
\le M_0.
\end{equation}
Let 
\begin{equation}\label{D-I}
\aligned
D(r)=D(r, \phi) &=\big\{ (x^\prime, x_d)\in \rd: \ |x^\prime|< r \text{ and }  \phi (x^\prime)< x_d<
\phi (x^\prime) +10 (M_0 +1) r\big\},\\
I(r)= I (r, \phi) & = \big\{ (x^\prime, \phi (x^\prime))\in \rd: \ 
|x^\prime|<r \big\}.
\endaligned
\end{equation}
By translation and rotation Theorem \ref{boundary-holder-theorem} may be reduced to the following.

\begin{theorem}\label{b-h-theorem-1}
Let $u_\varep \in H^1 (D(r);\mathbb{R}^m)$ be a weak solution of $\text{\rm div} \big(B(x/\varep)\nabla u_\varep\big)=0$
in $D(r)$  and $u_\varep =0$ on $I(r)$, for some $r>0$ and $B\in \mathcal{A}$.
Then, for any $0<\sigma<1$ and $x,y\in D(r/2)$, 
\begin{equation}\label{b-h-estimate-1}
|u_\varep (x)-u_\varep (y)|
\le C\, \left(\frac{|x-y|}{r}\right)^\sigma \left(\average_{D_r} |u_\varep|^2\right)^{1/2},
\end{equation}
where $C$ depends only on $\sigma$, $A$ and $(\eta, M_0)$ in (\ref{phi}).
\end{theorem}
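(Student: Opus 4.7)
The plan is to adapt the three-step compactness argument of the interior case (Lemmas \ref{step-1}, \ref{step-2}, and the proof of Theorem \ref{holder-theorem}) to the boundary setting. The class $\mathcal{A}$ is already closed under rotations and translations, so the rescaling and compactness tools from Section 3 extend uniformly in the boundary point (implicit at $0$) and uniformly over $B \in \mathcal{A}$.

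\emph{Step 1 (one-step improvement).} I claim that for any fixed $\sigma \in (0,1)$ there exist $\varepsilon_0>0$ and $\theta \in (0,1/4)$, depending only on $\sigma$, $A$ and $(\eta,M_0)$, such that whenever $0<\varepsilon<\varepsilon_0$, $B \in \mathcal{A}$, $\phi$ satisfies (\ref{phi}), and $u_\varepsilon$ is a weak solution of $\text{\rm div}(B(x/\varepsilon)\nabla u_\varepsilon)=0$ in $D(1,\phi)$ with $u_\varepsilon = 0$ on $I(1,\phi)$ and $\average_{D(1,\phi)} |u_\varepsilon|^2 \le 1$, then $\average_{D(\theta,\phi)} |u_\varepsilon|^2 \le \theta^{2\sigma}$. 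I prove this by contradiction, mirroring Lemma \ref{step-1}: if it fails there are sequences $\varepsilon_\ell \to 0$, $B_\ell \in \mathcal{A}$, $\phi_\ell$ satisfying (\ref{phi}), and $u_\ell$ violating the conclusion. As in the proof of Theorem \ref{compactness-theorem}, a subsequence gives $B_\ell \to \widetilde{B}$ uniformly in $\rd$ with $\widehat{\widetilde{B}} = O^t \widehat{A} O$ for some rotation $O$. By Arzel\`a--Ascoli, $\phi_\ell \to \phi$ in $C^1$ for some $\phi$ satisfying (\ref{phi}). Applying the change of variables $(x',x_d) \mapsto (x', x_d - \phi_\ell(x'))$ to flatten the boundary, Caccioppoli gives a uniform $H^1$ bound on a fixed half-ball, so $u_\ell \to u$ weakly in $H^1$ and strongly in $L^2$, with $u$ vanishing on the flat portion. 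Theorem \ref{compactness-theorem} then yields $\text{\rm div}(\widehat{\widetilde{B}} \nabla u) = 0$ in $D(1/2,\phi)$ with $u=0$ on $I(1/2,\phi)$. Since $\phi$ is $C^{1,\eta}$ and $\widehat{\widetilde{B}}$ is a constant matrix satisfying (\ref{ellipticity-1}), classical $C^{1,\eta}$ boundary regularity applied at $0$ gives $\average_{D(\theta,\phi)} |u|^2 \le C_0 \theta^2$, and choosing $\theta$ small enough that $C_0 \theta^{2-2\sigma}<1$ produces the required contradiction.

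\emph{Step 2 (iteration) and Step 3 (Campanato).} Step 2 is an induction on $k$, mimicking Lemma \ref{step-2}: the rescaling $v(x) = u_\varepsilon(\theta^k x)$ produces an equation in $D(1,\phi_k)$ with $\phi_k(x') = \theta^{-k}\phi(\theta^k x')$, and crucially $\phi_k$ again satisfies (\ref{phi}) with the same $M_0$ because $\|\nabla\phi_k\|_{C^{0,\eta}} = \theta^{k\eta}\|\nabla\phi\|_{C^{0,\eta}}$. This yields
\[
\average_{D(\theta^k,\phi)} |u_\varepsilon|^2 \le \theta^{2k\sigma} \average_{D(1,\phi)} |u_\varepsilon|^2 \qquad \text{for } 0<\varepsilon < \varepsilon_0 \theta^{k-1}.
\]
Step 3 handles the residual small scales $0<t<\varepsilon/\varepsilon_0$ by the classical boundary Hölder estimate for operators with continuous coefficients (applied to $B(\cdot/\varepsilon)$ on balls of radius $\varepsilon$), and combines this with the interior estimate (Theorem \ref{holder-theorem}) at points separated from $I(r,\phi)$. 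The standard Campanato-type dichotomy in $|x-y|$ versus $\text{dist}(x, I(r,\phi))$ then upgrades the decay at $0$ to (\ref{b-h-estimate-1}).

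\emph{Main obstacle.} The delicate point is the compactness step, where convergent subsequences must be extracted simultaneously for the triple $(B_\ell,\phi_\ell,u_\ell)$, and the Dirichlet condition on the moving surfaces $I(1,\phi_\ell)$ must pass to the limit on $I(1,\phi)$. This is precisely where $C^{1,\eta}$ (rather than merely Lipschitz) boundary regularity is needed: it supplies a uniformly bounded family $\{\phi_\ell\}$ in $C^{1,\eta}$ that converges in $C^1$, which in turn yields a uniformly controlled boundary-flattening change of variables and reduces the analysis to $H^1$ convergence on a fixed half-ball. The closure of $\mathcal{A}$ under rotations is what ensures that the limiting constant operator has the form $\widehat{\widetilde{B}} = O^t \widehat{A} O$, so that classical constant-coefficient boundary regularity can be invoked at the final step.
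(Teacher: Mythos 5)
Your proposal is correct and follows essentially the same route as the paper: the paper isolates the simultaneous compactness of the triple $(B_\ell,\phi_\ell,u_\ell)$ as Lemma \ref{sequence-homo-lemma} and then invokes the three-step scheme of Avellaneda--Lin, while you inline that lemma's content into Step 1 of the contradiction argument. Your observation in Step 2 that $\phi_k(x')=\theta^{-k}\phi(\theta^k x')$ has $C^{1,\eta}$ seminorm $\theta^{k\eta}[\nabla\phi]_{C^{0,\eta}}\le M_0$, so the class in (\ref{phi}) is stable under the rescaling, is exactly the detail that makes the induction close; the paper leaves this implicit by reference to \cite{AL-1987}.
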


To prove Theorem \ref{b-h-theorem-1} we need a homogenization result 
for a sequence of matrices in the class $\mathcal{A}$ on a sequence of domains.

\begin{lemma}\label{sequence-homo-lemma}
Let $\{ B_\ell \}$ be a sequence of matrices in  $\mathcal{A}$.
Let $\{ \phi_\ell \}$ be a sequence of $C^{1, \eta}$ functions satisfying (\ref{phi}).
Suppose that $\text{\rm div} (B_\ell (x/\varep_\ell)\nabla u_\ell) =0$ in $D(r, \phi_\ell)$
and $u_\ell =0$ on $I (r, \phi_\ell)$ for some $r>0$, where $\varep_\ell \to 0$ and
$\| u_\ell \|_{H^1(D(r, \phi_\ell))} \le C$.
Then there exist subsequences of $\{ \phi_\ell\}$ and $\{ u_\ell\} $, which we still denote by
$\{ \phi_\ell \}$ and $\{ u_\ell\}$ respectively, and a function $\phi$ satisfying (\ref{phi}),
$u\in H^1(D(r, \phi); \mathbb{R}^m)$, and a constant matrix $\widetilde{B}$, such that
\begin{equation}\label{s-h-1}
\left\{
\aligned
& \phi_\ell \to \phi \text{ in } C^1 (|x^\prime|<r),\\
& u_\ell (x^\prime, x_d -\phi_\ell (x^\prime))
\rightharpoonup u (x^\prime, x_d-\phi(x^\prime)) \text{ weakly in } H^1(D(r, 0); \mathbb{R}^m),
\endaligned
\right.
\end{equation}
and
\begin{equation}\label{s-h-2}
\text{\rm div} \big( \widetilde{B} \nabla u\big) =0 
\quad \text{ in } D(r, \phi) \quad \text{ and } \quad u =0 \text{ on } I (r, \phi).
\end{equation}
Moreover, the matrix $\widetilde{B}$, which is given by $O^t \widehat{A} O$ for some
rotation $O$ in $\rd$, satisfies the ellipticity condition (\ref{ellipticity-1}).
\end{lemma}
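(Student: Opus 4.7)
The plan is to separate the two compactness issues—the varying boundary profiles $\phi_\ell$ and the oscillating coefficients $B_\ell(x/\varep_\ell)$—by first extracting the geometric limit, then flattening to get a weak $H^1$ limit of the solutions, and finally invoking Theorem~\ref{compactness-theorem} on subdomains compactly contained in the limit domain.

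First, since $\{\phi_\ell\}$ is bounded in $C^{1,\eta}$ by (\ref{phi}), Arzel\`a--Ascoli delivers a subsequence with $\phi_\ell \to \phi$ in $C^1(\{|x'|\le r\})$, and the limit $\phi$ still satisfies (\ref{phi}). Writing $B_\ell(y) = O_\ell^t A(O_\ell y + z_\ell) O_\ell$ for some rotations $O_\ell$ and translations $z_\ell \in \rd$, I pass to a further subsequence along which $O_\ell \to O$ (by compactness of the rotation group) and $A(\cdot + z_\ell)$ converges uniformly to some almost-periodic $B$ (by precompactness of $\{A(\cdot + z)\}_z$ in $C_b(\rd)$, as used in the proof of Theorem~\ref{compactness-theorem}). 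Consequently $B_\ell \to \widetilde B$ uniformly on $\rd$ with $\widetilde B(y) = O^t B(Oy) O$, and $\widehat{\widetilde B} = O^t \widehat A O$ satisfies (\ref{ellipticity-1}) since $O$ is orthogonal.

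Next, I flatten via the shear $\Phi_\ell(y', y_d) = (y', y_d + \phi_\ell(y'))$, which maps $D(r, 0)$ onto $D(r, \phi_\ell)$ with unit Jacobian determinant and $\|\nabla \Phi_\ell\|_{L^\infty} \le 1 + M_0$. Setting $\tilde u_\ell = u_\ell \circ \Phi_\ell$, the uniform $H^1$ bound on $u_\ell$ translates to a uniform $H^1$ bound on $\tilde u_\ell$ on $D(r, 0)$, so after a further extraction $\tilde u_\ell \rightharpoonup \tilde u$ weakly in $H^1(D(r, 0); \mathbb{R}^m)$. Define $u(x', x_d) = \tilde u(x', x_d - \phi(x'))$ on $D(r, \phi)$. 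Since the trace of $\tilde u_\ell$ on $I(r, 0)$ vanishes and the trace map is continuous under weak $H^1$ convergence, $\tilde u(y', 0) = 0$, which translates to $u = 0$ on $I(r, \phi)$.

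The main obstacle is passing the equation $-\operatorname{div}(B_\ell(x/\varep_\ell) \nabla u_\ell) = 0$ to the limit while the underlying domains themselves are moving, since Theorem~\ref{compactness-theorem} is stated on a fixed Lipschitz domain. I resolve this by localizing: fix any open $K$ with $K \Subset D(r, \phi)$. Because $\phi_\ell \to \phi$ uniformly, eventually $K \subset D(r, \phi_\ell)$ and $u_\ell$ satisfies the PDE in $K$. The $C^1$ convergence $\Phi_\ell \to \Phi$ together with the weak $H^1$ convergence of $\tilde u_\ell$ on $D(r, 0)$ implies $u_\ell \rightharpoonup u$ weakly in $H^1(K; \mathbb{R}^m)$: indeed, for any $\varphi \in L^2(K)$, changing variables gives $\int_K u_\ell \varphi\, dx = \int_{\Phi_\ell^{-1}(K)} \tilde u_\ell (\varphi \circ \Phi_\ell)\, dy$, and the uniform convergence $\Phi_\ell^{-1}(K) \to \Phi^{-1}(K)$ together with dominated convergence reduces this to the weak $H^1$ convergence of $\tilde u_\ell$; the gradient pairings are handled similarly using the uniform $C^0$ convergence of $D\Phi_\ell$ and $D\Phi_\ell^{-1}$. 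Since $\{B_\ell\} \subset \mathcal{A}$ has already been arranged so that the limiting rotation and translation produce $\widetilde B$ as above, Theorem~\ref{compactness-theorem} applied with $\Omega = K$ and $F = 0$ yields $-\operatorname{div}(\widetilde A \nabla u) = 0$ in $K$ with $\widetilde A = O^t \widehat A O = \widehat{\widetilde B}$. As $K \Subset D(r, \phi)$ was arbitrary, the equation holds on all of $D(r, \phi)$, completing the proof.
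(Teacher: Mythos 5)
Your proof is correct and follows essentially the same route as the paper's: extract subsequences so that $\phi_\ell\to\phi$ in $C^1$, $O_\ell\to O$, and the flattened functions converge weakly in $H^1(D(r,0))$; read off the boundary condition from the flattened weak limit; and pass to the limit in the equation by applying Theorem \ref{compactness-theorem} on subdomains $K\Subset D(r,\phi)$. You merely spell out two steps the paper leaves implicit, namely that $u_\ell\rightharpoonup u$ weakly in $H^1(K)$ via the change of variables, and the trace argument for $u=0$ on $I(r,\phi)$.
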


\begin{proof}
Since $\|\nabla \phi_\ell \|_{C^{0, \eta}(\mathbb{R}^{d-1})}\le M_0$ and
$\| u_\ell \|_{H^1(D(r, \phi_\ell))} \le C$, (\ref{s-h-1}) follows by passing to subsequences.
Suppose that $B_\ell (y)= O_\ell^t A(O_\ell y +z_\ell) O_\ell$ for some rotation 
$O_\ell$ and $z_\ell\in \rd$.
By passing to a subsequence, we may assume that $O_\ell \to O$.
Since $u_\ell \to u$ weakly in $H^1(\Omega; \mathbb{R}^m)$ for any
$\Omega\subset\subset D(r, \phi)$,
it follows from Theorem \ref{compactness-theorem} that 
$\text{\rm div} \big(\widetilde{B} \nabla u) =0$ in $D(r, \phi)$,
where $\widetilde{B}=O^t \widehat{A} O$.
Finally, since $v_\ell (x^\prime, x_d)=u_\ell (x^\prime, x_d +\phi_\ell (x^\prime))
\rightharpoonup v(x^\prime, x_d+\phi (x^\prime))$ weakly in $H^1(D(r, 0))$ and
$ v_\ell =0$ on $I (r, 0)$, we may conclude that $v=0$ on $I(r, 0)$.
Hence, $u=0$ on $I(r, \phi)$.
\end{proof}

\begin{proof}[\bf Proof of Theorem \ref{b-h-theorem-1}]
With Lemma \ref{sequence-homo-lemma} at our disposal,
Theorem \ref{b-h-theorem-1} follows by the three-step compactness argument, as
in the periodic case. We refer the reader to \cite{AL-1987} for details.
\end{proof}

With interior and boundary H\"older estimates in Theorems \ref{holder-theorem} and \ref{boundary-holder-theorem},
one may construct an $m \times m$ matrix $ G_\varep (x,y) =\big(G^{\alpha\beta}_\varep (x,y)\big)$
of Green functions for $\mathcal{L}_\varep$ for
a bounded $C^{1, \eta}$ domain $\Omega$.
Moreover, if $d\ge 3$, 
\begin{equation}\label{Green-size}
|G_\varep (x,y)|\le C\,  |x-y|^{2-d}
\end{equation}
for any $x,y\in \Omega$, and
\begin{equation}\label{Green-holder}
|G_\varep (x,y)-G_\varep (z,y)|\le \frac{C_\sigma \, |x-z|^\sigma }{|x-y|^{d-2+\sigma}}
\end{equation}
for any $x,y,z\in \Omega$ with $|x-z|<(1/2)|x-y|$ and for any $0<\sigma<1$.
Since $G_\varep (\cdot, y)=0$ and $G_\varep (y, \cdot)=0$ on $\partial\Omega$,
one also has
\begin{equation}\label{Green-boundary}
|G_\varep (x, y)|\le   \frac{ C\,  \left[ \delta (x)\right]^{\sigma_1} \big[\delta (y) \big]^{\sigma_2} }
{|x-y|^{d-2+\sigma_1 +\sigma_2}}
\end{equation}
for any $x, y\in \Omega$ and any $0\le \sigma_1, \sigma_2<1$, where $\delta (x)=\text{dist} (x, \partial\Omega)$
and $C$ depends only on $A$, $\Omega$, $\sigma_1$ and $\sigma_2$.

\begin{theorem}\label{b-h-lemma-3}
Let $\Omega$ be a bounded $C^{1, \eta}$ domain in $\rd$ for some $\eta>0$.
Suppose that $\mathcal{L}_\varep (u_\varep) =F$ in $\Omega$
and $u_\varep =0$ on $\partial\Omega$. Then
\begin{equation}\label{b-h-3-0}
\| u_\varep \|_{C^{\alpha }(\overline{\Omega})}\le 
C_\alpha  \sup_{\substack{x\in \Omega\\ 0<r<r_0}}
r^{2-\alpha } \average_{\Omega(x, r)} |F|
\end{equation}
for any $0<\alpha<1$, where $r_0=\text{\rm diam} (\Omega)$ and
$C_\alpha $ depends only on $A$, $\Omega$, and $\alpha$.
\end{theorem}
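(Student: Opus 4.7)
The plan is to represent $u_\varep$ via the Green-function matrix $G_\varep$ of $\mathcal{L}_\varep$ on $\Omega$ just constructed,
\[
u_\varep(x) = \int_\Omega G_\varep(x,y)\,F(y)\,dy,
\]
and to extract both the $L^\infty$ and $C^\alpha$ components of the estimate directly from the size and Hölder bounds (\ref{Green-size}) and (\ref{Green-holder}) on $G_\varep$. As in the interior case of Theorem \ref{holder-theorem-1}, I will assume $d\ge 3$; the case $d=2$ is recovered by the method of ascent (adding a dummy variable). Write $N$ for the supremum on the right of (\ref{b-h-3-0}); the hypothesis then reads $\average_{\Omega(y,t)}|F|\le N\,t^{\alpha-2}$ for every $y\in\Omega$ and every $0<t<r_0$.

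The $L^\infty$ bound will follow from a dyadic decomposition of $\Omega$ into shells $A_k=\{y\in\Omega:2^{-k-1}r_0<|y-x|\le 2^{-k}r_0\}$: inserting (\ref{Green-size}) and integrating on each shell gives
\[
\int_{A_k}|G_\varep(x,y)||F(y)|\,dy\le C(2^{-k}r_0)^{2}\average_{\Omega(x,2^{-k}r_0)}|F|\le C\,N\,(2^{-k}r_0)^{\alpha},
\]
and summation in $k\ge 0$ yields $\|u_\varep\|_{L^\infty(\Omega)}\le C\,r_0^{\alpha}N$. For the Hölder seminorm, I would fix $x,z\in\overline\Omega$ with $s=|x-z|$ and split the integral $u_\varep(x)-u_\varep(z)=\int_\Omega[G_\varep(x,y)-G_\varep(z,y)]F(y)\,dy$ into a near part on $\Omega\cap(B(x,4s)\cup B(z,5s))$ and a far part on $\Omega\setminus(B(x,4s)\cup B(z,5s))$. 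The near part is controlled by $C\,s^\alpha N$ through the same shell argument applied separately to $|G_\varep(x,\cdot)|$ and $|G_\varep(z,\cdot)|$ via the size bound $C|\cdot|^{2-d}$. In the far part, every $y$ satisfies $|x-z|\le \tfrac12|x-y|$, so (\ref{Green-holder}) is available; picking any $\sigma\in(\alpha,1)$ and decomposing into shells $\{|y-x|\sim 2^{j}s\}$ with $j\ge 2$ and $2^{j}s\lesssim r_0$ produces
\[
\int_{|y-x|\sim 2^{j}s}\frac{|x-z|^{\sigma}|F(y)|}{|x-y|^{d-2+\sigma}}\,dy\le C\,s^{\sigma}(2^{j}s)^{\alpha-\sigma}N = C\,s^{\alpha}\,2^{-j(\sigma-\alpha)}N,
\]
which is summable in $j$ precisely because $\sigma>\alpha$; this contributes $C\,s^\alpha N$ as well.

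The main technical nuisance will be the boundary bookkeeping: when $x$ or $z$ lies on or near $\partial\Omega$, the balls in the near/far split are truncated to $\Omega(\cdot,r)$, but because $G_\varep(\cdot,y)$ and $G_\varep(x,\cdot)$ vanish on $\partial\Omega$ and the averages $\average_{\Omega(\cdot,r)}|F|$ are exactly what is controlled by $N$, each integrand bound survives the truncation (and in fact the refined boundary decay (\ref{Green-boundary}) is a stronger substitute that could be used in borderline cases). The crucial feature that makes this uniform treatment possible is that (\ref{Green-size}) and (\ref{Green-holder}) hold with constants that are independent of the distance from $x$ or $y$ to $\partial\Omega$, a uniformity supplied by Theorems \ref{holder-theorem-1} and \ref{boundary-holder-theorem}. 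Beyond these two Hölder estimates and the Green-function machinery, no further input appears to be required.
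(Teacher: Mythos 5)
Your proposal is correct and follows essentially the same route as the paper's proof: representation by the Green matrix, a near/far split at scale $s=|x-z|$, the size bound (\ref{Green-size}) together with the dyadic shell argument for the near part, and the H\"older bound (\ref{Green-holder}) with an auxiliary exponent $\sigma\in(\alpha,1)$ for the far part (the paper calls it $\beta$); the $L^\infty$ bound is handled in the same dyadic way. Your remarks on truncation near $\partial\Omega$ and on the irrelevance of (\ref{Green-boundary}) here are also consistent with the paper, which does not invoke the refined boundary-decay estimate for this theorem.
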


\begin{proof} Since
$$
u_\varep (x)=\int_\Omega G_\varep (x, y) F(y)\, dy,
$$
it follows that for any $x, z\in \Omega$.
$$
|u_\varep (x)-u_\varep (z)|\le \int_\Omega |G_\varep (x,y)-G_\varep (z,y)|\, |F(y)|\, dy.
$$
Let $t=|x-z|$ and write $\Omega =\big[ \Omega\setminus B(x, 4t)\big] \cup \Omega (x, 4t)$.
To estimate the integral of $|G_\varep (x, y)-G_\varep (z,y)|\, |F(y)|$ over $\Omega (x, 4t)$,
we use the estimate (\ref{Green-size}).
This gives
$$
\aligned
\int_{\Omega (x, 4t)}    |G_\varep (x, y)-G_\varep (z,y)|\, |F(y)|\, dy
& \le C \int_{\Omega(x, 4t)} \frac{|F(y)|\, dy}{|x-y|^{d-2}}
+C \int_{\Omega(z, 5t)} \frac{|F(y)|\, dy}{|z-y|^{d-2}}\\
& \le C\,  t^\alpha\sup_{\substack{x\in \Omega\\ 0<r<r_0}}
r^{2-\alpha} \average_{\Omega(x, r)} |F|.
\endaligned
$$
For the integral over $\Omega\setminus B(x, 4t)$, we choose $\beta \in (\alpha, 1)$ and use
(\ref{Green-holder}) to obtain
$$
\aligned
\int_{\Omega \setminus  B(x, 4t)}   |G_\varep (x, y)-G_\varep (z, y)|\, |F(y)|\, dy
 &\le C\,  t^\beta \int_{\Omega\setminus B(x, 4t)}
\frac{ |F(y)|\, dy}{|x-y|^{d-2 +\beta}}\\
&\le C\,  t^\alpha\sup_{\substack{x\in \Omega\\ 0<r<r_0}}
r^{2-\alpha} \average_{\Omega(x, r)} |F|.
\endaligned
$$
Thus we have proved that $|u(x)-u(z)|/|x-z|^\alpha$ is bounded by the right hand side of 
(\ref{b-h-3-0}). The remaining estimate for $\| u_\varep\|_{L^\infty(\Omega)}$ is similar.
\end{proof}

\begin{theorem}\label{b-h-lemma-4}
Let $\Omega$ be a bounded $C^{1, \eta}$ domain in $\rd$ for some $\eta>0$.
Suppose that $\mathcal{L}_\varep (u_\varep) =\text{\rm div} (f)$ in $\Omega$
and $u_\varep =0$ on $\partial\Omega$. Then
\begin{equation}\label{b-h-4-0}
\| u_\varep \|_{C^{\alpha}(\overline{\Omega})}\le 
C_\alpha\,  \sup_{\substack{x\in \Omega\\ 0<r<r_0}}
r^{1-\alpha} \left( \average_{\Omega(x, r)} |f|^2\right)^{1/2}
\end{equation}
for any $0<\alpha<1$, where $r_0=\text{\rm diam}(\Omega)$ and
$C_\alpha $ depends only on $A$, $\Omega$, and $\alpha$.
\end{theorem}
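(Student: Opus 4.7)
The plan is to mirror the proof of Theorem \ref{b-h-lemma-3}, but since the source is in divergence form, the Green's function representation now reads
$$u_\varep(x) = -\int_\Omega \nabla_y G_\varep(x,y) \cdot f(y)\, dy.$$
Consequently the argument must be driven by $L^2$-averaged bounds on $\nabla_y G_\varep(x,y)$ instead of the pointwise size and H\"older bounds (\ref{Green-size})--(\ref{Green-holder}) used previously.

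The first step is to record, for $x,z\in\Omega$ and $R\ge 4|x-z|$,
$$\left(\average_{\{y\in\Omega:\, R\le |y-x|\le 2R\}} |\nabla_y G_\varep(x,y)|^2\, dy\right)^{1/2} \le \frac{C}{R^{d-1}},$$
$$\left(\average_{\{y\in\Omega:\, R\le |y-x|\le 2R\}} |\nabla_y\{G_\varep(x,y)-G_\varep(z,y)\}|^2\, dy\right)^{1/2} \le \frac{C_\beta\, |x-z|^\beta}{R^{d-1+\beta}}$$
for every $\beta\in(0,1)$. These follow from Caccioppoli's inequality in the $y$-variable: the function $G_\varep(x,\cdot)$ coincides (up to transpose) with the Green's function for $\mathcal{L}_\varep^*$ in its first argument, so $\mathcal{L}^*_\varep G_\varep(x,\cdot)=0$ away from $x$ and $G_\varep(x,\cdot)=0$ on $\partial\Omega$. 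Feeding the pointwise estimates (\ref{Green-size}) and (\ref{Green-holder}) into Caccioppoli on (possibly truncated) balls of radius $R/8$ centered on the annulus produces the two displays; the boundary version of Caccioppoli covers the case when the annulus meets $\partial\Omega$, and the supplementary bound (\ref{Green-boundary}) ensures no deterioration there.

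With these estimates in hand, the proof follows the same structure as Theorems \ref{holder-theorem-1} and \ref{b-h-lemma-3}. Given $x,z\in\overline\Omega$ with $t=|x-z|$, split
$$|u_\varep(x)-u_\varep(z)| \le \int_\Omega |\nabla_y G_\varep(x,y) - \nabla_y G_\varep(z,y)|\,|f(y)|\,dy$$
across $\Omega(x,4t)$ and $\Omega\setminus B(x,4t)$, and decompose each piece into dyadic annuli $\{y\in\Omega : |y-x|\sim 2^j t\}$. On the near region I apply Cauchy--Schwarz annulus-by-annulus against the size bound above; on the far region I do the same against the H\"older bound with a fixed $\beta\in(\alpha,1)$. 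In both cases the dyadic sum is a convergent geometric series controlled by
$$C_\alpha\, t^\alpha \sup_{\substack{x\in\Omega\\ 0<r<r_0}} r^{1-\alpha}\left(\average_{\Omega(x,r)} |f|^2\right)^{1/2}.$$
The companion bound for $\|u_\varep\|_{L^\infty(\Omega)}$ is obtained by the same dyadic Cauchy--Schwarz argument using only the size estimate on $\nabla_y G_\varep$.

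The main technical point is justifying the gradient estimates on $\nabla_y G_\varep$ uniformly in the position of $x$ and $z$ relative to $\partial\Omega$: one must be sure the Caccioppoli step is not spoiled when the annulus $\{R\le |y-x|\le 2R\}\cap\Omega$ abuts the boundary. This is where the $C^{1,\eta}$ regularity of $\partial\Omega$ and the vanishing trace of $G_\varep(x,\cdot)$ on $\partial\Omega$ enter, but the verification is routine given (\ref{Green-size})--(\ref{Green-boundary}), and everything past that point is a dyadic summation exactly parallel to Theorem \ref{b-h-lemma-3}.
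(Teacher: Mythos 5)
Your proposal matches the paper's proof essentially step for step: both start from the Green's function representation $u_\varep(x)=-\int_\Omega \nabla_y G_\varep(x,y)\cdot f(y)\,dy$, both use Caccioppoli's inequality in the $y$-variable (exploiting $\mathcal{L}^*_\varep G_\varep(x,\cdot)=0$ away from $x$ with vanishing trace on $\partial\Omega$) to convert the pointwise size and H\"older bounds \eqref{Green-size}--\eqref{Green-holder} into $L^2$-averaged bounds on $\nabla_y G_\varep$ over dyadic annuli, and both then run the near/far dyadic summation exactly as in Theorems \ref{holder-theorem-1} and \ref{b-h-lemma-3}. Your remarks on boundary Caccioppoli when the annulus abuts $\partial\Omega$ merely make explicit what the paper leaves implicit; there is no genuine departure in strategy.
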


\begin{proof}
The proof is similar to that of Theorem \ref{b-h-lemma-3}, using
$$
|u_\varep (x)-u_\varep (z)|
\le \int_\Omega |\nabla _y \big\{ G_\varep (x, y)-G_\varep (z, y)\big\}|\, |f (y)|\, dy.
$$
The lack of point-wise estimates for $\nabla_y G_\varep (x, y)$ is overcome by using the following  estimates:
\begin{equation}\label{b-h-4-1}
\aligned
& \int_{r\le |y-x|\le 2r} |\nabla_y G_\varep (x,y)|^2\, dy
\le \frac{C}{r^2} \int_{(r/2)\le |y-x|\le 3r} |G_\varep (x,y)|^2\, dy,\\
& \int_{R\le |y-x|\le 2R} 
|\nabla_y \big\{ G_\varep (x,y) -G_\varep (z,y)\big\}|^2\, dy\\
 &\qquad\qquad \qquad 
 \le \frac{C}{R^2}
\int_{(R/2)\le |y-x|\le 3R} 
| G_\varep (x,y) -G_\varep (z,y)|^2\, dy,
\endaligned
\end{equation}
where $|x-z|<(1/4)|x-y|$. Estimate (\ref{b-h-4-1}) follows from Cacciopoli's inequality.
We omit the rest of the proof.
\end{proof}

\begin{theorem}\label{b-h-theorem-10}
Let $\Omega$ be a bounded $C^{1, \eta}$ domain in $\rd$ for some $\eta>0$.
Suppose that $\mathcal{L}_\varep (u_\varep) =0$ in $\Omega$ and $u_\varep =g$ on $\partial\Omega$.
Then
\begin{equation}\label{b-h-10-0}
\| u_\varep\|_{C^\alpha (\overline{\Omega})} \le C_\alpha\, \| g\|_{C^{\alpha} (\partial\Omega)}
\end{equation}
for any $0<\alpha<1$, where $C_\alpha$ depends only on $A$, $\Omega$, and $\alpha$.
\end{theorem}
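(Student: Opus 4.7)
The plan is to reduce Theorem \ref{b-h-theorem-10} to Theorem \ref{b-h-lemma-4} by writing $u_{\varep}$ as the sum of a Hölder extension of $g$ into $\Omega$ and a function with zero boundary data. Concretely, I would first construct a Whitney-type extension $G\in C^{\alpha}(\overline{\Omega})\cap C^{\infty}(\Omega)$ of $g$ satisfying
\begin{equation*}
G|_{\partial\Omega}=g,\qquad \|G\|_{C^{\alpha}(\overline{\Omega})}\le C\|g\|_{C^{\alpha}(\partial\Omega)},\qquad |\nabla G(x)|\le C\|g\|_{C^{\alpha}(\partial\Omega)}\,\delta(x)^{\alpha-1},
\end{equation*}
where $\delta(x)=\text{\rm dist}(x,\partial\Omega)$. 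The difference $w_{\varep}:=u_{\varep}-G$ then vanishes on $\partial\Omega$ and weakly solves $\mathcal{L}_{\varep}(w_{\varep})=\text{\rm div}(f)$ in $\Omega$ with $f:=A(\cdot/\varep)\nabla G$.

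The main computation is to verify that $f$ meets the Morrey-type hypothesis of Theorem \ref{b-h-lemma-4}:
\begin{equation*}
\sup_{\substack{x\in\Omega\\ 0<r<r_{0}}}r^{1-\alpha}\left(\average_{\Omega(x,r)}|f|^{2}\right)^{1/2}\le C\|g\|_{C^{\alpha}(\partial\Omega)}.
\end{equation*}
Via the pointwise bound on $\nabla G$, this reduces to controlling $\int_{\Omega(x,r)}\delta(y)^{2\alpha-2}\,dy$ by a two-case analysis. In the interior case $\delta(x)\ge 2r$ one has $\delta(y)\gtrsim r$ on $B(x,r)$, and so $\average_{B(x,r)}|\nabla G|^{2}\lesssim\|g\|_{C^{\alpha}(\partial\Omega)}^{2}\,r^{2\alpha-2}$. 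In the boundary case $\delta(x)<2r$, a $C^{1,\eta}$ boundary chart combined with Fubini in tangential--normal coordinates gives
\begin{equation*}
\int_{\Omega(x,r)}\delta(y)^{2\alpha-2}\,dy\lesssim r^{d-1}\int_{0}^{Cr}t^{2\alpha-2}\,dt\lesssim r^{d+2\alpha-2}.
\end{equation*}
Either way, multiplying by $r^{1-\alpha}$ yields the required Morrey bound by $C\|g\|_{C^{\alpha}(\partial\Omega)}$. Theorem \ref{b-h-lemma-4} applied to $w_{\varep}$ then produces $\|w_{\varep}\|_{C^{\alpha}(\overline{\Omega})}\le C\|g\|_{C^{\alpha}(\partial\Omega)}$, and a triangle inequality with $\|G\|_{C^{\alpha}(\overline{\Omega})}\le C\|g\|_{C^{\alpha}(\partial\Omega)}$ closes the proof.

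The hard part is a technical obstacle that surfaces when $\alpha\le 1/2$: the pointwise bound $|\nabla G|\lesssim\delta^{\alpha-1}$ then fails to be locally square-integrable up to $\partial\Omega$ (the tubular integral $\int_{0}^{Cr}t^{2\alpha-2}\,dt$ diverges), so $f\notin L^{2}_{\text{\rm loc}}$ near the boundary and $w_{\varep}$ is not a priori in $H^{1}(\Omega)$, so Theorem \ref{b-h-lemma-4} does not apply literally. I would circumvent this by approximation: first establish the estimate for smooth boundary data $g\in C^{\infty}(\partial\Omega)$, where one works with a mollified Whitney extension whose gradient is truncated at the mollification scale so that $w_{\varep}\in H^{1}_{0}(\Omega)$ and yet the Morrey bound above remains controlled by $\|g\|_{C^{\alpha}(\partial\Omega)}$ uniformly in the mollification parameter; then approximate an arbitrary $g\in C^{\alpha}(\partial\Omega)$ in the uniform norm by such smooth data with uniformly bounded $C^{\alpha}$ norm and invoke Arzelà--Ascoli on the resulting uniform $C^{\alpha}$ estimate for the approximating solutions, identifying the limit via $L^{2}$-continuity of the Dirichlet solution operator.
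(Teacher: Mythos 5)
Your decomposition $u_\varepsilon = G + w_\varepsilon$ is essentially the same as the paper's $u_\varepsilon = v + (u_\varepsilon - v)$ (the paper takes $v$ to be the harmonic extension of $g$ rather than a Whitney extension, but both satisfy $|\nabla v|\lesssim \|g\|_{C^\alpha}\delta^{\alpha-1}$), and you correctly locate the obstruction: for $\alpha\le 1/2$ the Morrey quantity $\sup_{x,r}r^{1-\alpha}\big(\average_{\Omega(x,r)}|\nabla G|^2\big)^{1/2}$ is infinite when $|\nabla G|\sim\delta^{\alpha-1}$. However, the mollification repair you propose does not close the gap. The Morrey seminorm in Theorem~\ref{b-h-lemma-4} is lower semicontinuous under any reasonable approximation, so if the limit has infinite Morrey norm, no approximating family can have uniformly bounded Morrey norms. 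Concretely, for $\alpha<1/2$, take $G_\eta$ with $|\nabla G_\eta|\lesssim \|g\|_{C^\alpha}\min(\delta^{\alpha-1},\eta^{\alpha-1})$ as you suggest. For a boundary ball $B(x,r)$ with $r>\eta$, the Fubini computation gives
\begin{equation*}
\average_{\Omega(x,r)}|\nabla G_\eta|^2 \lesssim \|g\|_{C^\alpha}^2\, r^{-1}\Big(\eta^{2\alpha-1}+\int_\eta^r t^{2\alpha-2}\,dt\Big)\lesssim \|g\|_{C^\alpha}^2\, r^{-1}\eta^{2\alpha-1},
\end{equation*}
so that $r^{1-\alpha}\big(\average_{\Omega(x,r)}|\nabla G_\eta|^2\big)^{1/2}\lesssim \|g\|_{C^\alpha}\, r^{1/2-\alpha}\eta^{\alpha-1/2}$, which blows up as $\eta\to 0$ (and gives a log divergence at $\alpha=1/2$). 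So the bound you would feed into Theorem~\ref{b-h-lemma-4} is not uniform in $\eta$, the resulting $C^\alpha$ estimates for the approximating solutions degenerate, and the Arzel\`a--Ascoli step cannot produce the claimed conclusion. Any argument that treats $f=A(\cdot/\varepsilon)\nabla G$ only through its Morrey norm is therefore intrinsically restricted to $\alpha>1/2$.

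The paper avoids this by not invoking Theorem~\ref{b-h-lemma-4} as a black box. Instead it represents $u_\varepsilon - v$ directly as
\begin{equation*}
u_\varepsilon(x)-v(x)=-\int_\Omega \nabla_y G_\varepsilon(x,y)\,A(y/\varepsilon)\nabla v(y)\,dy
\end{equation*}
and then establishes the pointwise bound $|u_\varepsilon(x)-v(x)|\le C_\alpha\,\delta(x)^\alpha$, the key being that the Green function boundary-decay estimate (\ref{Green-boundary}) supplies an extra factor $\delta(y)^{\alpha_1}$ inside the integral. This turns $\delta(y)^{\alpha-1}$ into $\delta(y)^{\alpha+\alpha_1-2}$, and one chooses $\alpha_1\in(1-\alpha,1)$ so that $\alpha+\alpha_1>1$, which makes the boundary integral converge for \emph{every} $\alpha\in(0,1)$. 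In other words, the two-sided boundary vanishing of the Green function---not exploited by the $f$-only Morrey estimate of Theorem~\ref{b-h-lemma-4}---is precisely what rescues the case $\alpha\le 1/2$. To carry out your plan you would need to reproduce this cancellation rather than appeal to Theorem~\ref{b-h-lemma-4} with a mollified source.
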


\begin{proof}
Without loss of generality we may assume that $\| g\|_{C^\alpha(\partial\Omega)} =1$.
Let $v$ be the harmonic function in $\Omega$ such that  $v\in C (\overline{\Omega})$
and $v=g$ on $\partial\Omega$.
It is well known that $\| v\|_{C^\alpha (\overline{\Omega})} \le C_\alpha\, \| g\|_{C^\alpha(\partial\Omega)}= C_\alpha$,
where $C_\alpha$ depends only on $\alpha$ and $\Omega$.
By interior estimates for harmonic functions, one also has
\begin{equation}\label{b-h-10-1}
|\nabla v(x)|\le C_\alpha \big[\delta (x)\big]^{\alpha -1}
\end{equation}
for any $x\in \Omega$.
Since $\mathcal{L}_\varep (u_\varep -v)=-\mathcal{L}_\varep (v)$ in $\Omega$ and
$u_\varep -v=0$ on $\partial\Omega$,
it follows that
$$
u_\varep (x) -v(x) =-\int_\Omega \nabla_y G_\varep (x,y) A(y/\varep)\nabla v (y)\, dy.
$$
This, together with (\ref{b-h-10-1}), gives
\begin{equation}\label{b-h-10-3}
|u_\varep (x) -v(x)|
\le C_\alpha\,  \int_\Omega
|\nabla_y G_\varep (x,y)|\, \big[ \delta (y)\big]^{\alpha -1}\, dy.
\end{equation}
We will show that
\begin{equation}\label{b-h-10-4}
\int_\Omega |\nabla_y G_\varep (x,y)|\, \big[ \delta (y)\big]^{\alpha -1}\, dy 
\le C_\alpha \, \big[\delta (x)\big]^{\alpha}\quad \text{ for any } x\in \Omega.
\end{equation}

Assume (\ref{b-h-10-4}) for a moment. 
Then
\begin{equation}\label{b-h-10-5}
|u_\varep (x) -v (x)|\le C_\alpha \big[ \delta (x)\big]^{\alpha} \quad \text{ for any } x\in \Omega.
\end{equation}
It follows that $\| u_\varep\|_{L^\infty (\Omega)} 
\le \| v\|_{L^\infty(\Omega)} + C \le C$.
Let $x, y\in \Omega$.
To show $| u_\varep (x)-u_\varep (y)|\le C\, |x-y|^\alpha$, we consider three cases:
(1) $|x-y|<(1/4) \delta (x)$; (2)  $|x-y|<(1/4)\delta (y)$; 
(3) $|x-y|\ge \max \big((1/4)\delta(x), (1/4)\delta (y)\big)$.
In the first case, since $\mathcal{L}_\varep (u_\varep)=0$ in $\Omega$, we may use the
interior H\"older estimates in Theorem \ref{holder-theorem} to obtain
$$
|u_\varep (x)-u_\varep (y)|
\le C_\alpha\,  |x-y|^\alpha \, \| u_\varep\|_{L^\infty(B(x, \delta(x)/2))} \le C_\alpha \, |x-y|^\alpha.
$$
The second case is handled in the same manner.
For the third case we use (\ref{b-h-10-5}) and H\"older estimates for $v$  to see that
$$
\aligned
|u_\varep (x)-u_\varep (y)
&\le |u_\varep (x)-v(x)| +|v(x) -v(y)| +|v(y)-u_\varep (y)|\\
&\le C\, \big[\delta (x)\big]^\alpha +C \, |x-y|^\alpha + C \, \big[\delta (y)\big]^\alpha\\
&\le C_\alpha\, |x-y|^\alpha.
\endaligned
$$

It remains to prove (\ref{b-h-10-4}).
To this end we fix $x\in \Omega$ and let $r=\delta (x)/2$.
We first note that
\begin{equation}\label{b-h-10-7}
\aligned
\int_{B(x,r)} |\nabla_y G_\varep (x,y)|\big[\delta (y)\big]^{\alpha -1}\, dy
& \le C\,  r^{\alpha-1} \int_{B(x,r)} |\nabla_y G_\varep (x,y)|\, dy\\
&\le C\,  r^\alpha,
\endaligned
\end{equation}
where the last inequality follows from the first estimate in (\ref{b-h-4-1})
by decomposing $B(x, r)\setminus \{ 0\}$ as
$\cup_{j=0}^\infty \big\{ B(x, 2^{-j} r)\setminus B(x, 2^{-j-1} r)\big\} $.
To estimate the integral on $\Omega\setminus B(x, r)$,
we observe that if $Q$ is a cube in $\rd$ with the property that
$3Q\subset \Omega\setminus \{ x\}$ and
$\ell (Q)\sim \text{dist} (Q, \partial\Omega)$, 
then
\begin{equation}\label{b-h-10-8}
\aligned
\int_Q  & |\nabla_y G_\varep (x, y)|\big[\delta (y)\big]^{\alpha-1}\, dy
\le C\, \big[\ell (Q)\big]^{\alpha-1} |Q| \left(\average_Q |\nabla_y G_\varep (x,y)|^2\, dy\right)^{1/2}\\
&\le C\, \big[\ell (Q)\big]^{\alpha-2} |Q| \left(\average_{2Q} | G_\varep (x,y)|^2\, dy\right)^{1/2}\\
&\le C \, r^{\alpha_1} \big[\ell (Q)\big]^{\alpha+\alpha_2-2} |Q| \left(\average_{2Q} 
\frac{dy}{|x-y|^{2(d-2 +\alpha_1 +\alpha_2)}}\right)^{1/2},
\endaligned
\end{equation}
where $\alpha_1, \alpha_2 \in (0,1)$.
We remark that Cacciopoli's inequality was used for the second inequality above,
while the estimate (\ref{Green-boundary}) was used for the third.
Since $3Q\subset \Omega\setminus \{ x\}$, we see that $|x-y|\sim |x-z|$
for any $y,z\in 2Q$. As a result, it follows from (\ref{b-h-10-8}) that
\begin{equation}\label{b-h-10-9}
\int_Q |\nabla_y G_\varep (x, y)|\big[\delta (y)\big]^{\alpha-1}\, dy
\le C\, r^{\alpha_1}
\int_Q \frac{\big[\delta (y)\big]^{\alpha +\alpha_2 -2}}{|x-y|^{d-2 +\alpha_1+\alpha_2}}\, dy.
\end{equation}
By decomposing $\Omega\setminus B(x, r)$ as a non-overlapping union of cubes $Q$
with the said property (a Whitney type decomposition of $\Omega$),
we obtain 
\begin{equation}\label{b-h-10-10}
\aligned
\int_{\Omega\setminus B(x, r)} |\nabla_y G_\varep (x, y)|\big[\delta (y)\big]^{\alpha-1}\, dy
& \le C\, r^{\alpha_1}
\int_\Omega \frac{\big[\delta (y)\big]^{\alpha +\alpha_2 -2}}{( |x-y|+r)^{d-2 +\alpha_1+\alpha_2}}\, dy\\
& \le C\, r^{\alpha_1}
\int_{\mathbb{R}_+^d} \frac{ y_d^{\alpha +\alpha_2 -2} dy}{(| r-y_d| + r +|y^\prime|)^{d-2 +\alpha_1 +\alpha_2}}.
\endaligned
\end{equation}
Finally, a direct computation shows that the integral on the right hand side of (\ref{b-h-10-10})
is bounded by $Cr^{\alpha-\alpha_1}$, provided that $\alpha_1>\alpha$ and $\alpha_2>1-\alpha$.
This completes the proof.
\end{proof}

\begin{proof}[\bf Proof of Theorem \ref{main-theorem-3}]
This follows from Theorems \ref{b-h-lemma-3}, \ref{b-h-lemma-4} and \ref{b-h-theorem-10}
by writing $u_\varep =u_\varep^{(1)} +u_\varep^{(2)} +u_\varep^{(3)}$, where
$u_\varep^{(1)}$, $u_\varep^{(2)}$, $u_\varep^{(3)}$
satisfy the conditions in Theorems \ref{b-h-lemma-3}, \ref{b-h-lemma-4}, 
\ref{b-h-theorem-10}, respectively.
\end{proof}



\section{Construction of approximate correctors}
\setcounter{equation}{0}

In this section we construct the approximate correctors $\chi_T =\big (\chi_{T, j}^\beta\big)
=\big(\chi_{T, j}^{\alpha\beta}\big)$
and obtain some preliminary estimates.

\begin{prop}\label{lemma-2.1}
Let $f\in L^2_{\loc} (\rd; \mathbb{R}^m)$ and $g=(g_1, \dots, g_d)\in L^2_{\loc} (\rd; \mathbb{R}^{d\times m})$.
Assume that 
$$
\sup_{x\in \rd} \int_{B(x, 1)} \big (|f|^2 +|g|^2\big)<\infty.
$$
Then, for $T>0$,
 there exists a unique $u\in H^1_{\loc} (\rd; \mathbb{R}^m)$ such that
\begin{equation}\label{2.1-1}
-\text{\rm div} \big(A(x)\nabla u\big)  +T^{-2} u = f +\text{\rm div} (g) \quad \text{ in } \rd
\end{equation}
and 
\begin{equation}\label{2.1-2}
\sup_{x\in \rd} \int_{B(x,1)}\big( |\nabla u|^2 +|u|^2\big) <\infty.
\end{equation}
Moreover, the solution $u$ satisfies the estimate
\begin{equation}\label{2.1-3}
\sup_{x\in \rd} \average_{B(x,T)}\big( |\nabla u|^2 +T^{-2} |u|^2\big)
\le C \sup_{x\in \rd} \average_{B(x,T)} \big( |g|^2 +T^2 |f|^2\big),
\end{equation}
where $C$ depends only on $d$, $m$ and $\mu$.
\end{prop}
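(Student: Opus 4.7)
My plan is to solve the equation on large balls $B_R=B(0,R)$ via Lax--Milgram and pass to the limit, the passage being controlled by a uniform weighted energy estimate on the scale $T$ that also yields \eqref{2.1-3} and uniqueness directly. For each $R$, the bilinear form $a(u,v)=\int_{B_R} A\nabla u\cdot\nabla v+T^{-2}\int_{B_R} u\cdot v$ is continuous and coercive on $H^1_0(B_R;\mathbb{R}^m)$ by \eqref{ellipticity}, and the linear form $v\mapsto \int(f\cdot v-g\cdot\nabla v)$ is bounded. Lax--Milgram then gives a unique $u_R\in H^1_0(B_R;\mathbb{R}^m)$ weakly solving $-\text{\rm div}(A\nabla u_R)+T^{-2}u_R=f+\text{\rm div}(g)$.

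The crucial ingredient is a global weighted estimate uniform in $R$. Fix $x_0\in\rd$ and a small $\alpha>0$ to be chosen depending only on $\mu$, and set $\varphi(x)=e^{-\alpha|x-x_0|/T}$, so that $|\nabla\varphi|=(\alpha/T)\varphi$ --- the derivative of the weight is tied to the scale $T^{-1}$ of the zeroth-order term. Testing against $\varphi^2 u_R$ (Lipschitz times $H^1_0$, hence admissible) and integrating by parts produces
\[
\int\varphi^2 A\nabla u_R\cdot\nabla u_R+2\int\varphi A\nabla u_R\cdot u_R\nabla\varphi+T^{-2}\int\varphi^2|u_R|^2=\int f\cdot\varphi^2 u_R-\int g\cdot\nabla(\varphi^2 u_R).
\]
Ellipticity controls the first term; Young's inequality bounds the cross term by $\tfrac{\mu}{2}\int\varphi^2|\nabla u_R|^2+(C\alpha^2/\mu^3)T^{-2}\int\varphi^2|u_R|^2$. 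Splitting $\nabla(\varphi^2 u_R)=\varphi^2\nabla u_R+2\varphi u_R\nabla\varphi$, the $g$-term contributes a further absorbable fraction of $\mu\int\varphi^2|\nabla u_R|^2$, an $\alpha^2$-weighted fraction of $T^{-2}\int\varphi^2|u_R|^2$, and $C\int\varphi^2|g|^2$; the $f$-term yields a fixed fraction of $T^{-2}\int\varphi^2|u_R|^2$ plus $CT^2\int\varphi^2|f|^2$. Choosing $\alpha=\alpha(\mu)$ small enough that the total absorbed coefficient of $T^{-2}\int\varphi^2|u_R|^2$ is strictly less than $1$, we obtain the clean weighted bound
\[
\int_{\rd}\varphi^2(|\nabla u_R|^2+T^{-2}|u_R|^2)\le C\int_{\rd}\varphi^2(|g|^2+T^2|f|^2),
\]
with $C=C(d,m,\mu)$.

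To extract \eqref{2.1-3}, decompose $\rd$ into annular shells $S_j=\{x:jT\le|x-x_0|<(j+1)T\}$: on $S_j$ one has $\varphi^2\le e^{-2\alpha j}$, and covering $S_j$ by $\sim(1+j)^{d-1}$ balls of radius $T$ yields $\int_{S_j}(|g|^2+T^2|f|^2)\le C(1+j)^{d-1}T^d N_0$, where $N_0=\sup_y\dashint_{B(y,T)}(|g|^2+T^2|f|^2)$. Summing the resulting geometric series bounds the right-hand side by $CT^d N_0$, while on the left $\varphi^2\ge e^{-2\alpha}$ on $B(x_0,T)$. Dividing by $|B(x_0,T)|$ and taking $\sup_{x_0}$ gives \eqref{2.1-3} for $u_R$, uniformly in $R$. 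Weak $H^1_\loc$-compactness of $\{u_R\}$ along a subsequence $R\to\infty$ then produces a limit $u$ satisfying \eqref{2.1-1} together with the uniform bounds \eqref{2.1-2} and \eqref{2.1-3}; uniqueness is immediate, since the same weighted estimate applied to the difference of two solutions (with $f=g=0$) forces it to vanish.

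The main obstacle is the correct tuning of $\alpha$: the cross term produced by the weight creates an $\alpha^2/\mu^3$ multiple of $T^{-2}\int\varphi^2|u|^2$ on the right, and this, together with the fixed fractions coming from the $f$- and $g$-terms, must stay strictly below the coefficient $1$ available from the zeroth-order term on the left. An ordinary compactly supported cutoff with $|\nabla\varphi|\lesssim 1/T$ produces an unavoidable constant $C/\mu^3$ and yields only a wrong-direction hole-filling inequality that does not close; the exponential weight circumvents this by letting $\alpha$ be taken as small as needed while keeping $|\nabla\varphi|/\varphi$ exactly at the correct scale $1/T$.
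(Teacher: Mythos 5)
Your proof is correct and follows essentially the same core idea as the paper — an exponentially weighted energy estimate at scale $T$ — but packages it more self-containedly and uses it for a different piece of the argument. The paper cites Yurinskii for both existence and the estimate (\ref{2.1-3}), noting that the underlying mechanism is a weighted bound of the form $\int e^{\lambda|x|}(|\nabla u|^2+|u|^2)\le C\int e^{\lambda|x|}(|f|^2+|g|^2)$ for compactly supported data (after rescaling to $T=1$), i.e.\ a \emph{growing} weight encoding exponential decay of the solution; you instead run the dual version with the \emph{decaying} weight $e^{-\alpha|x-x_0|/T}$ directly on approximating solutions $u_R\in H^1_0(B_R)$ from Lax--Milgram and pass to the limit. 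These are two faces of the same estimate. Where you genuinely diverge is the uniqueness step: the paper iterates Caccioppoli's inequality, $\int_{B_R}|u|^2\le CR^{-2d}\int_{B_{2^dR}}|u|^2$, against the polynomial growth implied by (\ref{2.1-2}); you reuse the weighted identity with $f=g=0$. Your route has the virtue of a single unified mechanism, and your shell-summation derivation of (\ref{2.1-3}) from the weighted bound is clean and explicit, whereas the paper simply points to the reference. One small gap worth flagging: in the uniqueness step the test function $\varphi^2 w$ for a merely $H^1_{\loc}$ solution satisfying (\ref{2.1-2}) is not literally admissible; one must test with $\varphi^2\psi_R^2 w$ for a compactly supported cutoff $\psi_R$ and let $R\to\infty$, using (\ref{2.1-2}) together with the exponential decay of $\varphi$ to kill the boundary terms. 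This is routine, but "uniqueness is immediate" elides it. Also, your parenthetical claim that a plain compactly supported cutoff at scale $T$ "does not close" overstates things slightly — the paper's iterated-Caccioppoli argument shows that a compactly supported cutoff \emph{does} suffice for uniqueness (just via a different bookkeeping) — though you are right that for the uniform \emph{a priori} estimate (\ref{2.1-3}) on the approximating problems, the exponential weight with $|\nabla\varphi|/\varphi=\alpha/T$ small is what lets the cross term be absorbed without losing a power.
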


\begin{proof}
By rescaling we may assume that $T=1$.
The proof of the existence and estimate (\ref{2.1-3}) may be found in \cite{Yurinskii-1990}.
It uses the fact that for $f\in L^2 (\rd;\mathbb{R}^m)$,
$g=(g_1, \dots, g_d)\in L^2(\rd; \mathbb{R}^{d\times m})$
 with compact support, there exists a constant 
$\lambda>0$, depending only on $d$, $m$ and $\mu$, such that the solution of
(\ref{2.1-1}) in $H^1(\rd;\mathbb{R}^m)$ satisfies
$$
\int_{\rd} e^{\lambda |x|} \big\{ |\nabla u|^2 +|u|^2\big\}\, dx
\le C
\int_{\rd} e^{\lambda |x|} \big\{ |f|^2 +|g|^2\big\}\, dx.
$$

For the uniqueness, assume that $u\in H^1_{\loc} (\rd; \mathbb{R}^m)$ satisfies (\ref{2.1-2}) and
$-\text{\rm div} (A(x)\nabla u) +u=0$ in $\rd$.
By Cacciopoli's inequality,
$$
\int_{B(0, R)} |\nabla u|^2 +\int_{B(0,R)} |u|^2 \le \frac{C}{R^2} \int_{B(0,2R)} |u|^2
$$
for any $R\ge 1$. It follows that
$$
\int_{B(0, R)} |u|^2\le \frac{C}{R^{2d}} \int_{B(0, 2^d R)} |u|^2
$$
for any $R\ge 1$.
However, the condition (\ref{2.1-2}) implies that $\int_{B(0, 2^d R)} |u|^2\le C_u  R^d$.
Consequently, we obtain $\int_{B(0,R)} |u|^2\le C_u R^{-d}$ for any $R\ge 1$ and thus  $u\equiv 0$ in $\rd$.
\end{proof}

\begin{remark}\label{remark-2.0}
{\rm
The solution $u$ of (\ref{2.1-1}), given by Proposition \ref{lemma-2.1},
in fact satisfies 
\begin{equation}\label{p-estimate}
\sup_{x\in \rd}  \left\{ \average_{B(x,T)} |\nabla u|^p\right\}^{1/p}
\le C \sup_{x\in \rd} \left\{ \average_{B(x,T)} |g|^p\right\}^{1/p}
+C\, \sup_{x\in \rd} \left\{\average_{B(x,T)} T^2 |f|^2\right\}^{1/2},
\end{equation}
\begin{equation}\label{q-estimate}
\sup_{x\in \rd}
\left\{ \average_{B(x,T)} T^{-q} |u|^q\right\}^{1/q}
\le C \sup_{x\in \rd} \left\{ \average_{B(x,T)} |g|^p\right\}^{1/p}
+C\, \sup_{x\in \rd} \left\{\average_{B(x,T)} T^2 |f|^2\right\}^{1/2}
\end{equation}
for some $p>2$, depending only on $d$, $m$ and $\mu$, where $(1/q)=(1/p)-(1/d)$ for $d\ge 3$.
If $d=2$, the left hand side of (\ref{q-estimate}) should be replaced by
$ T^{-1} \|  u\|_{L^\infty}$.

To see (\ref{p-estimate}), one uses the weak reverse H\"older estimate:  if $u$ is a weak solution of
$-\text{\rm div} \big(A(x)\nabla u) =f +\text{\rm div} (g)$ in $B_r=B(x_0, r)$, then
$$
\left\{ \average_{B_{r/2}} |\nabla u|^p \right\}^{1/p}
\le \frac{C}{r} \left\{ \average_{B_r} |u|^2\right\}^{1/2}
+ C \left\{ \average_{B_r} |g|^p\right\}^{1/p}
+C\, r \left\{\average_{B_r} |f|^2 \right\}^{1/2}
$$
for some $p>2$, depending only on $d$, $m$ and $\mu$ (see e.g. \cite{Giaquinta}).
Estimate (\ref{q-estimate}) follows from (\ref{p-estimate}) by Sobolev imbedding.
}
\end{remark}

Let $P_j^\beta (x) =x_j e^\beta$, where $1\le j\le d$, $1\le \beta\le m$, and 
$e^\beta =(0, \dots, 1, \dots, 0)$ with $1$ in the $\beta^{th}$ position.
For $T >0$,
the approximate  corrector is defined as $\chi_T=\big( \chi_{T, j}^{\alpha\beta}\big) $,
where, for each $1\le j\le d$ and $1\le \beta\le m$, 
$u=\chi_{T, j}^\beta=\big (\chi_{T, j}^{1\beta}, \dots, \chi_{T, j}^{m\beta}\big)$ is the weak solution of
\begin{equation}\label{corrector-equation}
-\text{\rm div} \big(A(x)\nabla u\big) + T^{-2} u=\text{\rm div} \big(A (x)\nabla P_j^\beta\big)  \quad \text{ in }\rd,
\end{equation}
given by Proposition \ref{lemma-2.1}
It follows from (\ref{2.1-3}) that
\begin{equation}\label{c-e-2.0}
\sup_{x\in \rd} \average_{B(x, T)} \big( |\nabla \chi_T|^2 +T^{-2} |\chi_T|^2\big) \le C,
\end{equation}
where $C$ depends only on $d$, $m$ and $\mu$. Clearly, this gives 
\begin{equation}\label{corrector-estimate-2.1}
\sup_{\substack {x\in \rd\\ L\ge T} } 
\average_{B(x, L)} \big( |\nabla \chi_T|^2 +T^{-2} |\chi_T|^2\big) \le C,
\end{equation}
where $C$ depends only on $d$, $m$ and $\mu$.

\begin{lemma}\label{lemma-2.0}
Let $x,y, z\in \rd$. Then
\begin{equation}\label{2.0-1}
\aligned
&\left\{ \average_{B(x, T)} |\nabla \chi_T (t+y) -\nabla\chi_T (t+z)|^2\, dt\right\}^{1/2}
\le C\, \| A(\cdot +y) -A(\cdot +z)\|_{L^\infty(\rd)},\\
&
T^{-1}
\left\{ \average_{B(x,T)} |\chi_T(t+y)-\chi_T (t+z)|^2\, dt\right\}^{1/2}
\le C\,  \| A(\cdot +y) -A(\cdot +z)\|_{L^\infty(\rd)},
\endaligned
\end{equation}
where $C$ depends only on $d$, $m$ and $\mu$.
\end{lemma}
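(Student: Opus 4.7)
The plan is to subtract the defining equations of $\chi_T(\cdot+y)$ and $\chi_T(\cdot+z)$, interpret the difference as a solution to an auxiliary equation of the form treated in Proposition \ref{lemma-2.1}, and then read off the conclusion from the energy estimate (\ref{2.1-3}).

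Fix indices $j,\beta$ and write $u_y(t)=\chi_{T,j}^\beta(t+y)$ and $u_z(t)=\chi_{T,j}^\beta(t+z)$. A change of variable in the weak formulation of (\ref{corrector-equation}) shows that $u_y$ is a weak solution of
\[
-\text{\rm div}\bigl(A(t+y)\nabla u_y\bigr)+T^{-2}u_y=\text{\rm div}\bigl(A(t+y)\nabla P_j^\beta\bigr)\quad\text{in }\rd,
\]
and similarly for $u_z$ with $z$ in place of $y$. Subtracting the two equations and using the identity
\[
A(t+y)\nabla u_y-A(t+z)\nabla u_z
=A(t+y)\nabla(u_y-u_z)+\bigl(A(t+y)-A(t+z)\bigr)\nabla u_z,
\]
I find that $w:=u_y-u_z$ satisfies
\[
-\text{\rm div}\bigl(A(t+y)\nabla w\bigr)+T^{-2} w=\text{\rm div}(g)\quad\text{in }\rd,
\]
where $g(t)=\bigl(A(t+y)-A(t+z)\bigr)\bigl(\nabla P_j^\beta+\nabla u_z(t)\bigr)$.

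Next I apply Proposition \ref{lemma-2.1} with the coefficient matrix $B(t)=A(t+y)$ (which satisfies the same ellipticity condition and is uniformly almost-periodic), source $f=0$, and divergence datum $g$. The translation-invariance of the sup over balls, together with the bound (\ref{c-e-2.0}) on $\nabla\chi_T$, gives
\[
\sup_{x\in\rd}\average_{B(x,T)}|g|^2
\le\|A(\cdot+y)-A(\cdot+z)\|_{L^\infty}^2\cdot
\sup_{x\in\rd}\average_{B(x,T)}\bigl(|\nabla P_j^\beta|^2+|\nabla u_z|^2\bigr)
\le C\,\|A(\cdot+y)-A(\cdot+z)\|_{L^\infty}^2.
\]
Because $\chi_T$ satisfies (\ref{c-e-2.0}), so do $u_y$ and $u_z$ (by translation), and therefore $w$ satisfies (\ref{2.1-2}). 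Hence $w$ is \emph{the} solution of its equation in the sense of Proposition \ref{lemma-2.1}, and estimate (\ref{2.1-3}) applied to $w$ yields
\[
\sup_{x\in\rd}\average_{B(x,T)}\bigl(|\nabla w|^2+T^{-2}|w|^2\bigr)
\le C\,\|A(\cdot+y)-A(\cdot+z)\|_{L^\infty}^2,
\]
which is exactly the pair of inequalities claimed.

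The only delicate point is making sure that Proposition \ref{lemma-2.1} really applies to $w$: this requires $w$ to lie in the uniqueness class (\ref{2.1-2}), and it is here that one uses that \emph{both} $u_y$ and $u_z$ inherit the uniform local $H^1$ bound of $\chi_T$ via translation, so that their difference $w$ has the required regularity. Everything else is bookkeeping.
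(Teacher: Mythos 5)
Your proof is correct and follows essentially the same route as the paper: subtract the translated corrector equations to get an equation for $w$ with a divergence-form right-hand side $g=(A(\cdot+y)-A(\cdot+z))(\nabla P_j^\beta+\nabla u_z)$, then invoke Proposition \ref{lemma-2.1} and the bound (\ref{c-e-2.0}) on $\nabla\chi_T$. Your extra remark about checking that $w$ lies in the uniqueness class (\ref{2.1-2}) is a sound clarification of a step the paper leaves implicit.
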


\begin{proof} Fix $y,z\in \rd$ and  $1\le j\le d$, $1\le \beta\le m$.
Let $u(t)=\chi_{T, j}^\beta (t+y)$ and $v(t)=\chi_{T, j}^\beta (t+z)$. Then $w=u-v$ is a solution of
$$
\aligned
& -\text{\rm div} \big( A(t+y)\nabla w\big)
+T^{-2} w\\
&\qquad  =\text{\rm div} \big( [A(t+y)-A(t+z)]\nabla P_j^\beta\big)
+\text{\rm div} \big( [A(t+y)-A(t+z)] \nabla v\big).
\endaligned
$$
In view of Proposition \ref{lemma-2.1} we obtain
$$
\aligned
& \average_{B(x,T)} \big( |\nabla w|^2 +T^{-2} |w|^2\big)
\le C\,  \sup_{x\in \rd} \average_{B(x,T)} | A(t+y)-A(t+z)|^2\, dt\\
&\qquad\qquad\qquad\qquad\qquad\qquad \quad +C\, \sup_{x\in \rd} \average_{B(x,T)} 
|A(t+y)-A(t+z)|^2 |\nabla v|^2\, dt\\
&\le C \, \| A(\cdot +y)-A(\cdot +z)\|_{L^\infty}^2
+C\, \| A(\cdot +y)-A(\cdot +z)\|_{L^\infty}^2
\sup_{x\in \rd} \average_{B(x,T)} |\nabla v|^2\\
&\le C\,  \| A(\cdot +y)-A(\cdot +z)\|_{L^\infty}^2,
\endaligned
$$
where we have used (\ref{c-e-2.0}) in the last inequality.
This completes the proof.
\end{proof}

\begin{remark}\label{remark-2.3}
{\rm
 For $f\in L^2_{\loc}(\rd)$, define
\begin{equation}\label{W-norm}
\| f\|_{W^2}
=\limsup_{L\to \infty} \sup_{x\in \rd} \left\{ \average_{B(x,L)} |f|^2\right\}^{1/2}.
\end{equation}
Note that by (\ref{c-e-2.0}),
\begin{equation}\label{c-e-2.1}
\|\nabla \chi_T \|_{W^2} + T^{-1} \| \chi_T \|_{W^2} \le C,
\end{equation}
where $C$ depends only on $d$, $m$ and $\mu$. 
Moreover, by Lemma \ref{lemma-2.0}, for any $\tau\in \rd$,
\begin{equation}\label{c-e-2.2}
\aligned
& \| \nabla \chi_T (\cdot +\tau) -\nabla \chi_T (\cdot) \|_{W^2} 
+ T^{-1} \| \chi_T (\cdot +\tau)- \chi_T (\cdot)\|_{W^2}\\
&\qquad\qquad\qquad\qquad
 \le C\, \| A(\cdot +\tau) -A(\cdot)\|_{L^\infty}.
\endaligned
\end{equation}
Since $A$ is uniformly almost-periodic, for any $\varep>0$, 
the set 
$$
\left\{ \tau\in \rd:\ \| A(\cdot +\tau)-A(\cdot)\|_{L^\infty(\rd)} <\varep \right\}
$$
is relatively dense in $\rd$.
It follows that for any $\varep>0$,
the set of $\tau$ for which the left hand side of (\ref{c-e-2.2}) is less than $\varep$ is also relatively dense in $\rd$.
By \cite{Bohr} this implies that $\nabla \chi_T$ and $\chi_T$ are limits of sequences of trigonometric polynomials 
with respect to the semi-norm $\| \cdot\|_{W^2}$ in (\ref{W-norm}).
In particular, $\nabla\chi_T, \chi_T\in B^2(\rd)$ for any $T>0$.
}
\end{remark}

\begin{lemma}\label{lemma-2.1-1}
Let $u_T=\chi_{T, j}^\beta$ for some $T>0$, $1\le j\le d$ and $1\le \beta\le m$.
Then
\begin{equation}\label{mean-equation}
\big\langle a_{ik}^{\alpha\gamma} \, \frac{\partial u_T^\gamma}{\partial x_k} \,  \frac{\partial v^\alpha}{\partial x_i}\big\rangle
+T^{-2} \langle u_T\cdot v\rangle 
=-\big\langle a_{ij}^{\alpha\beta}\, \frac{\partial v^\alpha}{\partial x_i}\big\rangle ,
\end{equation}
where $v=(v^\alpha)\in H^1_\loc (\rd; \mathbb{R}^{m})$
and $v^\alpha,\, \nabla v^\alpha\in B^2(\rd)$.
\end{lemma}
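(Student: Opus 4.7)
The plan is to derive \eqref{mean-equation} by testing the distributional form of \eqref{corrector-equation} against a compactly supported cutoff of $v$ and passing to the limit. Fix $\varphi\in C_0^\infty(\rd)$ with $\varphi\equiv 1$ on $B(0,1)$, $\mathrm{supp}\,\varphi\subset B(0,2)$, and $\int\varphi>0$. For $R>0$ set $\phi_R(x):=v(x)\varphi(x/R)$, which lies in $H^1(\rd;\mathbb{R}^m)$ with compact support since $v\in H^1_\loc$; hence $\phi_R$ is admissible in the weak form of \eqref{corrector-equation}. Applying the Leibniz rule $\partial_i\phi_R^\alpha=\varphi(x/R)\partial_i v^\alpha+R^{-1}v^\alpha(\partial_i\varphi)(x/R)$ splits the resulting identity into three ``main'' integrals carrying $\varphi(x/R)$ and two ``boundary'' integrals carrying $R^{-1}(\nabla\varphi)(x/R)$.

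Next, I divide through by $R^d\int\varphi$ and let $R\to\infty$. The change of variable $y=x/R$ recasts every integral as $(\int\varphi)^{-1}\int g(Ry)\psi(y)\,dy$ for some $\psi\in C_0^\infty(\rd)$ (either $\varphi$ or a $\partial_i\varphi$) and some $g$ drawn from the list $\{a_{ik}^{\alpha\gamma}\partial_k u_T^\gamma\partial_i v^\alpha,\ u_T^\alpha v^\alpha,\ a_{ij}^{\alpha\beta}\partial_i v^\alpha,\ a_{ik}^{\alpha\gamma}\partial_k u_T^\gamma\,v^\alpha,\ a_{ij}^{\alpha\beta}v^\alpha\}$. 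By Remark \ref{remark-2.3}, both $u_T$ and $\nabla u_T$ lie in $B^2(\rd)$; by hypothesis $v,\nabla v\in B^2(\rd)$; and because $A$ is uniformly almost-periodic, Section 2 ensures that $a_{ik}^{\alpha\gamma}\partial_k u_T^\gamma$, $a_{ij}^{\alpha\beta}\partial_i v^\alpha$, and $a_{ij}^{\alpha\beta}v^\alpha$ remain in $B^2$. Consequently each $g$ above is either itself in $B^2$ or a product of two $B^2$ factors, so the mean $<g>$ exists and the defining property \eqref{mean-value} yields $\int g(Ry)\psi(y)\,dy\to<g>\int\psi$ as $R\to\infty$.

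Applied to the three main terms (with $\psi=\varphi$ and $\int\varphi\neq 0$), this produces exactly the three averages in \eqref{mean-equation}: $<a_{ik}^{\alpha\gamma}\partial_k u_T^\gamma\partial_i v^\alpha>$, $T^{-2}<u_T\cdot v>$, and $-<a_{ij}^{\alpha\beta}\partial_i v^\alpha>$. The two boundary terms carry the extra factor $R^{-1}$ in front of an integral that remains bounded (indeed, it converges to $<g>\int\partial_i\varphi=0$), and therefore vanish in the limit. Summing the three surviving limits delivers \eqref{mean-equation}. The main technical point is the $B^2$-calculus recorded in Section 2---closure of $B^2$ under multiplication by a uniformly almost-periodic function together with the existence of $<fg>$ for $f,g\in B^2$---combined with the membership $u_T,\nabla u_T\in B^2$ from Remark \ref{remark-2.3}; once these are granted the limit procedure is routine.
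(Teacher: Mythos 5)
Your proof is correct and follows essentially the same argument as the paper's (which is stated very tersely): the paper takes $\phi(x) = \varphi(\varepsilon x)v(x)$, multiplies the weak form by $\varepsilon^d$, changes variables, and lets $\varepsilon\to 0$ — this is exactly your construction with $R = 1/\varepsilon$. The details you spell out (the Leibniz split into main and boundary terms, the product calculus in $B^2$ from Section 2, and the membership $u_T,\nabla u_T\in B^2$ from Remark \ref{remark-2.3}) are precisely what makes the paper's "simple change of variables... and finally letting $\varepsilon\to 0$" rigorous, and the boundary terms vanish for the reason you give.
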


\begin{proof}
For any $\phi=(\phi^\alpha)\in H^1(\rd;\mathbb{R}^m)$ with compact support, we have
\begin{equation}\label{2.1-1-1}
\int_{\rd} a_{ik}^{\alpha\gamma}\,  \frac{\partial u_T^\gamma}{\partial x_k}\cdot
\frac{\partial \phi^\alpha}{\partial x_i}
+\frac{1}{T^2} \int_{\rd} u_T\cdot \phi
=-\int_{\rd} a_{ij}^{\alpha\beta} \, \frac{\partial \phi^\alpha}{\partial x_i}.
\end{equation}
Let $v=(v^\alpha)\in H^1_\loc (\rd; \mathbb{R}^{m})$.
Suppose that $v^\alpha\in B^2(\rd)$ and $\nabla v^\alpha\in B^2(\rd)$.
Choose $\phi (x)=\varphi (\varep x) v(x) $ in (\ref{2.1-1-1}), where $\varphi\in C_0^\infty(\rd)$.
The desired result follows 
by a simple change of variables $x\to x/\varep$ in (\ref{2.1-1-1}),
 multiplying both sides of the equation by $\varep^d$,
  and finally letting $\varep\to 0$.
\end{proof}

Letting $v$ be a constant in (\ref{mean-equation}), we see that 
\begin{equation}\label{m-c-mean}
\langle \chi_{T,j}^\beta\rangle =0.
\end{equation}
By taking $v=\chi_{T, j}^\beta$, we obtain
\begin{equation}\label{m-c-mean-1}
\langle A\nabla\chi_{T, j}^\beta \cdot \nabla \chi_{T, j}^\beta\rangle 
+T^{-2} \langle  |\chi_{T, j}^\beta|^2\rangle
=-\langle A^*\nabla \chi_{j, T}^\beta\rangle,
\end{equation}
where $A^*$ denotes the adjoint of $A$.
This, in particular, implies that
$$
\langle |\nabla \chi_T|^2\rangle  +T^{-2} \langle |\chi_T|^2\rangle  \le C,
$$
where $C$ depends only on $d$, $m$ and $\mu$.

\begin{lemma}\label{lemma-2.4}
Let $\psi= \left( \psi_{ij}^{\alpha\beta}\right)$ be defined by (\ref{c-equation}).
Then, as $T\to \infty$,
\begin{equation}\label{2.4-0-0}
\frac{\partial}{\partial x_i} \left( \chi_{T, j}^{\alpha\beta} \right)
\rightharpoonup \psi_{ij}^{\alpha\beta} \quad \text{\rm weakly  in } B^2(\rd).
\end{equation}
\end{lemma}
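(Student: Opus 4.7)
The goal is to show that every subsequential weak limit of $\nabla\chi_T$ in $B^2(\mathbb{R}^d)$ coincides with $\psi$, whence weak convergence of the full family follows. I plan first to establish uniform $B^2$-boundedness of $\nabla\chi_T$ (and of $T^{-1}\chi_T$) via (\ref{m-c-mean-1}): together with ellipticity this yields $<|\nabla\chi_T|^2> + T^{-2}<|\chi_T|^2>\le C$ with $C$ independent of $T$. Remark~\ref{remark-2.3} already places $\chi_T$ and $\nabla\chi_T$ in $B^2(\mathbb{R}^d)$, so along a subsequence $T_\ell\to\infty$ one extracts $\nabla\chi_{T_\ell} \rightharpoonup \Psi=(\Psi_{ij}^{\alpha\beta})$ weakly in $B^2$.

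Next, I would identify $\Psi$ by passing to the limit in the mean-value equation (\ref{mean-equation}) with test function $v$ a trigonometric polynomial (so $v,\nabla v\in B^2$ automatically). Since each coefficient $a_{ik}^{\alpha\gamma}\partial_i v^\alpha$ lies in $B^2$ (product of a uniformly almost-periodic function and a trigonometric polynomial), the principal term converges to $<a_{ik}^{\alpha\gamma}\Psi_{kj}^{\gamma\beta}\partial_i v^\alpha>$; the zero-order term is bounded by $T_\ell^{-1}\bigl(T_\ell^{-1}\|\chi_{T_\ell}\|_{B^2}\bigr)\|v\|_{B^2}\to 0$; and the right-hand side is independent of $T$. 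Setting $\phi=\nabla v$, which is a potential trigonometric polynomial with mean zero, and using density of such $\phi$ in $V^2_{\rm pot}$, one sees that $\Psi$ satisfies exactly the identity (\ref{c-equation}) that characterizes $\psi$.

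What remains, and is the main subtlety, is to verify $\Psi\in V^2_{\rm pot}$, so that the Lax--Milgram uniqueness in (\ref{c-equation}) forces $\Psi=\psi$. I plan to show that each $\nabla\chi_T$ already lies in $V^2_{\rm pot}$, after which weak closedness of this linear subspace in the Hilbert space $B^2$ transfers the property to $\Psi$. Using the orthogonal decomposition (\ref{decomp}), I must rule out the constant and the solenoidal components of $\nabla\chi_T$. For the mean, a boundary-layer argument using $\chi_T\in B^2$ --- choosing $r\in(R,2R)$ with $\int_{\partial B_r}|\chi_T|^2\,d\sigma\le C R^{d-1}$ by a Fubini estimate, so that $|B_R|^{-1}\int_{\partial B_r}|\chi_T|\,d\sigma\to 0$ --- gives $<\partial_i\chi_T^\alpha>=0$. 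For the solenoidal part, I would test against a solenoidal trigonometric polynomial $h$ of mean zero, integrate $|B_R|^{-1}\int_{B_R} h_i^\alpha\partial_i\chi_T^\alpha$ by parts, control the boundary term as above (using boundedness of $h$), and use $\partial_i h_i^\alpha=0$ to conclude $<h\cdot\nabla\chi_T>=0$; density of such $h$ in $V^2_{\rm sol}$ completes the orthogonality check. Closedness of $V^2_{\rm pot}$ then yields $\Psi\in V^2_{\rm pot}$, and uniqueness in (\ref{c-equation}) identifies $\Psi=\psi$. Since every weak subsequential limit equals $\psi$, the full family $\nabla\chi_T$ converges weakly in $B^2$ to $\psi$ as $T\to\infty$. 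The hardest part is the pair of orthogonality checks, which require extracting boundary estimates from the purely mean-valued information on $\chi_T$.
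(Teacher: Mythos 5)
Your proposal is correct and follows the same route the paper takes: extract a weak subsequential limit in the Hilbert space $B^2$, show it lies in $V^2_{\rm pot}$, pass to the limit in the mean-value identity (\ref{mean-equation}) using the uniform bound $T^{-2}\langle|\chi_T|^2\rangle\le C$ to kill the zero-order term, and invoke the Lax--Milgram uniqueness in (\ref{c-equation}). The only place you go beyond the paper is in spelling out the boundary-layer/Fubini argument that $\nabla\chi_T$ has vanishing constant and solenoidal components in the decomposition (\ref{decomp}); the paper simply asserts $\nabla\chi_{T,j}^\beta\in V^2_{\rm pot}$ as standard, so your elaboration is a correct supply of a step the paper leaves implicit rather than a different method.
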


\begin{proof}
Fix $1\le j\le d$ and $1\le \beta\le m$.
Let $\widetilde{\psi}_j^\beta=\left(\widetilde{\psi}_{ij}^{\alpha\beta}\right)\in B^2(\rd;\mathbb{R}^{dm})$
be the weak limit  in $B^2(\rd)$
of a subsequence $\nabla \chi_{T_\ell, j}^{\beta}$,
where $T_\ell\to \infty$.
Since $\nabla \chi_{T, j}^{\beta}\in V^2_{\rm pot}$, we see that $\widetilde{\psi}_j^\beta\in V^2_{\rm pot}$.
Moreover, since $T^{-2} \langle |\chi_T|^2\rangle \le C$,
it follows by letting $T\to \infty$ in (\ref{mean-equation}) that
$$
\big\langle a_{ik}^{\alpha\gamma}\, \widetilde{\psi}_{kj}^{\gamma\beta}\, \frac{\partial v^\alpha}{\partial x_i}\big\rangle 
=-\big\langle a_{ij}^{\alpha\beta} \, \frac{\partial v^\alpha}{\partial x_i} \big\rangle
$$
for any $v=(v^\alpha)\in {\rm Trig} (\rd; \mathbb{R}^m)$.
This implies that $\widetilde{\psi}_j^\beta$ is a solution of  (\ref{c-equation}).
 By the uniqueness of the solution  we obtain $\widetilde{\psi}_j^\beta=\psi_j^\beta$ and hence (\ref{2.4-0-0}).
\end{proof}

\begin{theorem}\label{theorem-2.5}
As $T\to \infty$, $T^{-2}\langle |\chi_T|^2\rangle \to 0$.
\end{theorem}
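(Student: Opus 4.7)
The plan is to combine the energy identity \eqref{m-c-mean-1} for the approximate corrector with the defining equation \eqref{c-equation} for the exact corrector $\psi$, then pass to the limit $T\to\infty$ using the weak convergence $\nabla \chi_T\to\psi$ in $B^2(\rd)$ from Lemma \ref{lemma-2.4} together with the weak lower semicontinuity of the coercive quadratic form associated with $A$.

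Fix $1\le j\le d$ and $1\le \beta\le m$, and write $u_T=\chi_{T,j}^\beta$. The proof of Lemma \ref{lemma-2.4} already records that $\nabla u_T\in V^2_{\rm pot}$, so I may take $\phi=\nabla u_T$ as a test function in \eqref{c-equation} to obtain
$$
-<a_{ij}^{\alpha\beta}\,\partial_i u_T^\alpha>=<a_{ik}^{\alpha\gamma}\,\psi_{kj}^{\gamma\beta}\,\partial_i u_T^\alpha>.
$$
Substituting this into the right-hand side of \eqref{m-c-mean-1} produces the clean identity
$$
<a_{ik}^{\alpha\gamma}\,\partial_k u_T^\gamma\,\partial_i u_T^\alpha>+T^{-2}<|u_T|^2>=<a_{ik}^{\alpha\gamma}\,\psi_{kj}^{\gamma\beta}\,\partial_i u_T^\alpha>.
$$

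Next I let $T\to\infty$. Lemma \ref{lemma-2.4} gives $\partial_i u_T^\alpha\to \psi_{ij}^{\alpha\beta}$ weakly in $B^2(\rd)$, and since the coefficient $a_{ik}^{\alpha\gamma}\psi_{kj}^{\gamma\beta}$ is a fixed element of $B^2(\rd)$, the right-hand side converges to $<a_{ik}^{\alpha\gamma}\psi_{kj}^{\gamma\beta}\psi_{ij}^{\alpha\beta}>$. On the first term of the left-hand side, the quadratic form $Q(\xi):=<a_{ik}^{\alpha\gamma}\xi_k^\gamma\xi_i^\alpha>$ is continuous on $B^2(\rd;\mathbb{R}^{dm})/\!\!\sim$ and, by \eqref{ellipticity}, satisfies $Q(\xi)\ge\mu<|\xi|^2>\ge 0$. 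Expanding the pointwise inequality $Q(\nabla u_T-\psi_j^\beta)\ge 0$ (and using weak convergence in the linear cross term, for which $\eta\mapsto <a_{ik}^{\alpha\gamma}\psi_{kj}^{\gamma\beta}\eta_i^\alpha>$ is a bounded functional on $B^2$) yields
$$
\liminf_{T\to\infty}<a_{ik}^{\alpha\gamma}\,\partial_k u_T^\gamma\,\partial_i u_T^\alpha>\ \ge\ <a_{ik}^{\alpha\gamma}\psi_{kj}^{\gamma\beta}\psi_{ij}^{\alpha\beta}>.
$$
Combined with the identity above, this forces $\limsup_{T\to\infty}T^{-2}<|u_T|^2>\le 0$; since the quantity is nonnegative, $T^{-2}<|\chi_{T,j}^\beta|^2>\to 0$, and summing over $j,\beta$ gives the theorem.

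The only delicate step is the weak lower semicontinuity of $Q$. This is the standard Hilbert-space fact that a nonnegative continuous quadratic form on a Hilbert space is weakly lower semicontinuous, applied on the Hilbert space $B^2(\rd;\mathbb{R}^{dm})/\!\!\sim$ to the symmetric bilinear form $(\xi,\eta)\mapsto\tfrac12<(a_{ik}^{\alpha\gamma}+a_{ki}^{\gamma\alpha})\xi_k^\gamma\eta_i^\alpha>$, which generates $Q$ and is coercive by \eqref{ellipticity}. I do not anticipate further obstacles.
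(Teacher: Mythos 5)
Your proof is correct and rests on the same pillars as the paper's: the energy identity \eqref{m-c-mean-1}, the weak convergence $\nabla \chi_T \rightharpoonup \psi$ in $B^2(\rd)$ from Lemma \ref{lemma-2.4}, the defining equation \eqref{c-equation} for $\psi$, and the nonnegativity of the quadratic form $\xi \mapsto \langle a_{ik}^{\alpha\gamma}\xi_k^\gamma\xi_i^\alpha\rangle$ coming from \eqref{ellipticity}. The one noticeable difference is in the weak lower semicontinuity step: the paper establishes $\liminf_{T} Q(\nabla \chi_{T,j}^\beta) \ge Q(\psi_j^\beta)$ by way of a two-parameter Cauchy--Schwarz inequality \eqref{2.5-3} between $\chi_T$ and $\chi_S$ followed by successive limits $T\to\infty$ and then $S\to\infty$, whereas you obtain the same inequality more directly by expanding $Q(\nabla\chi_{T,j}^\beta - \psi_j^\beta) \ge 0$ and passing to the weak limit in the two cross terms (each linear in $\nabla\chi_T$ with a fixed $B^2$ coefficient). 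You also invoke \eqref{c-equation} at the outset, with $\phi = \nabla\chi_{T,j}^\beta$, rather than at the very end with $\phi = \psi_j^\beta$ as the paper does, but this is a cosmetic reordering that produces the same cancellation. Both routes are valid; yours is a somewhat more streamlined presentation of the identical idea.
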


\begin{proof} 
Note that
$$
\aligned
\mu \langle |\psi-\nabla \chi_T|^2\rangle
& \le \big\langle
a_{ik}^{\alpha\gamma} \left\{ \psi_{kj}^{\gamma\beta} -\frac{\partial}{\partial x_k} \left(\chi_{T, j}^{\gamma\beta}\right)\right\}
\left\{ \psi_{ij}^{\alpha\beta} -\frac{\partial}{\partial x_i} \left( \chi_{T, j}^{\alpha\beta} \right)\right\}\big\rangle\\
&
=\langle a_{ik}^{\alpha\beta}\psi_{kj}^{\gamma\beta} \psi_{ij}^{\alpha\beta}\rangle
-\big\langle a_{ik}^{\alpha\gamma}\frac{\partial}{\partial x_k} \left(\chi_{T, j}^{\gamma\beta}\right) \psi_{ij}^{\alpha\beta}\big\rangle
-T^{-2} \langle |\chi_{T}|^2\rangle ,
\endaligned
$$
where we have used equations (\ref{c-equation}) and (\ref{mean-equation}).
In view of Lemma \ref{lemma-2.4} this implies that as $T\to \infty$,
$T^{-2}\langle |\chi_T|^2\rangle \to 0$, and
\begin{equation}\label{strong-convergence}
\| \psi -\nabla \chi_T \|_{B^2} \to 0.
\end{equation}
\end{proof}

\begin{remark}\label{remark-2.5}
{\rm 
For $T>0$, let
\begin{equation}\label{m-homo-coefficient}
\widehat{a}^{\alpha\beta}_{T, ij}
=\langle a_{ij}^{\alpha\beta}\rangle
+\big\langle a_{ik}^{\alpha\gamma} \frac{\partial }{\partial x_k} \left( \chi_{T, j}^{\gamma\beta}\right)\big\rangle
\end{equation}
be the approximate homogenized coefficients. Then
\begin{equation}\label{homo-coef-diff}
\aligned
|\widehat{a}_{ij}^{\alpha\beta}
-\widehat{a}_{T, ij}^{\alpha\beta}|
 &=|\big\langle 
 a_{ik}^{\alpha\gamma} \left\{ \psi_{kj}^{\gamma\beta}-\frac{\partial}{\partial x_k}
\left(\chi_{T,j}^{\gamma\beta}\right)\right\}\big\rangle |\\
&\le C \, \| \psi-\nabla \chi_T\|_{B^2}.
\endaligned
\end{equation}
}
\end{remark}


\section{Estimates of approximate correctors}

\setcounter{equation}{0}

In this section we will establish sharp estimates for approximate correctors $\chi_T$.
The proof relies on the uniform $L^\infty$ and H\"older estimates obtained in Section 3
for solutions of $\mathcal{L}_\varep (u_\varep) =f +\text{\rm div} (g)$.

\begin{lemma}\label{lemma-5.1}
For $T\ge 1$, 
\begin{equation}\label{5.1-0}
\| \chi_T\|_{L^\infty(\rd)}  \le C\, T,
\end{equation}
where $C$ is independent of $T$. Moreover, for any $0<\sigma <1$ and $|x-y|\le T$,
\begin{equation}\label{5.1-0-0}
|\chi_T (x)-\chi_T (y)|\le C_\sigma \, T^{1-\sigma} |x-y|^\sigma,
\end{equation}
where $C_\sigma$ depends only on $\sigma$ and $A$.
\end{lemma}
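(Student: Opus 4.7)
The plan is to apply the uniform $L^\infty$ and H\"older estimates from Theorem~\ref{holder-theorem-1} directly to $\chi_T$, viewed as a solution in $\rd$ of
\[
\mathcal{L}_1(\chi_T) \;=\; -T^{-2}\chi_T + \text{\rm div}\bigl(A(x)\nabla P_j^\beta\bigr),
\]
on balls whose radius is chosen carefully in terms of $T$. The key observation is that in \eqref{L-infty-estimate} the zeroth-order source $-T^{-2}\chi_T$ contributes, on a ball $B(x_0,r)$, a term bounded by
\[
C\, r^\sigma \sup_{t<r} t^{2-\sigma}\cdot T^{-2}\, \|\chi_T\|_{L^\infty(B(x_0, 2r))} \;\le\; C (r/T)^2\, \|\chi_T\|_{L^\infty(B(x_0, 2r))},
\]
so choosing $r = \delta T$ with $\delta$ small renders the coefficient as small as desired.

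For \eqref{5.1-0}, apply \eqref{L-infty-estimate} on $B(x_0, \delta T)$ with $f = -T^{-2}\chi_T$ and $g = A\nabla P_j^\beta$. Using the crude bound $\average_{B(x_0, 2\delta T)}|\chi_T|^2 \le C T^2\delta^{-d}$ deduced from \eqref{c-e-2.0}, together with $\|A\nabla P_j^\beta\|_\infty \le C$, one obtains
\[
\|\chi_T\|_{L^\infty(B(x_0, \delta T))} \;\le\; CT\delta^{-d/2} + C\delta^2\, \|\chi_T\|_{L^\infty(B(x_0, 2\delta T))} + C\delta T.
\]
Taking the supremum over $x_0 \in \rd$ and fixing $\delta$ so small that $C\delta^2 < 1/2$ absorbs the middle term into the left-hand side, yielding $\|\chi_T\|_{L^\infty(\rd)} \le CT$.

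With the $L^\infty$ bound in hand, \eqref{5.1-0-0} follows by applying \eqref{holder-estimate-1} on $B = B(x_0, 2T)$. Given $x, z \in \rd$ with $|x-z| \le T$, one chooses $x_0 = (x+z)/2$, so that $x, z \in B(x_0, T/2)$. Each of the three terms on the right-hand side of \eqref{holder-estimate-1} then contributes $O(T^{1-\sigma})$: the first via $r^{-\sigma}\bigl(\average_{B(x_0, 4T)}|\chi_T|^2\bigr)^{1/2} \le C T^{-\sigma}\cdot T$ using \eqref{corrector-estimate-2.1}; the second via $\sup_{t<2T} t^{2-\sigma}\cdot T^{-2}\|\chi_T\|_\infty$; and the third via $\sup_{t<2T} t^{1-\sigma}\|A\|_\infty$. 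Summing yields $|\chi_T(x) - \chi_T(z)| \le C_\sigma T^{1-\sigma}|x-z|^\sigma$.

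The main obstacle is legitimizing the absorption in the first step: one must know a priori that $\|\chi_T\|_{L^\infty(\rd)}$ is finite before rearranging the inequality. This is secured either through standard elliptic regularity (which, combined with the uniform $L^2$-average bound \eqref{c-e-2.0}, yields a crude global $L^\infty$ bound that depends polynomially on $T$ but is independent of $x_0$), or via the $L^q$ estimate \eqref{q-estimate} of Remark~\ref{remark-2.0} with some $q > 2$ depending on $(d, m, \mu)$, iterated if necessary; for $d = 2$, Remark~\ref{remark-2.0} already delivers $L^\infty$ directly. Once this crude finiteness is established, the absorption closes and produces the sharp bound $\|\chi_T\|_{L^\infty(\rd)} \le CT$.
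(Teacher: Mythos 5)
Your proof is correct in substance but takes a genuinely different route from the paper's for the $L^\infty$ bound \eqref{5.1-0}. The paper does not work with $\chi_T$ directly: instead it sets $u(x)=\chi_{T,j}^\beta(x)+P_j^\beta(x-z)$, so that $\mathrm{div}\bigl(A\nabla u\bigr)=T^{-2}\chi_{T,j}^\beta$ with \emph{no} divergence-form source, and then iterates the $L^2\!\to\! L^q$ improvement \eqref{4.7-2} on balls of radius $\sim T$ centered at $z$, using $\bigl(\average_{B(z,4T)}|u|^2\bigr)^{1/2}\le CT$ from \eqref{c-e-2.0} as the starting point. Because each application of \eqref{4.7-2} takes the $L^2$ norm of $u$ on the right and returns an $L^q$ norm on the left, and because the source $f=T^{-2}\chi_T$ satisfies $\bigl(\average|f|^p\bigr)^{1/p}\le CT^{-1}$ once $\bigl(\average|u|^p\bigr)^{1/p}\le CT$ is known, the iteration is a genuine bootstrap: it produces ever-higher integrability of $u$ from lower, with no a priori finiteness required, and closes at $L^\infty$ via \eqref{L-infty-estimate}. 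Your approach applies \eqref{L-infty-estimate} directly to $\chi_T$ with both $f=-T^{-2}\chi_T$ and $g=A\nabla P$ present, and performs an absorption on balls of radius $\delta T$. This works, but the rearrangement requires knowing $\|\chi_T\|_{L^\infty(\mathbb{R}^d)}<\infty$ before subtracting, which you rightly flag as the main obstacle. Your appeal to "standard elliptic regularity" (iterated local $W^{1,p}$/Calder\'on--Zygmund bounds, valid since $A$ is uniformly continuous, giving a crude $T$-dependent but $x_0$-uniform $L^\infty$ bound) does close the gap. However, your alternative route via \eqref{q-estimate} "iterated if necessary" does not: the right-hand side of \eqref{q-estimate} involves only $f$ and $g$, not $u$, so there is nothing to plug back in, and the Meyers exponent $p$ there is fixed slightly above $2$ and cannot be raised. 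For $d\ge 3$ that estimate alone does not reach $L^\infty$. So either lean on the elliptic-regularity route for the a priori bound, or adopt the paper's $u=\chi_T+P$ substitution which eliminates the need for absorption entirely. Your treatment of the H\"older estimate \eqref{5.1-0-0} via \eqref{holder-estimate-1} on a ball of radius $T$ is the same as the paper's and is correct.
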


\begin{proof}
We consider the case $d\ge 3$. The 2-d case follows by the method of ascending.

Let $1\le j\le d$ and $1\le \beta\le m$. Fix $z\in \rd$ and consider the function 
\begin{equation}\label{5.1-2-1}
u(x)=\chi_{T, j}^\beta (x) +P_j^\beta (x-z).
\end{equation}
It follows from (\ref{c-e-2.0}) that
\begin{equation}\label{5.1-1}
\left\{ \average_{B(z, 4T)}
|u|^2\right\}^{1/2} \le C \, T.
\end{equation}
Since
\begin{equation}\label{5.1-2}
\text{\rm div} \big( A(x)\nabla u\big) =T^{-2} \chi_{T,j}^\beta \qquad \text{ in } \rd,
\end{equation}
we may apply the estimate (\ref{4.7-2}) repeatedly to show that
\begin{equation}\label{5.1-3}
\left\{ \average_{B(z, 2T)}
|u|^p\right\}^{1/p} \le C_p \, T
\end{equation}
for any $2<p<\infty$, where $C_p$ depends only on $p$ and $A$.
This, together with (\ref{L-infty-estimate}), gives
$$
\| u\|_{L^\infty(B(z,T))} \le C\, T.
$$
Hence, $|\chi_{T, j}^\beta (z)|\le C\, T$ for any $z\in \rd$.
Finally, estimate (\ref{5.1-0-0}) follows from (\ref{5.1-0}) and the H\"older estimate (\ref{holder-estimate-1}).
\end{proof}

\begin{lemma}\label{lemma-5.1-1}
Let $\sigma_1, \sigma_2 \in (0,1)$ and $2<p<\infty$.
Then, for any $1\le  r\le T$,
\begin{equation}\label{L-p-estimate}
\sup_{x\in \rd}
\left(\average_{B(x,r)} |\nabla \chi_T|^p\right)^{1/p}
\le C\, T^{\sigma_1}  \left(\frac{T}{r}\right)^{\sigma_2},
\end{equation}
where $C$ depends only on $p$, $\sigma_1$, $\sigma_2$, and $A$.
\end{lemma}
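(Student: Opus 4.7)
The plan is to reduce to an $L^2$ gradient estimate via Caccioppoli's inequality, then bootstrap to $L^p$ via a reverse H\"older inequality followed by an iteration.

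For the $L^2$ estimate, I would fix $\sigma \in (0,1)$ and $1 \le r \le T$, and apply Caccioppoli to the equation
\[
-\text{\rm div}(A\nabla \chi_T) + T^{-2}\chi_T = \text{\rm div}(A\nabla P_j^\beta),
\]
testing against $\eta^2(\chi_T - c)$, where $\eta$ is a standard cutoff on $B(x, 2r)$ equal to one on $B(x, r)$ with $|\nabla \eta| \le C/r$, and $c = (\chi_T)_{B(x, 2r)}$. The H\"older estimate (\ref{5.1-0-0}) gives $\|\chi_T - c\|_{L^\infty(B(x, 2r))} \le C_\sigma T^{1-\sigma}(2r)^\sigma$, the zeroth-order term is controlled by $T^{-2}\|\chi_T\|_{L^\infty} \le CT^{-1}$ (Lemma \ref{lemma-5.1}), and $|A\nabla P_j^\beta| \le C$. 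After absorbing the cross terms, this should yield
\[
\Big(\average_{B(x, r)} |\nabla \chi_T|^2\Big)^{1/2} \le C_\sigma (T/r)^{1-\sigma}, \qquad 1 \le r \le T,\ \sigma \in (0,1).
\]

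Next, I would rewrite the equation as $-\text{\rm div}(A\nabla \chi_T) = F + \text{\rm div}(g)$ with $F = -T^{-2}\chi_T$ and $g = A\nabla P_j^\beta$, both uniformly bounded. The Meyers--Gehring reverse H\"older inequality, valid for divergence-form systems with only $L^\infty$ coefficients, yields
\[
\Big(\average_{B(x, r/2)} |\nabla \chi_T|^{p_0}\Big)^{1/p_0} \le C\Big(\average_{B(x, r)} |\nabla \chi_T|^2\Big)^{1/2} + Cr\|F\|_{L^\infty} + C\|g\|_{L^\infty} \le C_\sigma (T/r)^{1-\sigma}
\]
for some $p_0 > 2$ depending only on $d$, $m$, $\mu$. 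Choosing $\sigma = 1 - \sigma_2$ then settles all $p \in (2, p_0]$ with the bound $C_{\sigma_2}(T/r)^{\sigma_2}$, well within the target $CT^{\sigma_1}(T/r)^{\sigma_2}$ (no $T^{\sigma_1}$ slack is even needed in this range).

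For $p > p_0$, I plan to iterate. Rescaling via $v(y) = r^{-1}\chi_T(x + ry)$ on $B(0,1)$ gives a uniformly elliptic equation with $L^\infty$ data of bounded size, to which the higher-integrability estimate (\ref{4.7-2}) of Remark \ref{remark-4.1} applies and improves the integrability of $v$ itself by $2/d$ in the exponent per round. Feeding this improved integrability into the reverse H\"older step pushes the integrability of $\nabla v$ up by a fixed amount at each iteration; finitely many rounds therefore reach any finite $p$. The compounding constants are absorbed by the slack factor $T^{\sigma_1} \ge 1$, using $r \le T$. \emph{The main obstacle} is precisely this passage from the Meyers exponent $p_0$ (dictated by ellipticity alone) to arbitrary $p \in (2, \infty)$: in the periodic case Avellaneda--Lin's uniform Lipschitz estimates would trivialize it, but in the almost-periodic setting only H\"older regularity is available, so the iteration must be arranged so that the $T^{\sigma_1}$ slack in the statement precisely absorbs the losses accumulated along the way.
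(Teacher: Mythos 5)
Your Caccioppoli step to obtain the $L^2$ bound $(\average_{B(x,r)}|\nabla\chi_T|^2)^{1/2}\le C_\sigma(T/r)^{\sigma}$ for $0<r\le T$ agrees with the paper's (\ref{5.2-1}), and your Meyers--Gehring step covers $2<p\le p_0$ correctly. The problem is the iteration you propose for $p>p_0$: it does not work. The Meyers exponent $p_0$ is determined by ellipticity alone and is not improved by knowing that the solution $u$ itself lies in $L^q$ for large $q$. The estimate (\ref{4.7-2}) from Remark \ref{remark-4.1} raises the integrability of $u$, not of $\nabla u$, and there is no back-coupling mechanism that then pushes $\nabla u$ past $p_0$ for a divergence-form system with merely $L^\infty$ coefficients. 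Your iteration would stall at the Meyers exponent; the $T^{\sigma_1}$ slack in the statement cannot compensate for an exponent that never improves.

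The paper's argument avoids this by exploiting that $A$ is \emph{uniformly continuous} in $\rd$ (a consequence of uniform almost periodicity), so that the classical local $W^{1,p}$ theory for elliptic systems with continuous coefficients applies for \emph{all} $p\in(2,\infty)$ --- but only at unit scale, where the modulus of continuity of $A$ is fixed. This gives, from (\ref{5.2-1}) with $r\sim 1$ together with $T^{-2}\|\chi_T\|_{L^\infty}\le CT^{-1}$,
\[
\sup_{z\in\rd}\Big(\average_{B(z,1)}|\nabla\chi_T|^p\Big)^{1/p}\le C_{p,\sigma}\,T^\sigma
\]
for every $\sigma\in(0,1)$ and $p\in(2,\infty)$. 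One then covers $B(x,r)$ ($1\le r\le T$) by $O(r^d)$ unit balls to obtain the same bound $C_{p,\sigma}T^\sigma$ on $(\average_{B(x,r)}|\nabla\chi_T|^p)^{1/p}$, and finally interpolates between this $L^q$ bound (for some $q>p$) and the $L^2$ bound $(T/r)^\sigma$ to get $C\,T^{\sigma_1}(T/r)^{\sigma_2}$, by choosing $\sigma$ small relative to $\sigma_1,\sigma_2$. In short, the missing ingredient in your argument is the use of continuity of $A$ to get full-range $W^{1,p}$ at the fixed scale $r=1$, followed by covering and interpolation; no iterative self-improvement is involved.
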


\begin{proof}
Let $u$ be the same as in the proof of Lemma \ref{lemma-5.1}.
By Cacciopoli's inequality,
$$
\average_{B(z,r)} |\nabla u|^2  \le Cr^{-2} \average_{B(z,2r)} |u-u(z)|^2 + C r^2 \| T^{-2}\chi_T\|^2_{L^\infty},
$$
where $0<r\le T$.
In view of (\ref{5.1-0}) and (\ref{5.1-0-0}), this gives
\begin{equation}\label{5.2-1}
\sup_{z\in \rd} \left(\average_{B(z,r)} |\nabla \chi_T|^2\right)^{1/2}
\le C_\sigma \left(\frac{T}{r} \right)^{\sigma}
\end{equation}
for any $\sigma \in (0,1)$ and $0< r\le T$.
Since $A$ is uniformly continuous in $\rd$, by the local $W^{1,p}$ estimates for elliptic systems
in divergence form, it follows from (\ref{5.1-2}) that
$$
\left(\average_{B(z,1)} |\nabla u|^p\right)^{1/p}
\le
C_p \left(\average_{B(z,2)} |\nabla u|^2\right)^{1/2}
+ C\, T^{-2} \| \chi_T\|_{L^\infty},
$$
for any $z\in \rd$ and $2<p<\infty$,
where $C_p$ depends only on $p$ and $A$.
This, together with (\ref{5.2-1}), yields
$$
\sup_{z\in \rd}
\left(\average_{B(z,1)} |\nabla \chi_T|^p \right)^{1/p}
\le C_{p, \sigma} \, T^{\sigma}
$$
for any $\sigma\in (0,1)$ and $p\in (2,\infty)$.
Consequently,  for any $1\le r \le T$ and $\sigma\in (0,1)$, 
\begin{equation}\label{L-p-estimate-1}
\sup_{z\in \rd}
\left(\average_{B(z,r)} |\nabla \chi_T|^p \right)^{1/p}
\le C_{p, \sigma} \, T^{\sigma}.
\end{equation}
The desired estimate (\ref{L-p-estimate}) now follows from 
(\ref{5.2-1}) and (\ref{L-p-estimate-1}) by a simple interpolation
of $L^p$ norms.
\end{proof}


\begin{theorem}\label{theorem-5.3}
Let $T\ge 1$. The approximate corrector $\chi_T$ is uniformly almost-periodic in $\rd$.
 Moreover, for any $y, z\in \rd$,
\begin{equation}\label{5.3-0}
\| \chi_T (\cdot +y) -\chi_T (\cdot +z)\|_{L^\infty(\rd)}
\le C\, T\, \| A(\cdot + y) -A (\cdot +z)\|_{L^\infty(\rd)},
\end{equation}
where $C$ is independent of $T$ and $y,z$.
\end{theorem}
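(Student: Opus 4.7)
The uniform almost-periodicity of $\chi_T$ is a direct consequence of the quantitative estimate. By Bohr's characterization (already invoked in Remark \ref{remark-2.3}), a bounded continuous function is uniformly almost-periodic iff for every $\delta>0$ its set of $\delta$-almost-periods is relatively dense in $\rd$; since $\chi_T$ is bounded by Lemma \ref{lemma-5.1}, continuous by (\ref{5.1-0-0}), and the quantitative bound maps every $(CT)^{-1}\delta$-almost-period of $A$ to a $\delta$-almost-period of $\chi_T$, it suffices to prove the displayed inequality.

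Fix $(j,\beta)$, set $v(t)=\chi_{T,j}^\beta(t+z)$, $w(t)=\chi_{T,j}^\beta(t+y)-v(t)$, and write $\kappa=\|A(\cdot+y)-A(\cdot+z)\|_{L^\infty(\rd)}$. Subtracting the defining equations (\ref{corrector-equation}) for the two translates gives
$$
-\text{\rm div}\big(A(\cdot+y)\nabla w\big)+T^{-2}w=\text{\rm div}(G),\qquad G=[A(\cdot+y)-A(\cdot+z)]\big(\nabla P_j^\beta+\nabla v\big),
$$
with $|G|\le \kappa(1+|\nabla v|)$ pointwise. Lemma \ref{lemma-2.0} already supplies the averaged $L^2$ bound
$$
\sup_{x\in \rd}\Big(\average_{B(x,T)}|w|^2\Big)^{1/2}\le C\, T\,\kappa. \qquad (\star)
$$
The remaining task is to upgrade $(\star)$ to an $L^\infty$ bound of the same order.

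Since $A(\cdot+y)\in \mathcal{A}$, the uniform estimate (\ref{L-infty-estimate}) applies to $w$ on any ball $B(x_0,R)\subset \rd$ (with $\varep=1$, the operator being $-\text{\rm div}(A(\cdot+y)\nabla)$). Fix any $\sigma\in(0,1)$ and apply (\ref{L-infty-estimate}) with $R=\lambda T$ for a small constant $\lambda\in(0,1/2)$ to be chosen; write the equation as $-\text{\rm div}(A(\cdot+y)\nabla w)=-T^{-2}w+\text{\rm div}(G)$, and set $M=\|w\|_{L^\infty(\rd)}$, finite by Lemma \ref{lemma-5.1}. The three right-hand-side terms of (\ref{L-infty-estimate}) are bounded respectively as follows. (a) Since $B(x_0,2R)\subset B(x_0,T)$, one has $(\average_{B(x_0,2R)}|w|^2)^{1/2}\le \lambda^{-d/2}(\average_{B(x_0,T)}|w|^2)^{1/2}\le C\lambda^{-d/2}T\kappa$ by $(\star)$. (b) Using the crude bound $(\average_{B(y',t)}|w|^2)^{1/2}\le M$, the source $-T^{-2}w$ contributes at most $CR^\sigma\cdot R^{2-\sigma}T^{-2}M=C\lambda^2 M$. (c) For the $G$-term, combine $|G|\le\kappa(1+|\nabla v|)$ with the interior gradient bound $(\average_{B(y',t)}|\nabla v|^2)^{1/2}\le C_\tau(T/t)^\tau$ from (\ref{5.2-1}), valid for $0<t\le T$ and any $\tau\in(0,1)$; choosing $\tau\in(0,1-\sigma)$ and executing a short computation yields the bound $C_\tau T\kappa$, with the emerging $\lambda^{1-\tau}$ factor harmless. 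Taking the supremum over $x_0$:
$$
M\le C\lambda^{-d/2}T\kappa + C\lambda^2 M + C_\tau T\kappa.
$$
Fixing $\lambda$ so that $C\lambda^2\le 1/2$ and absorbing yields $M\le C T\kappa$, proving the claim.

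The main obstacle is precisely the self-referential $C\lambda^2 M$ term, which arises from the zeroth-order part $T^{-2}w$ of the equation. At the natural scale $R=T$ the corresponding constant is merely $O(1)$, not small, so a direct application of (\ref{L-infty-estimate}) is circular; localizing to $B(x_0,\lambda T)$ gains the absorbable factor $\lambda^2$ at the benign cost of inflating the $L^2$-average term by $\lambda^{-d/2}$, both fine once $\lambda$ is a fixed constant depending only on $d,m,\mu,\sigma$ and $A$. The $G$-term estimate leans on the interior gradient bound (\ref{5.2-1}), itself proved via the uniform H\"older machinery of Section 3.
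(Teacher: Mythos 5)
Your argument is correct, and it reaches (\ref{5.3-0}) by a genuinely different mechanism than the paper. The paper writes $u=\chi_T(\cdot+y)-\chi_T(\cdot+z)$, represents it on a ball of radius $T$ via the fundamental solution $\Gamma^y$ of $-\text{\rm div}(A(\cdot+y)\nabla)$, treats the $\nabla v$ term by a dyadic decomposition against $\nabla_t\Gamma^y$ using (\ref{5.2-1}), and then upgrades the $L^2$ bound of Lemma \ref{lemma-2.0} to $L^\infty$ through the inequality (\ref{5.3-5}) and a finite bootstrap of integrability via fractional integration (the kernel $T^{-2}|x-t|^{2-d}$ on a ball of radius $T$ has operator norm $O(1)$, so no smallness is needed, only finitely many integrability-improving steps). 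You instead invoke the interior estimate (\ref{L-infty-estimate}) once, at the shrunken scale $r=\lambda T$, and absorb the zeroth-order contribution $T^{-2}w$ using the factor $\lambda^2$ together with the a priori finiteness $M=\|w\|_{L^\infty(\rd)}\le 2\|\chi_T\|_{L^\infty}\le CT$ from Lemma \ref{lemma-5.1}; the translated coefficients $A(\cdot+y)$ are admissible because the constants in Section 3 are uniform over the class $\mathcal{A}$ and over ball centers (the paper relies on the same fact when it uses $\Gamma^y(x,t)=\Gamma(x+y,t+y)$). Your treatment of the $G$-term via (\ref{5.2-1}) with $\tau<1-\sigma$ is the exact analogue of the paper's dyadic sum, and your exponent bookkeeping ($\lambda^{-d/2}T\kappa$, $\lambda^2M$, $C_\tau T\kappa$, then fix $\lambda$ with $C\lambda^2\le 1/2$) is sound, with $\lambda$ depending only on $d$, $\sigma$ and $A$, hence the final constant is independent of $T,y,z$; the almost-periodicity of $\chi_T$ then follows from (\ref{5.3-0}) by Bohr's relative-density criterion exactly as you say. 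What each route buys: the paper's Green-function iteration needs no a priori sup bound and stays within the fundamental-solution machinery it has already built (handling $d=2$ by ascent), while your absorption argument is shorter, avoids $\Gamma^y$ and the iteration entirely, and applies verbatim in all dimensions since it only cites the stated estimates — but it hinges essentially on the crude bound $\|\chi_T\|_{L^\infty}\le CT$ being available beforehand, so it could not be run at a stage where that finiteness is unknown.
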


\begin{proof} We assume $d\ge 3$.
The case $d=2$ follows from the case $d=3$ by the method of ascending.
Fix $y, z\in \rd$ and $1\le j\le d$, $1\le \beta\le m$.
Let $$
u (x) =\chi_{T, j}^\beta (x+y) -\chi_{T, j}^\beta (x+z).
$$
Note that
\begin{equation}\label{5.3-1}
\aligned
-\text{\rm div} \big( A(x+y)\nabla u\big)
=-T^{-2} u  &+\text{\rm div} \big[ \big(A(x+y)-A(x+z)\big) \nabla P_j^\beta\big]\\
&+\text{\rm div} \big[ \big(A(x+y)-A(x+z)\big)\nabla v\big],
\endaligned
\end{equation}
where $v (x)=\chi_{T,j}^\beta (x+z)$.
Let $B=B(x_0, T)$. As in the proof of Theorem \ref{holder-theorem-1}, we choose a cut-off function
$\varphi\in C_0^\infty(B(x_0, 7T/4))$ such that $\varphi=1$ in $B(x_0, 3T/2)$ and $|\nabla \varphi|\le C\,T^{-1}$.
Using the representation formula by fundamental solutions and (\ref{5.3-1}), we obtain, for any $x\in B$,
\begin{equation}\label{5.3-3}
\aligned
|u(x)| \le C \,T^{-2}  & \int_{2B} |\Gamma^y (x, t)| \, |u(t)|\, dt\\
& +C \| A(\cdot+y)-A(\cdot +z)\|_{L^\infty}\int_{2B} |\nabla_t \big( \Gamma^y (x,t)\varphi (t) \big)|\, dt\\
& +C \| A(\cdot+y)-A(\cdot +z)\|_{L^\infty}
\int_{2B}|\nabla v(t)|\,  |\nabla_t \big( \Gamma^y (x,t)\varphi (t)\big)|\, dt\\
& +C \, T \left(\average_{2B} |\nabla u|^2\right)^{1/2}
+C \left(\average_{2B} |u|^2\right)^{1/2},
\endaligned
\end{equation}
where we have used $\Gamma^y (x,t)=\Gamma (x+y, t+y)$ to denote the matrix of fundamental solutions for the
operator $-\text{div} \big( A(\cdot +y) \nabla \big)$ in $\rd$.
By Lemma \ref{lemma-2.0} the last two terms in the right hand side of (\ref{5.3-3})
are bounded by the right hand side of (\ref{5.3-0}).
Using the size estimate (\ref{size-estimate}) and Cacciopoli's inequality, it is also not hard to see that
the second term in the right hand side of (\ref{5.3-3}) is bounded by the right hand side of (\ref{5.3-0}).

To treat the third term in the right hand side of (\ref{5.3-3}), we note that
$$
\aligned
& \int_{2B} |\nabla v(t)| \, |\nabla_t \big(\Gamma^y (x,t)\varphi (t)\big)|\, dt\\
&\le C \sum_{\ell=0}^\infty
\left(\average_{|t-x|\sim 2^{-\ell} T} |\nabla v(t)|^2\, dt\right)^{1/2}
\left(\average_{|t-x|\sim 2^{-\ell} T} |\nabla_t \big(\Gamma^y (x,t) \varphi\big)|^2\, dt \right)^{1/2}
(2^{-\ell} T)^d\\
&\le C \sum_{\ell=0}^\infty
(2^\ell )^{\sigma} \cdot  (2^{-\ell} T)^{1-d} \cdot (2^{-\ell} T)^d\\
&\le C\, T,
\endaligned
$$
where $\sigma \in (0,1)$ and
we have used (\ref{5.2-1}) to estimate the integral involving $|\nabla v (t)|^2$
for the second inequality.
As a result, we have proved that for any $x\in B$,
\begin{equation}\label{5.3-5}
|u(x)| \le C\, T^{-2} \int_{2B} \frac{|u(t)|}{|x-t|^{d-2}}\, dt +
C\, T\, \| A(\cdot +y) -A(\cdot +z)\|_{L^\infty}.
\end{equation}
By the fractional integral estimates, this implies that
$$
\left(\average_B |u|^q\right)^{1/q}
\le C \left(\average_{2B} |u|^p \right)^{1/p} 
+C\, T\, \| A(\cdot +y) -A(\cdot +z)\|_{L^\infty},
$$
where $1<p<q\le \infty$ and $(1/p)-(1/q)< (2/d)$.
Since 
$$
 \left(\average_{2B} |u|^2 \right)^{1/2} \le  
C\, T\, \| A(\cdot +y) -A(\cdot +z)\|_{L^\infty}
$$
by Lemma \ref{lemma-2.0}, a simple iteration argument shows that
$$
\| u\|_{L^\infty(B)} 
\le C\,  T\, \| A(\cdot +y) -A(\cdot +z)\|_{L^\infty}.
$$
This completes the proof.
\end{proof}

\begin{remark}\label{remark-5.1}
{\rm 
Let $u(x)=\chi_{T} (x+y)-\chi_{T} (x+z)$, as in the proof of Theorem \ref{theorem-5.3}.
Then
\begin{equation}\label{remark-5.1-0}
|u(t)-u(s)|
\le C_\sigma \left(\frac{|t-s|}{T}\right)^\sigma T\, \| A(\cdot +y)-A(\cdot +z)\|_{L^\infty},
\end{equation}
for any  $\sigma \in (0,1)$ and $t,s\in \rd$, where $C_\sigma$ depends only on $\sigma$ and $A$.
This follows from (\ref{5.3-1}), (\ref{5.3-0}) and (\ref{holder-estimate-1}).
By Cacciopoli's inequality and (\ref{remark-5.1-0}) we may deduce that
\begin{equation}\label{remark-5.1-1}
\sup_{x\in \rd}
\left(\average_{B(x,r)} |\nabla u|^2\right)^{1/2}
\le C_\sigma \left(\frac{T}{r}\right)^\sigma \| A(\cdot +y)-A(\cdot +z)\|_{L^\infty}
\end{equation}
for any $\sigma \in (0,1)$.
}
\end{remark}

\begin{theorem}\label{theorem-5.4}
Let $T\ge 1$. Then
\begin{equation}\label{5.4-0}
T^{-1}\|\chi_T\|_{L^\infty(\rd)}
\le C_\sigma \left\{ \rho (R) +\left(\frac{R}{T}\right)^\sigma \right\}
\end{equation}
for any $R>0$ and $\sigma \in (0,1)$, where $C_\sigma$ depends only on $\sigma$ and $A$.
In particular, $T^{-1} \|\chi_T\|_{L^\infty(\rd)} \to 0$, as $T\to \infty$.
\end{theorem}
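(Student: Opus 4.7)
The plan is to combine the two continuity estimates already established in this section---the H\"older bound of Lemma~\ref{lemma-5.1} and the ``translation modulus'' of Theorem~\ref{theorem-5.3}---with the fact that each scalar component of $\chi_T$ is a continuous uniformly almost-periodic function with mean value zero, and hence must vanish at some point. Once a vanishing point is in hand, the two continuity estimates bound the $L^\infty$ norm of each component in terms of the supplied data $\rho(R)$ and $(R/T)^\sigma$.

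First I would prove the following pointwise reduction: for every $x \in \rd$, every $0 < R \le T$, every $\sigma \in (0,1)$, and every index $(j,\alpha,\beta)$,
\begin{equation*}
\| \chi_{T,j}^{\alpha\beta} \|_{L^\infty(\rd)} \le |\chi_{T,j}^{\alpha\beta}(x)| + C_\sigma T \left(\frac{R}{T}\right)^\sigma + CT\, \rho(R).
\end{equation*}
Fixing $y \in \rd$, the definition of $\rho(R)$ applied to $y - x$ and then translated by $x$ (the relevant $L^\infty$ norm of $A$ is translation invariant) produces $z \in \rd$ with $|z - x| \le R$ and $\|A(\cdot + y) - A(\cdot + z)\|_{L^\infty(\rd)} \le \rho(R)$. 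Evaluating (\ref{5.3-0}) at the origin gives $|\chi_T(y) - \chi_T(z)| \le CT \rho(R)$, while $|z - x| \le R \le T$ combined with the H\"older estimate (\ref{5.1-0-0}) gives $|\chi_T(z) - \chi_T(x)| \le C_\sigma T (R/T)^\sigma$. A componentwise triangle inequality followed by a supremum in $y$ yields the displayed bound.

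Next I would invoke the mean-zero property. Each scalar component $f = \chi_{T,j}^{\alpha\beta}$ is continuous (by (\ref{5.1-0-0})), uniformly almost-periodic (by Theorem~\ref{theorem-5.3}), and satisfies $<f> = 0$ (equation (\ref{m-c-mean})). A standard argument then produces a point $x^* \in \rd$ with $f(x^*) = 0$: if $f$ were of constant sign and not identically zero, the relative density of its $\varepsilon$-almost-periods combined with uniform continuity would force the set $\{|f| > c\}$ to have positive lower density for some $c > 0$, contradicting $<f> = 0$; hence $f$ must change sign, and the intermediate value theorem applied along a joining segment produces a zero. Substituting $x = x^*$ into the previous estimate, component by component, proves (\ref{5.4-0}) when $R \le T$; the case $R > T$ is trivial because then $(R/T)^\sigma \ge 1$ and $T^{-1} \|\chi_T\|_{L^\infty} \le C$ by (\ref{5.1-0}). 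The ``in particular'' statement then follows by choosing $R = \sqrt{T}$: this gives $T^{-1} \|\chi_T\|_{L^\infty} \le C_\sigma \{\rho(\sqrt{T}) + T^{-\sigma/2}\}$, both of whose terms tend to zero as $T \to \infty$, since $A$ is uniformly almost-periodic and hence $\rho(L) \to 0$ as $L \to \infty$.

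The main conceptual point---and the essential new ingredient beyond the two previous estimates---is the vanishing-point argument using mean zero. Without it, the two continuity estimates only control differences of $\chi_T$ between different points, not $\chi_T$ itself; the mean-zero property is what converts these soft continuity statements into a genuine $L^\infty$ decay rate, paralleling the ``cell problem'' analyses for Hamilton--Jacobi equations in the almost-periodic setting cited earlier in the paper.
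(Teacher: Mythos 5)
Your proof is correct, and it achieves exactly the stated estimate with constants depending only on $\sigma$ and $A$; but your route from the oscillation bound to the $L^\infty$ bound is genuinely different from the paper's. Both you and the paper start from the same pair of inequalities: Theorem~\ref{theorem-5.3} gives $|\chi_T(y)-\chi_T(z)| \le CT\,\|A(\cdot+y)-A(\cdot+z)\|_{L^\infty}$, and Lemma~\ref{lemma-5.1} gives $|\chi_T(z)-\chi_T(x)| \le C_\sigma T^{1-\sigma}|x-z|^\sigma$; optimizing over $z$ in a ball of radius $R$ then controls the oscillation of $\chi_T$. The difference is how the mean-zero property (\ref{m-c-mean}) is used to kill the ``free'' value $|\chi_T(x)|$. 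You find a vanishing point: each scalar component is continuous, uniformly almost-periodic, mean-zero, and (you argue) a nonnegative continuous uniformly almost-periodic function with zero mean must vanish identically, so every nonzero component changes sign and has a zero by IVT. That claim is true, but the parenthetical ``standard argument'' is not entirely soft --- it needs relative density of the $\varepsilon$-almost-period set together with uniform continuity to produce a set of positive lower density where $|f|>c$, and a disjointification (e.g.\ a checkerboard selection of cubes of side comparable to the inclusion length) to turn relative density into a genuine lower bound on averages. The paper sidesteps this entirely with a two-line averaging argument: since $<\chi_T>=0$,
\begin{equation*}
|\chi_T(0)| \le \left|\average_{B(0,L)}\{\chi_T(y)-\chi_T(0)\}\,dy\right| + \left|\average_{B(0,L)}\chi_T(y)\,dy\right|
\le \sup_{y}|\chi_T(y)-\chi_T(0)| + \left|\average_{B(0,L)}\chi_T\right|,
\end{equation*}
and letting $L\to\infty$ makes the last term vanish, giving $|\chi_T(0)| \le \sup_y|\chi_T(y)-\chi_T(0)|$ directly from the definition of the mean value. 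This is more economical and requires no existence of a zero, no IVT, and no density estimate; it uses $<\chi_T>=0$ as a purely quantitative input rather than a topological one. Your argument is correct and has the same structure otherwise (including the trivial dispatch of $R>T$ via~(\ref{5.1-0}) and the choice $R=\sqrt T$ for the ``in particular'' statement), so the net effect of your choice is simply to replace a two-line computation by an appeal to a separate lemma about almost-periodic functions.
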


\begin{proof}
Let $y, z\in \rd$. Suppose $|z|\le R$.
Then
$$
\aligned
|\chi_T (y) -\chi_T (0)|
&\le |\chi_T (y) -\chi_T (z)| +|\chi_T (z)-\chi_T (0)|\\
&\le C\, T\, \| A(\cdot +y) -A(\cdot +z)\|_{L^\infty(\rd)}
+ C_\sigma \, T^{1-\sigma} R^\sigma,
\endaligned
$$
where we have used Theorem \ref{theorem-5.3} and Lemma \ref{lemma-5.1}.
It follows that
\begin{equation}\label{5.4-1}
\sup_{y\in \rd}
 T^{-1} |\chi_T (y) -\chi_T (0)|
 \le C\,  \rho(R) + C_\sigma \left(\frac{R}{T}\right)^\sigma
 \end{equation}
 for any $R>0$.
 
 Finally, we observe that
 $$
 \aligned
 |\chi_T (0)| & \le \left| \average_{B(0, L)} \big\{ \chi_T (y) -\chi_T (0)\big\}\, dy \right|
 +\left|\average_{B(0,L)}  \chi_T (y)\, dy \right|\\
 &\le \sup_{y\in \rd} |\chi_T (y) -\chi_T (0)| + 
\left|\average_{B(0,L)} \chi_T (y)\, dy \right|.
\endaligned
$$
Since $\langle \chi_T\rangle =0$, we may let $L\to \infty$  in the estimate above to obtain 
$$
|\chi_T (0)|\le 
\sup_{y\in \rd} |\chi_T (y) -\chi_T (0)|.
$$
This, together with (\ref{5.4-1}), yields the estimate (\ref{5.4-0}).
\end{proof}

For $T\ge 1$ and $\sigma>0$, define
\begin{equation}\label{Phi}
\Theta_\sigma (T) =\inf_{0<R\le T} 
\left\{ \rho (R) +\left(\frac{R}{T}\right)^\sigma\right\}.
\end{equation}
Note that $\Theta_\sigma (T)$ is a decreasing and continuous function of $T$ 
and $\Theta_\sigma (T)\to 0$ as $T\to \infty$.
It follows from Theorem \ref{theorem-5.4} that
\begin{equation}\label{5.5-1}
T^{-1} \| \chi_T\|_{L^\infty(\rd)} \le C_\sigma \, \Theta_\sigma (T)  \quad \text{ for any } T\ge 1,
\end{equation}
where $\sigma \in (0,1)$.
By taking $R=T^\alpha$ for some $\alpha\in (0,1)$ in (\ref{Phi}), we see that
\begin{equation}\label{5.5-2}
\Theta_\sigma (T) \le \rho (T^\alpha) + T^{-\sigma (1-\alpha)}.
\end{equation}
This, in particular, implies that
$$
\text{ if } \int_1^\infty \frac{\rho(r)}{r}\, dr<\infty, \text{ then } \int_1^\infty \frac{\Theta_\sigma (r)}{r}\, dr<\infty.
$$

\begin{theorem}\label{theorem-5.6}
Let $T\ge 1$. Then
\begin{equation}\label{5.6-0}
\left( \big\langle |\psi-\nabla \chi_T|^2\big\rangle \right)^{1/2}
\le C_\sigma \int_{T/2}^\infty \frac{\Theta_\sigma (r)}{r}\, dr
\end{equation}
for  $\sigma \in (0,1)$, where $C_\sigma$ depends only on $\sigma$ and $A$.
\end{theorem}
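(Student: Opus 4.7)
The strategy is to telescope $\psi - \nabla \chi_T$ along dyadic scales. Since $\nabla \chi_T \to \psi$ strongly in $B^2(\rd)$ by (\ref{strong-convergence}), we may write
\[
\psi - \nabla \chi_T \,=\, \sum_{k=0}^\infty \bigl( \nabla \chi_{2^{k+1} T} - \nabla \chi_{2^k T} \bigr) \quad \text{in } B^2(\rd),
\]
so it suffices to prove the one-scale Cauchy-type estimate $\| \nabla \chi_{2T} - \nabla \chi_T \|_{B^2} \le C_\sigma \, \Theta_\sigma(T)$ for every $T \ge 1$, and then sum geometrically and compare with the integral.

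\textbf{One-scale estimate.} Fix $T \ge 1$, set $S = 2T$, and put $w = \chi_S - \chi_T$. By Remark \ref{remark-2.3} (and linearity) both $w$ and $\nabla w$ lie in $B^2(\rd)$, so Lemma \ref{lemma-2.1-1} applies with $w$ as the test field. Subtracting the mean-value formulations of (\ref{corrector-equation}) for $\chi_T$ and $\chi_S$ gives, for any admissible $v$,
\[
<A\nabla w \cdot \nabla v> \,+\, T^{-2} <\chi_T \cdot v> \,-\, S^{-2} <\chi_S \cdot v> \,=\, 0.
\]
Choosing $v = w$ and writing $\chi_T = \chi_S + w$ produces
\[
<A\nabla w \cdot \nabla w> \,+\, T^{-2} <|w|^2> \,=\, (S^{-2} - T^{-2}) <\chi_S \cdot w>.
\]
Cauchy--Schwarz followed by Young's inequality (absorbing $\tfrac{1}{2}T^{-2} <|w|^2>$ on the left), together with the ellipticity (\ref{ellipticity}), yields
\[
<|\nabla w|^2> \,\le\, C \, T^2 (T^{-2} - S^{-2})^2 \, <|\chi_S|^2>.
\]
With $S = 2T$ the prefactor equals $\tfrac{9}{16} T^{-2}$, while Theorem \ref{theorem-5.4} together with monotonicity of $\Theta_\sigma$ gives $<|\chi_{2T}|^2> \le \|\chi_{2T}\|_{L^\infty(\rd)}^2 \le C_\sigma T^2 \Theta_\sigma(T)^2$. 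Hence $\|\nabla w\|_{B^2} \le C_\sigma \Theta_\sigma(T)$.

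\textbf{Summation and integral comparison.} The triangle inequality in $B^2(\rd)$ now gives
\[
\|\psi - \nabla \chi_T\|_{B^2} \,\le\, C_\sigma \sum_{k=0}^\infty \Theta_\sigma(2^k T),
\]
and since $\Theta_\sigma$ is decreasing, $(\ln 2)\, \Theta_\sigma(2^k T) \le \int_{2^{k-1} T}^{2^k T} \Theta_\sigma(r)\, \frac{dr}{r}$, so summation yields exactly the stated bound with lower limit $T/2$.

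\textbf{Anticipated obstacle.} The only delicate point is the Young's inequality bookkeeping in the one-scale step: the cross term $(S^{-2} - T^{-2}) <\chi_S \cdot w>$ must be coupled against the zeroth-order term $T^{-2} <|w|^2>$ rather than against the gradient term, for otherwise the favorable coefficient $T^2 (T^{-2} - S^{-2})^2 = O(T^{-2})$---which is precisely what cancels the $O(T^2)$ in the $L^\infty$ bound on $<|\chi_S|^2>$---is lost and one ends up with a useless factor of $T$. A routine side verification is that $<\chi_S \cdot w>$ is well defined, which follows from the Section 2 fact that products of $B^2$ functions admit a mean value.
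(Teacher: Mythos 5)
Your proof is correct and follows essentially the same route as the paper: telescope $\psi-\nabla\chi_T$ along dyadic scales, bound each increment $\nabla\chi_{2T}-\nabla\chi_T$ by testing the mean-value identity of Lemma \ref{lemma-2.1-1} against $w=\chi_{2T}-\chi_T$ and invoking the $L^\infty$ bound (\ref{5.5-1}), then compare the geometric sum with the integral using monotonicity of $\Theta_\sigma$. The small algebraic differences are benign---the paper bounds $<|w|^2>^{1/2}\le<|\chi_T|^2>^{1/2}+<|\chi_{2T}|^2>^{1/2}$ by the triangle inequality and applies Cauchy--Schwarz rather than your Young absorption, and with $w=\chi_S-\chi_T$ one has $\chi_T=\chi_S-w$ (not $\chi_S+w$), but only magnitudes matter in the estimate.
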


\begin{proof}
Fix $1\le j\le d$ and $1\le \beta\le m$.
Let $u=\chi_{T, j}^\beta$, $v=\chi_{2T, j}^\beta$, and $w=u-v$.
It follows from Lemma \ref{lemma-2.1-1} that
$$
\langle A\nabla w\cdot \nabla \varphi\rangle 
=\frac{1}{4T^2} \langle v\cdot \varphi\rangle  -\frac{1}{T^2} \langle u\cdot \varphi\rangle 
$$
for any $\varphi\in H^1_{\loc} (\rd, \mathbb{R}^m)$ with $\varphi, \nabla \varphi \in B^2(\rd)$.
By taking $\varphi=w$, we obtain 
\begin{equation}\label{5.6-1}
\aligned
\langle |\nabla w|^2\rangle 
 & \le C\, T^{-2}\left\{ \langle  |u|^2\rangle   +\langle |v|^2\rangle \right\}\\
 & \le C_\sigma \big\{ \Theta_\sigma (T) +\Theta_\sigma (2T)\big\}^2,
 \endaligned
\end{equation}
where we have used (\ref{5.5-1}) for the second inequality.
Hence, we have proved that
$$
\left( \big\langle |\nabla\chi_T -\nabla\chi_{2T}|^2\big\rangle \right)^{1/2}
\le C_\sigma \int_{T/2}^T \frac{\Theta_\sigma (r)}{r}\, dr,
$$
where we have used the fact that $\Theta_\sigma (r)$ is decreasing.
Consequently, 
\begin{equation}\label{5.6-3}
\sum_{\ell=0}^\infty \left(\langle |\nabla \chi_{2^\ell T} -\nabla\chi_{2^{\ell+1} T} |^2 \rangle \right)^{1/2}
\le C_\sigma \int_{T/2}^\infty \frac{\Theta_\sigma (r)}{r}\, dr.
\end{equation}
Recall that by (\ref{strong-convergence}),  $\langle |\psi-\nabla\chi_T|^2\rangle \to 0$ as $T\to \infty$.
The estimate (\ref{5.6-0}) now follows from (\ref{5.6-3}).
\end{proof}

\begin{remark}\label{remark-5.4}
{\rm
Suppose that there exist $C>0$ and $\tau>0$ such that
\begin{equation}\label{r-5.4-1}
\rho(R)\le {C}/{R^{\tau}}  \qquad \text{ for } R\ge 1.
\end{equation}
By taking $R=T^{\frac{\sigma}{\tau +\sigma}}$ in (\ref{5.4-0}), we obtain
$$
T^{-1} \| \chi_T\|_{L^\infty} \le C\, \Theta_\sigma (T)\le C\, T^{-\frac{\tau \sigma}{\tau +\sigma}}.
$$
Since $\sigma\in (0,1)$ is arbitrary, this shows that
\begin{equation}\label{r-5.4-2}
T^{-1}\| \chi_T\|_{L^\infty}
\le {C_\delta}\, {T^{-\frac{\tau}{\tau +1} +\delta}}
\end{equation}
for any $\delta\in (0,1)$, where $C_\delta$ depends only on $\delta$ and $A$.
Under the condition (\ref{r-5.4-1}), by Theorem \ref{theorem-5.6}, we also obtain
\begin{equation}\label{r-5.4-3}
\left(\langle |\psi-\nabla \chi_T|^2\rangle \right)^{1/2}
\le 
{C_\delta}\, {T^{-\frac{\tau}{\tau +1} +\delta}}
\end{equation}
for any $\delta\in (0,1)$.
}
\end{remark}



\section{Convergence rates}
\setcounter{equation}{0}

In this section we give the proof of Theorems \ref{main-theorem-1} and \ref{main-theorem-2}. 

\begin{lemma}\label{lemma-6.1}
Let $h\in L^2_{\loc} (\rd)$ and $T>0$.
Suppose that there exists $\sigma \in (0,1)$ such that
\begin{equation}\label{6.1-0}
\sup_{x\in \rd} \left(\average_{B(x,r)} |h|^2\right)^{1/2} \le \left(\frac{T}{r}\right)^{1-\sigma}
 \quad \text{ for any } 0<r\le T.
\end{equation}
Let $u \in  H^1_{\loc} (\rd)$ be the solution of
\begin{equation}\label{6.1-1}
-\Delta u + T^{-2} u = h \quad \text{ in } \rd,
\end{equation}
given by Proposition \ref{lemma-2.1}. Then
\begin{equation}\label{6.1-2}
\| u\|_{L^\infty} \le C\, T^2, \quad \| \nabla u\|_{L^\infty} \le C\, T,
\end{equation}
and
\begin{equation}\label{6.1-3}
|\nabla u(x)-\nabla u(y)|\le C\, T^{1-\sigma} |x-y|^\sigma\quad \text{ for any } x,y\in \rd,
\end{equation}
where $C$ depends only on $d$ and $\sigma$.
Furthermore, $ u\in H^2_{\loc} (\rd)$ and
\begin{equation}\label{6.1-3-3}
\sup_{x\in \rd} \left(\average_{B(x,T)} |\nabla^2 u|^2\right)^{1/2} \le C.
\end{equation}
\end{lemma}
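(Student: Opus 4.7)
The plan is to reduce the statement to the case $T=1$ by the scaling $v(x)=T^{-2} u(Tx)$, which converts the equation into $-\Delta v + v = \tilde h$ in $\rd$ with $\tilde h(x)=h(Tx)$; the hypothesis on $h$ translates into
\begin{equation*}
\sup_{x\in \rd}\left(\average_{B(x,r)}|\tilde h|^2\right)^{1/2}\le r^{\sigma-1}\qquad\text{for }0<r\le 1,
\end{equation*}
and in particular $\sup_x\average_{B(x,r)}|\tilde h|^2\le C$ for all $r\ge 1$. Once the estimates $\|v\|_\infty\le C$, $\|\nabla v\|_\infty\le C$, $[\nabla v]_{C^\sigma(\rd)}\le C$, and $\sup_x\average_{B(x,1)}|\nabla^2 v|^2\le C$ are known, the claims for $u$ follow by dividing back through.

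So assume $T=1$. Let $G(x)$ be the fundamental solution of $-\Delta+I$ in $\rd$ (the Bessel potential of order $2$). It is standard that $G$ is smooth away from the origin, decays exponentially at infinity, and satisfies $|G(x)|\le C|x|^{2-d}$ (or $C|\log|x||$ in $d=2$), $|\nabla G(x)|\le C|x|^{1-d}$, and $|\nabla^2 G(x)|\le C|x|^{-d}$ for $|x|\le 1$. By Proposition~\ref{lemma-2.1} the solution is unique in the class (\ref{2.1-2}), hence $u=G*\tilde h$. The plan is to estimate $|u(x)|$, $|\nabla u(x)|$, and $|\nabla u(x_1)-\nabla u(x_2)|$ by splitting the convolution into the near-field $|y-x|\le 1$ and the far-field $|y-x|>1$, dyadically decomposing the near-field into annuli $|y-x|\sim 2^{-k}$, and applying Cauchy--Schwarz together with the Morrey hypothesis on $\tilde h$. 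Typical calculations give
\begin{equation*}
\int_{|y-x|\sim 2^{-k}} |x-y|^{2-d}|\tilde h(y)|\,dy \le C(2^{-k})^{2-d}(2^{-k})^d\left(\average_{B(x,2^{-k})}|\tilde h|^2\right)^{1/2}\le C(2^{-k})^{1+\sigma},
\end{equation*}
which sums in $k\ge 0$; the gradient case produces $(2^{-k})^\sigma$, which also sums. The far-field contribution is harmless thanks to the exponential decay of $G$ combined with the volume bound $\int_{|y-x|\sim 2^k}|\tilde h|\le C 2^{kd}$ for $2^k\ge 1$.

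For the H\"older bound I will fix $x_1,x_2\in\rd$ with $s:=|x_1-x_2|\le 1$ and write $\nabla u(x_1)-\nabla u(x_2)$ as the sum of the near pieces $\int_{|y-x_1|\le 2s}\nabla G(x_i-y)\tilde h(y)\,dy$ ($i=1,2$), which are each bounded by $Cs^\sigma$ by the above dyadic sum, plus the far piece $\int_{|y-x_1|>2s}[\nabla G(x_1-y)-\nabla G(x_2-y)]\tilde h(y)\,dy$, for which the mean value theorem together with $|\nabla^2 G(z)|\le C|z|^{-d}$ gives the bound $Cs\int_{2s<|y-x_1|\le 1}|y-x_1|^{-d}|\tilde h(y)|\,dy$; dyadic decomposition here produces $Cs\sum_{2^{-k}\ge 2s}(2^{-k})^{\sigma-1}$, which is a convergent geometric series (since $\sigma-1<0$) dominated by its first term $s\cdot s^{\sigma-1}=s^\sigma$. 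The tail $|y-x_1|\ge 1$ yields $O(s)$ via the exponential decay of $\nabla^2 G$. Finally, if $s\ge 1$, the estimate $|\nabla u(x_i)|\le C$ already established gives $|\nabla u(x_1)-\nabla u(x_2)|\le C\le Cs^\sigma$.

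The $H^2$ bound (\ref{6.1-3-3}) is the easiest step: since $-\Delta u = -u+\tilde h$ on $\rd$, on each ball $B(x_0,1)$ the classical interior $L^2$ estimate for the Laplacian yields $\|\nabla^2 u\|_{L^2(B(x_0,1/2))}\le C\{\|u\|_{L^2(B(x_0,1))}+\|\tilde h\|_{L^2(B(x_0,1))}\}$, which is uniformly bounded in $x_0$ by Proposition~\ref{lemma-2.1} applied with $T=1$ and the hypothesis on $\tilde h$. Covering $B(x_0,1)$ by finitely many such half-balls produces (\ref{6.1-3-3}). The main technical point I expect to require care is the H\"older estimate, specifically verifying that the dyadic sums for the second-derivative piece converge precisely because the exponent $\sigma-1$ is negative; everything else reduces to bookkeeping with the explicit kernel $G$.
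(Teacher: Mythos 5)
Your proof is correct and follows essentially the same strategy the paper indicates: rescale to $T=1$, represent $u$ by an integral against a fundamental solution, and estimate the convolution using dyadic annuli together with the Morrey-type hypothesis on $h$. The only cosmetic difference is that you work directly with the Bessel kernel $G$ of $-\Delta+I$ on all of $\rd$ (using its exponential decay to control the far field), whereas the paper multiplies $u$ by a cutoff $\phi$ and uses the Newtonian potential of $-\Delta$ on $u\phi$, handling the commutator terms with the local $L^2$ bounds from Proposition~\ref{lemma-2.1}; both are instances of the ``standard procedure'' alluded to, and your version is if anything a little cleaner.
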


\begin{proof}
By rescaling we may assume $T=1$.
It follows from Proposition \ref{lemma-2.1}  and (\ref{6.1-0}) that
\begin{equation}\label{6.1-4}
\sup_{x\in \rd} \left(\average_{B(x, 1)} |u|^2\right)^{1/2}\le C\,  \quad \text{ and } \quad 
\sup_{x\in \rd} \left(\average_{B(x, 1)} |\nabla u|^2\right)^{1/2}\le C\, ,
\end{equation}
where $C$ depends only on $d$.
Fix $x_0\in \rd$ and let $\phi\in C_0^\infty( B(x_0,2))$ be a cut-off function 
such that $\phi=1$ in $B(x_0,1)$.
By representing 
$u\phi$ as an integral and using the fundamental solution for $-\Delta$,
the desired estimates follow from (\ref{6.1-0}) by a standard procedure.
We leave the details to the reader.
\end{proof}

Under additional almost periodicity conditions on $h$, 
the next lemma gives much sharper estimates for the solution $u$ of (\ref{6.1-1}).

\begin{lemma}\label{lemma-6.2}
Let $h\in L^2_{\loc} (\rd)$ and $T>0$.
Suppose that there exists $\sigma \in (0,1)$ such that
\begin{equation}\label{6.2-0}
\aligned
\sup_{x\in \rd} \left(\average_{B(x,r)} |h|^2\right)^{1/2}  &\le C_0 \left(\frac{T}{r}\right)^{1-\sigma},\\
\sup_{x\in \rd} \left(\average_{B(x,r)} |h((t+y)
-h(t+z)|^2\, dt \right)^{1/2}  &\le C_0 \left(\frac{T}{r}\right)^{1-\sigma}
\| A(\cdot+y)-A(\cdot +z)\|_{L^\infty}
\endaligned
\end{equation}
for any $0<r\le T$ and $y,z\in \rd$.
Let $u \in  H^1_{\loc} (\rd)$ be the solution of (\ref{6.1-1}),
given by Proposition \ref{lemma-2.1}.
Then 
\begin{equation}\label{6.2-1}
\aligned
T^{-2}\, \| u\|_{L^\infty}  &\le C\,  \, \Theta_1 (T) + \, |\langle h \rangle |,\\
T^{-1}\, \|\nabla u\|_{L^\infty} & \le C\,  \Theta_\sigma (T),
\endaligned
\end{equation}
where $\Theta_\sigma (T)$ is defined by (\ref{Phi}) and
$C$ depends at most on $d$, $\sigma$ and $C_0$.
\end{lemma}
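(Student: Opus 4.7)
The plan is to follow the template of Theorem \ref{theorem-5.4}, which handled the analogous problem for $\chi_T$, and reduce everything to the translation estimate built into hypothesis (\ref{6.2-0}) combined with the $L^\infty$/Lipschitz/H\"older bounds already supplied by Lemma \ref{lemma-6.1}. The two tasks are (a) controlling the oscillation of $u$ and $\nabla u$ via translates, and (b) pinning down the single point values $u(0)$ and $\nabla u(0)$ using mean-value information.

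First I would fix $y,z\in\rd$ and set $w(x)=u(x+y)-u(x+z)$. Then $w$ solves $-\Delta w+T^{-2}w=h(\cdot+y)-h(\cdot+z)$, and the second half of (\ref{6.2-0}) guarantees that the right-hand side satisfies exactly the hypothesis (\ref{6.1-0}) of Lemma \ref{lemma-6.1} with constant $C_0\,\|A(\cdot+y)-A(\cdot+z)\|_{L^\infty}$. Lemma \ref{lemma-6.1} therefore yields
\[
\|u(\cdot+y)-u(\cdot+z)\|_{L^\infty}\le C\,T^2\|A(\cdot+y)-A(\cdot+z)\|_{L^\infty},\qquad
\|\nabla u(\cdot+y)-\nabla u(\cdot+z)\|_{L^\infty}\le C\,T\|A(\cdot+y)-A(\cdot+z)\|_{L^\infty}.
\]
Evaluating at $x=0$ gives point-wise translation estimates for $u$ and $\nabla u$.

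Next I would bound oscillations around the origin. For any $R\in(0,T]$ and $y\in\rd$, choose $z$ with $|z|\le R$ nearly minimizing $\|A(\cdot+y)-A(\cdot+z)\|_{L^\infty}$, so that after taking a supremum in $y$ the $A$-norm is controlled by $\rho(R)$. Splitting $|\nabla u(y)-\nabla u(0)|\le|\nabla u(y)-\nabla u(z)|+|\nabla u(z)-\nabla u(0)|$, the first term is $\le CT\rho(R)$ by the translate bound and the second is $\le C_\sigma T^{1-\sigma}R^\sigma$ by the H\"older estimate (\ref{6.1-3}); taking $\inf_{R\le T}$ gives
\[
T^{-1}\sup_{y\in\rd}|\nabla u(y)-\nabla u(0)|\le C_\sigma\,\Theta_\sigma(T).
\]
The same decomposition for $u$, but using $|u(z)-u(0)|\le \|\nabla u\|_{L^\infty}|z|\le CTR$ from (\ref{6.1-2}), produces
\[
T^{-2}\sup_{y\in\rd}|u(y)-u(0)|\le C\inf_{R\le T}\{\rho(R)+R/T\}=C\,\Theta_1(T).
\]

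Finally I would fix the values at $0$ using integration over large balls. Writing $\nabla u(0)=\fint_{B(0,L)}(\nabla u(0)-\nabla u(y))\,dy+\fint_{B(0,L)}\nabla u(y)\,dy$ and rewriting the last integral as a boundary flux, Lemma \ref{lemma-6.1} gives $|\fint_{B(0,L)}\nabla u|\le C\|u\|_{L^\infty}/L\le CT^2/L\to0$, hence $|\nabla u(0)|\le\sup_y|\nabla u(y)-\nabla u(0)|$ and the gradient bound in (\ref{6.2-1}) follows. For $u$ itself, integrating the equation $-\Delta u+T^{-2}u=h$ over $B(0,L)$ yields
\[
T^{-2}\fint_{B(0,L)}u=\fint_{B(0,L)}h+\frac{1}{|B(0,L)|}\int_{\partial B(0,L)}\frac{\partial u}{\partial n}\,dS,
\]
and the boundary term is $O(T/L)$ by $\|\nabla u\|_{L^\infty}\le CT$; letting $L\to\infty$ gives $\fint_{B(0,L)}u\to T^2<h>$. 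Thus $|u(0)|\le\sup_y|u(y)-u(0)|+T^2|<h>|$, which combined with the oscillation bound gives the first inequality in (\ref{6.2-1}).

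I expect no serious obstacle, since the skeleton is exactly that of Theorem \ref{theorem-5.4}; the delicate point worth checking carefully is that the hypothesis (\ref{6.2-0}) is really the right input for Lemma \ref{lemma-6.1} applied to the difference equation, and that the mean-value extraction via the divergence-theorem computation does give $<u>=T^2<h>$ with an error that truly decays as $L\to\infty$ under only $L^\infty$ control on $u$ and $\nabla u$.
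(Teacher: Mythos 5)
Your proposal is correct and follows essentially the same route as the paper: applying Lemma \ref{lemma-6.1} to the difference $u(\cdot+y)-u(\cdot+z)$ to get translate bounds of the form $C\,T^2\|A(\cdot+y)-A(\cdot+z)\|_{L^\infty}$ and $C\,T\|A(\cdot+y)-A(\cdot+z)\|_{L^\infty}$, splitting the oscillation of $u$ (resp.\ $\nabla u$) around the origin via a nearby translate $|z|\le R$ as in Theorem \ref{theorem-5.4}, and recovering the pointwise value at $0$ by averaging over $B(0,L)$ and letting $L\to\infty$ with the facts $\langle\nabla u\rangle=0$ and $\langle u\rangle=T^2\langle h\rangle$. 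The only stylistic difference is that the paper deduces the existence of the mean values by observing $u,\nabla u$ are uniformly almost-periodic and $h\in B^2(\rd)$, whereas you derive $\langle\nabla u\rangle=0$ and $\langle u\rangle=T^2\langle h\rangle$ by direct integration of the equation over $B(0,L)$ and tracking the decay of the boundary flux; both are valid, but you should note that the limit $\fint_{B(0,L)}h\to\langle h\rangle$ still requires knowing $h$ has a mean value, which follows from the second line of (\ref{6.2-0}) exactly as in the paper's $B^2$ argument.
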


\begin{proof}
By applying Lemma \ref{lemma-6.1} to the function
$$
\big( u(x+y)-u(x+z)\big)/\big(C_0\| A(\cdot +y)-A(\cdot +z)\|_{L^\infty}\big),
$$
 with $y,z$ fixed, we obtain 
 \begin{equation}\label{6.2-3}
 \aligned
 \| u(\cdot +y)-u(\cdot +z)\|_{L^\infty}
 &\le C\, T^2 \, \| A(\cdot +y) -A(\cdot +z)\|_{L^\infty},\\
  \| \nabla u(\cdot +y)-\nabla u(\cdot +z)\|_{L^\infty}
 &\le C\, T \, \| A(\cdot +y) -A(\cdot +z)\|_{L^\infty},
 \endaligned
 \end{equation}
 where $C$ depends only on $d$, $C_0$ and $\sigma$.
 This shows that $u$ and $\nabla u$ are uniformly almost-periodic.
 In particular, $u$ and $\nabla u$ have mean values and $\langle \nabla u\rangle =0$.
 Also, note that condition (\ref{6.2-0}) implies that $h\in B^2(\rd)$
 and hence has the mean value $\langle h \rangle $.
 It is easy to deduce from the equation (\ref{6.1-1}) that
 $\langle u \rangle =T^2 \langle h \rangle $.

Note that for any $y\in \rd$ and $z\in \rd$ with $|z|\le R\le T$,
$$
\aligned
T^{-2} |u(y)- u(0)| & \le  T^{-2} |u(y)-u(z)|+ T^{-2} |u(z)-u(0)|\\
& \le C \, \| A(\cdot +y)- A(\cdot +z)\|_{L^\infty} +C\, T^{-1} R,
\endaligned
$$
where we have used (\ref{6.2-3}) and $\|\nabla u\|_{L^\infty} \le C\, T$ for the second inequality.
It follows from the definition of $\rho(R)$ that
$$
\sup_{y\in \rd} T^{-2} | u(y)- u(0)|
\le C\, \Big\{  \rho (R) + T^{-1} R \Big\}  \quad \text{ for any } 0<R\le T.
$$
By the definition of $\Theta_1$, this gives
\begin{equation}\label{6.2-5}
\sup_{y\in \rd} T^{-2} | u(y)- u(0)| \le C\, \Theta_1 (T).
\end{equation}
Using
$$
|T^{-2} u(0)|\le  T^{-2} \left|\average_{B(0, L)} \left\{ u(y) -u(0)\right\}\, dy \right|
+\left|\average_{B(0,L)} u(x) \right|
$$
for any $L>0$ and (\ref{6.2-5}), we see that by letting $L\to \infty$,
\begin{equation}\label{6.2-7}
|T^{-2} u(0)|\le C\, \Theta_1 (T) +T^{-2} |\langle u \rangle |
=C\, \Theta_1 (T) +|\langle h \rangle |.
\end{equation}
The first inequality in (\ref{6.2-1}) now follows from (\ref{6.2-5}) and (\ref{6.2-7}).

Finally, we point out that the second inequality in (\ref{6.2-1}) follows in the same manner, using (\ref{6.2-3})
and (\ref{6.1-3}) as well as the fact that the mean value of $\nabla u$ is zero.
\end{proof}

We are now ready to estimate the rates of convergence of $u_\varep$ to $u_0$.

\begin{theorem}\label{theorem-6.1}
Let $u_\varep$ $(\varep\ge 0)$ be the weak solution of $\mathcal{L}_\varep (u_\varep)=F$ in $\Omega$
and $u_\varep =g$ on $\partial\Omega$.
Suppose that $u_0\in W^{2, 2}(\Omega)$.
Let
\begin{equation}\label{6.3-1}
w_\varep (x)=u_\varep (x) -u_0 (x) -\varep \chi_{T, j} (x/\varep)\, \frac{\partial u_0}{\partial x_j}
+ v_\varep,
\end{equation}
where $T= \varep^{-1}$ and $v_\varep\in H^1(\Omega;\mathbb{R}^m)$ is the weak solution
of the Dirichlet problem:
\begin{equation}\label{6.3-1-1}
\mathcal{L}_\varep (v_\varep)=0 \quad \text{ in } \Omega \quad \text{ and } \quad 
 v_\varep =\varep \chi_{T, j} (x/\varep) \frac{\partial u_0}{\partial x_j}
 \quad \text{ on } \partial\Omega.
 \end{equation}
 Then
   \begin{equation}\label{6.3-1-1-1}
 \| w_\varep\|_{H^1(\Omega)}
 \le C_\sigma\, 
\Big\{ \Theta_\sigma (T) +
\langle |\psi-\nabla \chi_T|\rangle \Big\}
 \| u_0\|_{W^{2,2}(\Omega)}
 \end{equation}
 for any $\sigma \in (0,1)$, where $C_\sigma$ depends only on $\sigma$, $A$ and $\Omega$.
 \end{theorem}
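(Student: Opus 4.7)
I would begin by computing $\mathcal{L}_\varepsilon w_\varepsilon$ explicitly. Using $\mathcal{L}_\varepsilon u_\varepsilon = F = \mathcal{L}_0 u_0$, $\mathcal{L}_\varepsilon v_\varepsilon = 0$, and the chain rule applied to $\chi_T(x/\varepsilon)$, a direct calculation yields
\[
\mathcal{L}_\varepsilon(w_\varepsilon)^\alpha = -\partial_i\bigl[B_{T,ik}^{\alpha\beta}(x/\varepsilon)\,\partial_k u_0^\beta\bigr] + \varepsilon\,\partial_i\bigl[a_{ij}^{\alpha\gamma}(x/\varepsilon)\,\chi_{T,k}^{\gamma\beta}(x/\varepsilon)\,\partial_j\partial_k u_0^\beta\bigr],
\]
where $B_{T,ik}^{\alpha\beta}(y) := \widehat{a}_{ik}^{\alpha\beta}-a_{ik}^{\alpha\beta}(y)-a_{ij}^{\alpha\gamma}(y)\,\partial_{y_j}\chi_{T,k}^{\gamma\beta}(y)$. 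By the choice of $v_\varepsilon$, $w_\varepsilon\in H^1_0(\Omega)$; testing against $w_\varepsilon$ and using the ellipticity (\ref{ellipticity}) gives
\[
\|\nabla w_\varepsilon\|_{L^2(\Omega)}^2 \le C\,\Bigl|\int_\Omega B_T(x/\varepsilon)\,\nabla u_0 \cdot \nabla w_\varepsilon\,dx\Bigr| + C\,\varepsilon\|\chi_T\|_{L^\infty}\|\nabla^2 u_0\|_{L^2}\|\nabla w_\varepsilon\|_{L^2}.
\]
The second term is already of the required order, since by Theorem~\ref{theorem-5.4} with $T=1/\varepsilon$ one has $\varepsilon\|\chi_T\|_{L^\infty}\le C\Theta_\sigma(T)$.

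For the main flux integral, I would decompose $B_T = \langle B_T\rangle + R_T$ with $R_T := B_T-\langle B_T\rangle$. The mean part is controlled by $|\langle B_T\rangle|\le C\|\psi-\nabla\chi_T\|_{B^2}$ via (\ref{homo-coef-diff}). For the fluctuation, the essential new ingredient is the identity
\[
\partial_{y_i} B_{T,ik}^{\alpha\beta}(y) = -T^{-2}\chi_{T,k}^{\alpha\beta}(y),
\]
obtained by differentiating the corrector equation (\ref{corrector-equation}), which says $B_T$ is ``almost divergence-free''. Integrating by parts in $\partial_i$ (the boundary contribution vanishing because $w_\varepsilon=0$ on $\partial\Omega$) gives
\[
\int_\Omega R_T(x/\varepsilon)\,\nabla u_0\cdot\nabla w_\varepsilon\,dx = \varepsilon\int_\Omega \chi_T(x/\varepsilon)\,\nabla u_0\cdot w_\varepsilon\,dx - \int_\Omega R_T(x/\varepsilon)\,\nabla^2 u_0\cdot w_\varepsilon\,dx,
\]
and the first piece on the right is controlled by $C\Theta_\sigma(T)\|u_0\|_{W^{2,2}}\|\nabla w_\varepsilon\|_{L^2}$ via the $L^\infty$ bound on $\chi_T$ and Poincar\'e.

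The remaining term $\int R_T(x/\varepsilon)\nabla^2 u_0\cdot w_\varepsilon$ is handled by invoking the approximate flux corrector $\mathfrak{u}_T$: applying Proposition~\ref{lemma-2.1} to the Helmholtz equation $-\Delta\mathfrak{u}_T+T^{-2}\mathfrak{u}_T=B_T-\langle B_T\rangle$ on $\mathbb{R}^d$, one obtains $\mathfrak{u}_T$ whose right-hand side satisfies the hypotheses of Lemma~\ref{lemma-6.2} (this is verified via the bound (\ref{ap-estimate-1}) on $\nabla\chi_T$, Lemma~\ref{lemma-2.0}, and Remark~\ref{remark-5.1} for the translation condition); Lemma~\ref{lemma-6.2} then yields $T^{-2}\|\mathfrak{u}_T\|_{L^\infty}\le C\Theta_1(T)$ and $T^{-1}\|\nabla\mathfrak{u}_T\|_{L^\infty}\le C\Theta_\sigma(T)$. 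Writing $R_T(x/\varepsilon)=-\varepsilon\,\partial_{x_j}\bigl[(\partial_{y_j}\mathfrak{u}_T)(x/\varepsilon)\bigr]+T^{-2}\mathfrak{u}_T(x/\varepsilon)$ (so that the ``flux'' $\varepsilon(\nabla_y\mathfrak{u}_T)(x/\varepsilon)$ is $L^\infty$-bounded by $C\Theta_\sigma(T)$), substituting into the integral, and integrating by parts once more on $\partial_j$ (again boundary-free thanks to $w_\varepsilon|_{\partial\Omega}=0$), produces all contributions of size $\Theta_\sigma(T)+\Theta_1(T)\le C\Theta_\sigma(T)$ times $\|u_0\|_{W^{2,2}}\|\nabla w_\varepsilon\|_{L^2}$. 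Combining all pieces and dividing by $\|\nabla w_\varepsilon\|_{L^2}$ yields (\ref{6.3-1-1-1}). The main obstacle is the delicate integration-by-parts bookkeeping needed to ensure that (i) every boundary contribution is killed by $w_\varepsilon|_{\partial\Omega}=0$, and (ii) the residual ``bad'' terms involving third derivatives of $u_0$ cancel after an appropriate second integration by parts, leaving only contributions controlled by $L^\infty$ factors of order $\Theta_\sigma(T)$. The divergence identity $\partial_{y_i}B_{T,ik}=-T^{-2}\chi_{T,k}$ together with the Helmholtz representation of $R_T$ are precisely the tools that make this bookkeeping possible.
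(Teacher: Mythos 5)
Your setup is correct and matches the paper's through the first few steps: you derive the same PDE for $w_\varepsilon$ with the same tensor $B_T$, obtain the same energy estimate by testing against $w_\varepsilon\in H^1_0(\Omega)$, bound the $\varepsilon\chi_T\nabla^2 u_0$ term via $T^{-1}\|\chi_T\|_{L^\infty}\le C\Theta_\sigma(T)$, split $B_T=\langle B_T\rangle+R_T$ and control the mean via \eqref{homo-coef-diff}, introduce the Helmholtz regularizer $-\Delta f+T^{-2}f=B_T-\langle B_T\rangle$, verify the hypotheses of Lemma~\ref{lemma-6.2}, and exploit the divergence identity $\partial_{y_i}B_{T,ik}^{\alpha\beta}=-T^{-2}\chi_{T,k}^{\alpha\beta}$. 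These are all correct and are indeed the ingredients used in the paper.

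The gap is at the final integration-by-parts bookkeeping, and it is not a cosmetic one. By first integrating by parts in $\int_\Omega R_T(x/\varepsilon)\nabla u_0\cdot\nabla w_\varepsilon\,dx$ to move $\partial_i$ off $\nabla w_\varepsilon$, you produce the residual $\int_\Omega R_T(x/\varepsilon)\nabla^2 u_0\cdot w_\varepsilon\,dx$, which has consumed the free gradient on $w_\varepsilon$. When you then substitute $R_T(x/\varepsilon)=-\varepsilon\,\partial_{x_j}\{(\partial_{y_j}f)(x/\varepsilon)\}+T^{-2}f(x/\varepsilon)$ and integrate by parts once more on $\partial_j$, the derivative must land on the product $\nabla^2 u_0\cdot w_\varepsilon$, generating a term
\[
\varepsilon\int_\Omega (\partial_{y_j}f)(x/\varepsilon)\,\nabla^3 u_0\cdot w_\varepsilon\,dx,
\]
which is meaningless for $u_0\in W^{2,2}(\Omega)$ and has nothing to cancel against. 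Your remark that ``the residual bad terms involving third derivatives of $u_0$ cancel after an appropriate second integration by parts'' is exactly where the argument would fail: there is no such cancellation along the route you chose.

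What the paper does instead is keep the term in the form $\int_\Omega \mathrm{div}\{(\Delta f)(x/\varepsilon)\nabla u_0\}\cdot w_\varepsilon\,dx$ and apply the antisymmetrization
\[
\Delta f_{ij}=\partial_k\bigl(\partial_k f_{ij}-\partial_i f_{kj}\bigr)+\partial_i\partial_k f_{kj},
\]
together with the observation that
\[
\frac{\partial^2}{\partial x_i\partial x_k}\left\{\Bigl[\frac{\partial f_{ij}}{\partial x_k}-\frac{\partial f_{kj}}{\partial x_i}\Bigr](x/\varepsilon)\,\frac{\partial u_0}{\partial x_j}\right\}=0,
\]
which is the symmetry of mixed partials applied to a tensor antisymmetric in $(i,k)$. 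This identity kills the problematic piece \emph{before} any extra derivative falls on $u_0$ or $w_\varepsilon$, and one is left only with $\varepsilon\,\nabla f$ paired against $\nabla^2 u_0$ and $\nabla w_\varepsilon$, plus $\nabla\partial_k f_{kj}$ paired against $\nabla u_0$ and $\nabla w_\varepsilon$ — the latter being exactly where the divergence identity $\partial_i B_{T,ik}=-T^{-2}\chi_{T,k}$ and Lemma~\ref{lemma-6.2} are used (to show $\|\nabla\partial_k f_{kj}\|_{L^\infty}\le C\Theta_\sigma(T)$). So the divergence identity you invoke is the right tool, but it enters at a different stage than where you placed it, and the antisymmetrization is the indispensable device that prevents $\nabla^3 u_0$ (and equally $\nabla^2 w_\varepsilon$) from ever appearing. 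Without it, the estimate cannot be closed under the hypothesis $u_0\in W^{2,2}(\Omega)$.
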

 
 \begin{proof}
 With loss of generality we may assume that 
\begin{equation}\label{6.3-0}
\| u_0 \|_{W^{2, 2}(\Omega)} = 1.
\end{equation}
A direct computation shows that
\begin{equation}\label{6.3-2}
\mathcal{L}_\varep (w_\varep)
=-\text{\rm div} \big( B_T (x/\varep)\nabla u_0\big)
+\varep\,  \text{\rm div} \left\{ A(x/\varep) \chi_T (x/\varep) \nabla^2 u_0\right\},
\end{equation}
where $B_T (y)= \big(b_{T, ij}^{\alpha\beta} (y) \big)$ is given by
\begin{equation}\label{6.3-3}
b_{T, ij}^{\alpha\beta} (y)
=\widehat{a}^{\alpha\beta}_{ij}-a_{ij}^{\alpha\beta} (y)
-a_{ik}^{\alpha\gamma} (y)\, \frac{\partial}{\partial y_k}
\left\{\chi_{T, j}^{\gamma\beta} (y) \right\}.
\end{equation}
Since $w_\varep \in H^1_0(\Omega; \mathbb{R}^m)$,
it follows from (\ref{6.3-2}) that
\begin{equation}\label{6.3-5}
\aligned
c\int_\Omega |\nabla w_\varep|^2\, dx
&\le \left|\int_\Omega 
\text{\rm div} \big( B_T (x/\varep)\nabla u_0\big)\cdot w_\varep\, dx \right|
+\int_\Omega
|\varep\chi_T (x/\varep)|\, |\nabla^2 u_0| |\nabla w_\varep |\, dx\\
&= I_1 +I_2.
\endaligned
\end{equation}
It suffices to show that
\begin{equation}\label{6.3-5-5}
I_1 +I_2
\le C_\sigma \, \Big\{ \Theta_\sigma (T) +\langle |\psi-\nabla \chi_T|\rangle  \Big\} \| w_\varep\|_{H^1(\Omega)}
\end{equation}
for any $\sigma \in (0,1)$.

First, it is easy to see that
\begin{equation}\label{6.3-6}
I_2 \le C\, \varep\, \|\chi_T\|_{L^\infty} \|\nabla w_\varep\|_{L^2(\Omega)}
\le C\, \Theta_\sigma (T) \| \nabla w_\varep\|_{L^2(\Omega)}
\end{equation}
for any $\sigma \in (0,1)$, where we have used (\ref{6.3-0}) and (\ref{5.5-1}).

Next, to estimate $I_1$, we let $h(y)=h_T (y)=B_T (y)-\langle B_T\rangle $ and solve the equation (\ref{6.1-1}).
More precisely, let $h=\big(h_{ij}^{\alpha\beta}\big)$ and $f=(f_{ij}^{\alpha\beta})$, where $f_{ij}^{\alpha\beta}
\in H^2_{\loc} (\rd)$ sovles
\begin{equation}\label{6.3-7}
-\Delta f_{ij}^{\alpha\beta} + T^{-2} f_{ij}^{\alpha\beta} =h_{ij}^{\alpha\beta} \quad \text{ in } \rd.
\end{equation}
By (\ref{5.2-1}) and (\ref{remark-5.1-1}), the function $h$ satisfies the condition (\ref{6.2-0}) for any $\sigma \in (0,1)$.
Since $\langle h \rangle =0$, it follows from Lemma \ref{lemma-6.2} that
\begin{equation}\label{6.3-8}
\aligned
T^{-2}\, \| f\|_{L^\infty}  &\le C\,  \Theta_1 (T),\\
T^{-1}\, \|\nabla f \|_{L^\infty} & \le C \, \Theta_\sigma (T)
\endaligned
\end{equation}
for any $\sigma \in (0,1)$.
Using (\ref{6.3-7}) and integration by parts, we may bound $I_1$ in (\ref{6.3-5}) by
\begin{equation}\label{6.3-9}
\aligned
\left| \int_\Omega 
\text{\rm div} \big \{ \Delta f  (x/\varep) \nabla u_0\big\}  \cdot w_\varep\, dx \right|
 &+T^{-2} \int_\Omega |f(x/\varep)|\, |\nabla u_0| \, |\nabla w_\varep|\, dx\\
 &+ C \langle |\psi-\nabla \chi_T|\rangle  \| w_\varep\|_{L^2(\Omega)},
 \endaligned
\end{equation}
where we have used the fact
$
|\langle B_T\rangle |\le C \langle |\psi-\nabla \chi_T|\rangle.
$
Note that by (\ref{6.3-8}), the second term in (\ref{6.3-9}) is bounded by 
$C \, \Theta_1 (T)\, \| \nabla w_\varep\|_{L^2(\Omega)}$.

It remains to estimate the first term in (\ref{6.3-9}), which we denote by $I_{11}$.
To this end we write
$$
\aligned
\text{\rm div} \left\{ \Delta f (x/\varep) \nabla u_0\right\} \cdot w_\varep
&=\frac{\partial}{\partial x_i } \left\{ \Delta f_{ij}^{\alpha\beta} (x/\varep)\frac{\partial u_0^\beta}{\partial x_j}\right\}\cdot w_\varep^\alpha\\
&=\frac{\partial}{\partial x_i}
\left\{ \frac{\partial}{\partial x_k}
\left\{ \frac{\partial f_{ij}^{\alpha\beta}}{\partial x_k} -\frac{\partial f_{kj}^{\alpha\beta}}{\partial x_i} \right\} (x/\varep)
\frac{\partial u_0^\beta}{\partial x_j} \right\} \cdot w_\varep^\alpha\\
&\qquad +\frac{\partial}{\partial x_i}
\left\{ \frac{\partial^2 f_{kj}^{\alpha\beta}}{\partial x_k\partial x_i} (x/\varep) \frac{\partial u^\beta_0}{\partial x_j}\right\}
\cdot w_\varep^\alpha\\
&=-\frac{\partial}{\partial x_i}
\left\{ \varep
\left\{ \frac{\partial f_{ij}^{\alpha\beta}}{\partial x_k} -\frac{\partial f_{kj}^{\alpha\beta}}{\partial x_i} \right\} (x/\varep)
\frac{\partial^2 u_0^\beta}{\partial x_k\partial x_j} \right\} \cdot w_\varep^\alpha\\
&\qquad +\frac{\partial}{\partial x_i}
\left\{ \frac{\partial^2 f_{kj}^{\alpha\beta}}{\partial x_k\partial x_i} (x/\varep) \frac{\partial u^\beta_0}{\partial x_j}\right\}
\cdot w_\varep^\alpha,
\endaligned
$$
where we have used the product rule and the fact that
$$
\frac{\partial^2}{\partial x_i\partial x_k} \left\{
\left[ \frac{\partial f_{ij}^{\alpha\beta}}{\partial x_k} -\frac{\partial f_{kj}^{\alpha\beta}}{\partial x_i} \right] 
\big({x}/{\varep}\big)
\frac{\partial u_0^\beta}{\partial x_j} \right\} =0.
$$
It then follows from an integration by parts that
\begin{equation}\label{6.3.10}
\aligned
I_{11} & \le C \varep \int_\Omega |\nabla f (x/\varep)|\, |\nabla^2 u_0|\, |\nabla w_\varep|\, dx\\
&\qquad \qquad 
+C\sum_{j, \alpha, \beta}  \int_\Omega \big|\nabla \frac{\partial f^{\alpha\beta}_{kj}}{\partial x_k} (x/\varep)\big|\, |\nabla u_0|\, |\nabla w_\varep|\, dx\\
&= I_{11}^{(1)} +I_{11}^{(2)}.
\endaligned
\end{equation}
In view of (\ref{6.3-8}) we have
\begin{equation}\label{6.3.11}
I_{11}^{(1)}
\le C \, \varep \|\nabla f\|_{L^\infty} \|\nabla w_\varep \|_{L^2 (\Omega)}
\le C\, \Theta_\sigma (T)\, \|\nabla w_\varep\|_{L^2(\Omega)}
\end{equation}
for any $\sigma \in (0,1)$.

Finally, to estimate $I_{11}^{(2)}$, we note that
by the definition of $\chi_T$,
$$
\frac{\partial h_{ij}^{\alpha\beta}}{\partial y_i}
=\frac{\partial}{\partial y_i} \left\{ b_{T, ij}^{\alpha\beta}\right\}
=-\frac{1}{T^2} \chi_{T, j}^{\alpha\beta}.
$$
It follows that
$$
-\Delta\left( \frac{\partial f_{ij}^{\alpha\beta}}{\partial y_i}\right)
+\frac{1}{T^2} \left(\frac{\partial f_{ij}^{\alpha\beta}}{\partial y_i}\right)
=-\frac{1}{T^2} \chi_{T, j}^{\alpha\beta}.
$$
Observe that the function $T^{-1}\chi_T$ satisfies the assumption on $h$ in Lemma \ref{lemma-6.2}
with $\sigma =1$. As a result, we obtain 
$$
\Big\|  \nabla \frac{\partial f_{ij}^{\alpha\beta}}{\partial x_i}\Big\|_{L^\infty}
\le C_\sigma\,  \Theta_\sigma (T)
$$
for any $\sigma \in (0,1)$. This allows us to bound $I_{11}^{(2)}$
by $C_\sigma\, \Theta_\sigma (T) \| \nabla w_\varep\|_{L^2(\Omega)}$, and completes the proof.
\end{proof}

The next lemma gives an estimate for the norm of $v_\varep$ in $H^1(\Omega)$.

\begin{lemma}\label{lemma-6.4}
Let $v_\varep$ be the weak solution of (\ref{6.3-1-1}) with $T=\varep^{-1}$. Then
\begin{equation}\label{6.4-0}
\| v_\varep\|_{H^1(\Omega)} \le C_\sigma \left(T^{-1} \| \chi_T \|_{L^\infty}\right)^{\frac12-\sigma}
 \Big \{ \|\nabla u_0\|_{L^\infty(\Omega)}
 +\|\nabla^2 u_0\|_{L^2(\Omega)} \Big\}
\end{equation}
for any $\sigma\in (0,1/2)$, where $C_\sigma$ depends only on $A$, $\Omega$, and $\sigma$.
\end{lemma}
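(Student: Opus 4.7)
The plan is to construct a thin boundary-layer extension of the Dirichlet data, reduce the $H^1$-norm of $v_\varep$ to the $H^1$-norm of this extension, and optimize the layer width. Let $\delta\in(0,1)$ be a parameter to be chosen, and let $\eta_\delta\in C^\infty(\overline{\Omega})$ satisfy $\eta_\delta=1$ on $\partial\Omega$, $\eta_\delta=0$ when $\text{\rm dist}(x,\partial\Omega)\ge\delta$, and $|\nabla\eta_\delta|\le C\delta^{-1}$. Since $p>d$ forces $\nabla u_0$ to have a continuous representative on $\overline{\Omega}$, one can set
$$
V_\varep(x)=\eta_\delta(x)\,\varep\,\chi_T(x/\varep)\,\nabla u_0(x)\in H^1(\Omega;\mathbb{R}^m),
$$
which agrees with $v_\varep$ on $\partial\Omega$. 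Then $w:=v_\varep-V_\varep\in H^1_0(\Omega;\mathbb{R}^m)$ satisfies $\mathcal{L}_\varep(w)=-\mathcal{L}_\varep(V_\varep)$; testing with $w$ and invoking ellipticity together with Poincar\'e's inequality gives
$$
\|v_\varep\|_{H^1(\Omega)}\le C\bigl(\|V_\varep\|_{L^2(\Omega)}+\|\nabla V_\varep\|_{L^2(\Omega)}\bigr),
$$
so it suffices to estimate $V_\varep$ in $H^1$.

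Set $\mu:=T^{-1}\|\chi_T\|_{L^\infty(\rd)}$, so that $\|\varep\chi_T(x/\varep)\|_{L^\infty}=\mu$ in view of $\varep T=1$. Since $\Omega_\delta:=\{x\in\Omega:\text{\rm dist}(x,\partial\Omega)<\delta\}$ has measure $\lesssim\delta$,
$$
\|V_\varep\|_{L^2(\Omega)}\le C\delta^{1/2}\mu\,\|\nabla u_0\|_{L^\infty(\Omega)}.
$$
Expanding $\nabla V_\varep$ via the product rule yields three contributions: the $\nabla\eta_\delta$ piece is bounded in $L^2$ by $C\delta^{-1/2}\mu\,\|\nabla u_0\|_{L^\infty(\Omega)}$; the $\nabla^2 u_0$ piece is bounded by $\mu\,\|\nabla^2 u_0\|_{L^2(\Omega)}$; the middle term $\eta_\delta\,\nabla\chi_T(x/\varep)\,\nabla u_0$ reduces to estimating
$$
I:=\int_{\Omega_\delta}|\nabla\chi_T(x/\varep)|^2\,dx.
$$

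The central step, and the expected main obstacle, is to bound $I$ with a gain from the thinness of $\Omega_\delta$. After the change of variables $y=x/\varep$, $I=\varep^d\int_{E_\varep}|\nabla\chi_T(y)|^2\,dy$, where $E_\varep:=\varep^{-1}\Omega_\delta$ is a layer of width $\delta T$ around $\varep^{-1}\partial\Omega$ with volume $\sim\delta T^d$. Covering $E_\varep$ by $\sim\delta^{1-d}$ balls of radius $\delta T$ (permissible since $\delta T\le T$) and applying the decay estimate (\ref{ap-estimate-1}) at scale $r=\delta T$ give $\average_{B(y,\delta T)}|\nabla\chi_T|^2\le C_{\sigma'}\delta^{-2\sigma'}$ for each $\sigma'\in(0,1)$; summing yields $I\le C_{\sigma'}\delta^{1-2\sigma'}$. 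Collecting all four pieces,
$$
\|v_\varep\|_{H^1(\Omega)}\le C_{\sigma'}\bigl(\mu\delta^{-1/2}+\delta^{1/2-\sigma'}\bigr)\|\nabla u_0\|_{L^\infty(\Omega)}+C\mu\|\nabla^2 u_0\|_{L^2(\Omega)}.
$$
Balancing the two interior terms by choosing $\delta=\mu^{1/(1-\sigma')}\in(0,1)$ produces the exponent $(1-2\sigma')/(2-2\sigma')$ on $\mu$. Given any $\sigma\in(0,1/2)$, the choice $\sigma'=\sigma$ satisfies $(1-2\sigma)/(2-2\sigma)\ge 1/2-\sigma$, and absorbing the last term via $\mu\le\mu^{1/2-\sigma}$ (valid once $T$ is large enough that $\mu\le 1$) gives the claimed inequality. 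The only place that requires real care is the covering argument for $I$: no pointwise control on $\nabla\chi_T$ is available in the almost-periodic setting, and the proof hinges on coupling the scale-$\delta T$ $L^2$-average decay (\ref{ap-estimate-1}) with the Whitney-type count of balls needed to cover the thin boundary layer.
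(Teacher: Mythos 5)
Your proof is correct and follows essentially the same route as the paper's. The only differences are cosmetic: the paper passes through the $H^{1/2}(\partial\Omega)$ trace norm and the a priori bound $\|v_\varep\|_{H^1(\Omega)}\le C\,\varep\|\chi_T(x/\varep)\nabla u_0\|_{H^{1/2}(\partial\Omega)}$ rather than writing out the energy identity for $w=v_\varep-V_\varep$, which is the same estimate in a different notation; the paper covers $\Omega_\delta$ with cubes of side $\delta$ in the $x$-variable (equivalently side $\delta T$ in the $y$-variable) rather than balls of radius $\delta T$ in the $y$-variable after rescaling, which is the same covering; and the paper does not optimize over the layer width but simply \emph{sets} $\delta:=T^{-1}\|\chi_T\|_{L^\infty}$ from the outset, so that the cutoff term $\|\chi_T\|_{L^\infty}\delta^{-1/2}\varep$ becomes $\delta^{1/2}$ and the gradient term $\left(\int_{\Omega_\delta}|\nabla\chi_T(x/\varep)|^2\right)^{1/2}\le C_\sigma\delta^{(1-\sigma)/2}$ dominates, giving the exponent $(1-\sigma')/2=1/2-\sigma$ after relabeling $\sigma'=2\sigma$. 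Your balancing of $\delta$ produces a marginally larger (hence better) exponent $(1-2\sigma)/(2-2\sigma)$, but as you note this is absorbed into $\mu^{1/2-\sigma}$ anyway for $\mu\le1$. The ``central step'' you identified—the scale-$\delta T$ average bound (\ref{ap-estimate-1}) combined with the boundary-layer measure count—is indeed precisely the key step in the paper's proof.
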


\begin{proof}
We may assume that $\|\nabla u_0\|_{L^\infty(\Omega)}
 +\|\nabla^2 u_0\|_{L^2(\Omega)}=1$. We may also assume that
 $\delta=T^{-1} \| \chi_T\|_{L^\infty}>0$, and is small, since $\delta\to 0$ as $T\to \infty$.
Choose  a cut-off function $\eta_\delta \in C_0^\infty (\rd)$ so that
$0\le \eta_\delta\le 1$, $\eta_\delta (x)=1$ if dist$(x, \partial\Omega)<\delta$,
$\eta_\delta (x)=0$ if dist$(x, \partial\Omega)\ge 2\delta$, and
$|\nabla\eta_\delta|\le C\, \delta^{-1}$.
Note that
\begin{equation}\label{6.3-14}
\aligned
\| v_\varep\|_{H^1(\Omega)}
& \le C\,\varep\, \| \chi_T (x/\varep)\nabla u_0\|_{H^{1/2}(\partial\Omega)}\\
 & \le C\,\varep\,  \| \eta_\delta \chi_T (x/\varep)\nabla u_0\|_{H^{1}(\Omega)}\\
 &\le C \left\{ \|\chi_T\|_{L^\infty} \delta^{-1/2}\varep
 + \left(\int_{\Omega_\delta} |\nabla \chi_T (x/\varep)|^2 \, dx\right)^{1/2}
 \right\},
 \endaligned
\end{equation}
where
$\Omega_\delta =\big\{ x\in \Omega:\, \text{\rm dist}(x,\partial\Omega)\le 2\delta\big\}$.
Since $\| \chi_T\|_{L^\infty} \delta^{-1/2}\varep
=\delta^{1/2}$, we only need to
estimate the integral of $|\nabla \chi_T (x/\varep)|^2$ over $\Omega_\delta$.

To this end, we 
 cover $\Omega_\delta$ with cubes  $\{ Q_j\}$ of side length $\delta$ so that
 $\sum_j |Q_j|\le C \, \delta$.
 It follows that
 \begin{equation}\label{6.3-15}
 \aligned
 \int_{\Omega_\delta} |\nabla \chi_T (x/\varep)|^2\, dx
 &\le \sum_j \int_{Q_j}  |\nabla \chi_T (x/\varep)|^2\, dx
 \le \sum_j |Q_j| \average_{\frac{1}{\varep} Q_j}
 |\nabla \chi_T|^2\\
 &\le C\, \delta \sup_{\ell (Q)=\delta T} \average_Q |\nabla \chi_T|^2
 \le C_\sigma\, \delta^{1-\sigma}
\endaligned
\end{equation}
for any $\sigma\in (0,1)$,
where we have used the estimate (\ref{5.2-1}) in the last inequality.
This, together with (\ref{6.3-14}), gives (\ref{6.4-0}).
\end{proof}

We are now in a position to give the proof of Theorems \ref{main-theorem-1} and \ref{main-theorem-2}.

\begin{proof}[\bf Proof of Theorem \ref{main-theorem-1}]

It follows from Theorem \ref{theorem-6.1} and Lemma \ref{lemma-6.4} that for 
any $\sigma \in (0,1)$ and $\delta\in (0,1/2)$,
\begin{equation}\label{6.6-1}
\aligned
\| u_\varep -u_0 - & \varep\chi_T (x/\varep)\nabla u_0\|_{H^1(\Omega)}\\
 &\le C\, \Big\{ \Theta_\sigma (T) +\langle |\psi-\nabla \chi_T|\rangle \Big\} \| u_0\|_{W^{2,2}(\Omega)}\\
& \quad \qquad
+C \big[\Theta_\sigma (T)\big]^{\frac12 -\delta}
\Big\{ \|\nabla u_0\|_{L^\infty(\Omega)} +\|\nabla^2 u_0\|_{L^2(\Omega)} \Big\}\\
&\le C\,  \left\{ 
\langle |\psi-\nabla \chi_T|\rangle 
+\big[\Theta_\sigma (T)\big]^{\frac12 -\delta}
\right\}
\| u_0\|_{W^{2, p} (\Omega)}\\
&\le  C\, \left\{ 
\langle |\psi-\nabla \chi_T|\rangle 
+\big[\Theta_1 (T)\big]^{\sigma (\frac12 -\delta)}
\right\}
\| u_0\|_{W^{2, p} (\Omega)},
\endaligned
\end{equation}
where $T=\varep^{-1}$ and we have used the Sobolev imbedding $\|\nabla u_0\|_{L^\infty(\Omega)}
\le C\, \| u_0\|_{W^{2, p}(\Omega)}$ for $p>d$.
This implies that
$$
\aligned
\| u_\varep -u_0\|_{L^2(\Omega)}
&\le \| \varep\chi_T (x/\varep)\nabla u_0\|_{L^2(\Omega)}
+ C \left\{ 
\langle |\psi-\nabla \chi_T|\rangle 
+\big[\Theta_1 (T)\big]^{\frac14}
\right\}
\| u_0\|_{W^{2, p} (\Omega)}\\
&\le C\, 
 \left\{ 
\langle |\psi-\nabla \chi_T|\rangle 
+\big[\Theta_1 (T)\big]^{\frac14}
\right\}
\| u_0\|_{W^{2, p} (\Omega)},
\endaligned
$$
where $C$ depends only on  $A$ and $\Omega$.
Since $\langle |\psi-\nabla\chi_T|\rangle  +\big[\Theta_1 (T)\big]^{\frac14}
\to 0$ as $T\to \infty$,
one may find a modulus $\eta$ on $(0,1]$, depending only on $A$, such that $\eta (0+)=0$ and
$$
\langle |\psi-\nabla \chi_T|\rangle 
+\big[\Theta_1 (T)\big]^{\frac14}
\le \eta (T^{-1})
$$
for $T\ge 1$.
As a result, we obtain 
$$
\aligned
\| u_\varep -u_0 - \varep\chi_T (x/\varep)\nabla u_0\|_{H^1(\Omega)}
&\le C\, \eta (\varep) \, \| u_0\|_{W^{2, p} (\Omega)},\\
\| u_\varep -u_0 \|_{L^2(\Omega)}
&\le C\, \eta (\varep) \,\| u_0\|_{W^{2, p} (\Omega)}.
\endaligned
$$
Finally, we observe that by Theorem \ref{main-theorem-3}, for any $\sigma \in (0,1)$,
$$
\aligned
\| u_\varep\|_{C^\sigma (\overline{\Omega})}
&\le C \, \Big\{ \| g\|_{C^\sigma (\partial\Omega)} +\| F\|_{L^d (\Omega)} \Big\}\\
&\le C\, \left\{ \| u_0\|_{C^\sigma(\overline{\Omega})} +\|\nabla^2 u_0\|_{L^d(\Omega)} \right\}\\
&\le C\, \| u_0\|_{W^{2, d} (\Omega)}.
\endaligned
$$
It follows by interpolation that for any $\sigma \in (0,1)$,
$$
\| u_\varep -u_0 \|_{C^\sigma (\overline{\Omega})} \le C\, \widetilde{\eta} (\varep)\, \| u_0\|_{W^{2, p}(\Omega)},
$$
where $\widetilde{\eta}$ is a modulus function depending only on $A$ and $\sigma$, and $\widetilde{\eta} (0+)=0$.
This complete the proof.
\end{proof}

\begin{proof}[\bf Proof of Theorem \ref{main-theorem-2}]
Estimate (\ref{1.2-1}) follows directly from  (\ref{6.6-1}) and Theorem \ref{theorem-5.6}.
To see (\ref{1.2-0}) we use
\begin{equation}\label{6.7-1}
\aligned
\| u_\varep -u_0\|_{L^2(\Omega)}
&\le \| u_\varep -u_0 -\varep \chi_T (x/\varep)\nabla u_0 + v_\varep\|_{L^2(\Omega)}
+\| v_\varep\|_{L^2(\Omega)}\\
&\le C_\sigma \Big\{ \Theta_\sigma (T) +\langle |\psi -\nabla \psi_T |\rangle  \Big\} \| u_0\|_{W^{2,2}(\Omega)}
+\| v_\varep\|_{L^2(\Omega)},
\endaligned
\end{equation}
where $v_\varep$ is defined in Theorem \ref{theorem-6.1}.
By Theorem \ref{main-theorem-3} we obtain 
$$
\aligned
\| v_\varep\|_{L^2(\Omega)}
&\le C\, \| v_\varep \|_{L^\infty(\Omega)}\\
&\le C\, \| \varep \chi_T (x/\varep) \nabla u_0\|_{C^{\sigma_1}(\partial\Omega)}\\
&\le C\Big\{ \varep^{1-\sigma_1} \|\chi_T \|_{C^{0,\sigma_1}}  +\Theta_\sigma (T)\Big\} \| \nabla u_0\|_{C^{\sigma_1}
(\partial\Omega)}\\
&\le C\Big\{ T^{\sigma_1 -1} \| \chi_T \|_{C^{0, \sigma_1}}
  +\Theta_\sigma (T)\Big\} \|  u_0\|_{W^{2, p}(\Omega)},
\endaligned
$$
where $p>d$, $\sigma\in (0,1)$ and $0<\sigma_1<1-\frac{d}{p}$.
Since $ T^{-1} \| \chi_T \|_{L^\infty} \le C_\sigma\, \Theta_\sigma (T)$
and $|\chi_T (x)-\chi_T (y)|\le C_\alpha\, T^{1-\alpha} |x-y|^\alpha$ for any $\alpha\in (0,1)$,
it follows by interpolation that
$$
T^{\sigma_1 -1} \|\chi_T \|_{C^{0, \sigma_1}}
\le C \left[ \Theta_\sigma (T)\right]^{1-\sigma_2}
$$
for any $\sigma_2>\sigma_1$.
Hence,
$$
\| v_\varep\|_{L^2(\Omega)}
\le C\, \big[ \Theta_\sigma (T)\big]^{1-\delta} \| u_0\|_{W^{2, p}(\Omega)}
\le C\, \big[ \Theta_1 (T)\big]^{\sigma (1-\delta)} \| u_0\|_{W^{2, p}(\Omega)}
$$
for any $\delta, \sigma \in (0,1)$ and $p>d$,
where $C$ depends only on $\delta$, $p$, $\sigma$, $A$ and $\Omega$.
This, together with (\ref{6.7-1}) and Theorem \ref{theorem-5.6}, gives
$$
\aligned
\| u_\varep -u_0\|_{L^2(\Omega)}
&\le C\, \Big\{ \langle |\psi -\nabla \chi_T|\rangle 
+ \big[\Theta_1 (T)\big]^\sigma \Big\} \| u_0\|_{W^{2, p} (\Omega)}\\
& \le C\left\{\int_{\frac{1}{2\varep}}^\infty \frac{\Theta_\sigma (r)}{r}\, dr
+\big[ \Theta_1 (\varep^{-1})\big]^{\sigma} \right\}
\| u_0\|_{W^{2, p}(\Omega)}
\endaligned
$$
for any $\sigma\in (0,1)$, and completes the proof.
\end{proof}



\section{Quasi-periodic coefficients}
\setcounter{equation}{0}

In this section we consider the case where $A(x)$ is quasi-periodic and continuous.
More precisely, without loss of generality, we will assume that
\begin{equation}\label{quasi}
\left\{
\aligned
& A(x)=B(j_\lambda (x)),\\
& B \text{ is 1-periodic  and continuous in } \mathbb{R}^M,
\endaligned
\right.
\end{equation}
where $M=m_1+m_2+\cdots +m_d$, and for $x=(x_1, x_2, \dots, x_d)\in \rd$,
$$
j_\lambda (x)=
(\lambda^1_1 x_1,\lambda_1^2 x_1, \dots, \lambda_1^{m_1} x_1, 
\lambda_2^{1} x_2, \dots, \lambda_2^{m_2} x_2, 
\dots, \lambda_d^1 x_d, \dots, \lambda^{m_d}_d x_d)\in \mathbb{R}^M.
$$
Also, for each $i=1,2\dots, d$, the set $\{ \lambda_i^1, \dots, \lambda_i^{m_i}\} $ is assumed to be
linearly independent over $\mathbb{Z}$.
Under these conditions it is known that $A(x)$ is uniformly almost-periodic.
We shall be interested in conditions on $\lambda=\big(\lambda_i^j\big)$ that implies
the power decay of $\rho (R)$ as $R\to \infty$.
For convenience we consider
\begin{equation}\label{rho-1}
\rho_1 (R)
=\sup_{y\in \rd}\inf_{\substack{ z\in \rd \\ \| z\|_\infty\le R }}
\| A(\cdot +y ) -A(\cdot +z)\|_{L^\infty},
\end{equation}
where $\| z\|_\infty=\max (|z_1|, \dots, |z_d|)$ for $z=(z_1, \dots, z_d)$.
It is easy to see that $\rho_1(\sqrt{d} R) \le \rho (R) \le \rho_1 (R)$.

Let 
$$
\omega (\delta)=\sup \big\{ |B(x)-B(y)|: \, \| x-y\|_\infty\le \delta \big\},\quad \delta>0
$$
denote the modulus of continuity of $B(x)$.
For $x\in \mathbb{R}$, write $x=[x] +<x>$, where $[x]\in \mathbb{Z}$ and
$<x> \in [-1/2, 1/2)$. If $x=(x_1, \dots, x_M) \in \mathbb{R}^M$, we define $[x]
=([x_1], \dots, [x_N])$ and $<x>=(<x_1>, \dots, <x_M>)$.
It is easy to see that $\|<x>\|_\infty$ gives the distance from $x$ to $\mathbb{Z}^M$ with respect to the
norm $\|\cdot\|_\infty$.

\begin{lemma}\label{lemma-7.1}
Let $\rho_1 (R)$ be defined by (\ref{rho-1}). Then, for any $R>0$,
$\rho_1 (R)\le \omega (\theta_\lambda (R))$, where
\begin{equation}\label{7.1-0}
\theta_\lambda (R)
=\sup_{x\in [-1/2, 1/2]^M} \inf_{\substack{z\in \mathbb{R}^d  \\ \|z\|_\infty\le R}}
\| x-< j_\lambda( z )>\|_\infty.
\end{equation}
\end{lemma}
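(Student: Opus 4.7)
The plan is to use periodicity of $B$ to convert the $L^\infty$-difference of translates of $A$ into a distance-to-lattice quantity, and then apply the definition of $\theta_\lambda(R)$ to select an appropriate integer translation. Fix $y\in\mathbb{R}^d$. By the linearity of $j_\lambda$,
$$ A(x+y)-A(x+z)=B(j_\lambda(x)+j_\lambda(y))-B(j_\lambda(x)+j_\lambda(z)) $$
for any $x,z\in\mathbb{R}^d$. Since $B$ is $1$-periodic in each variable, for every $u\in\mathbb{R}^M$ one has $|B(u+v)-B(u+w)|\le\omega(\|<v-w>\|_\infty)$, because $<v-w>$ represents the closest-point distance from $v-w$ to $\mathbb{Z}^M$ in the $\|\cdot\|_\infty$ norm. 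Taking supremum over $x$ yields
$$ \|A(\cdot+y)-A(\cdot+z)\|_{L^\infty(\mathbb{R}^d)}\le\omega(\|<j_\lambda(y-z)>\|_\infty). $$

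Next, apply the definition of $\theta_\lambda(R)$ in \eqref{7.1-0} to the point $x_0=<j_\lambda(y)>\in[-1/2,1/2)^M$: there exists $z\in\mathbb{Z}^d$ with $\|z\|_\infty\le R$ such that
$$ \|x_0-<j_\lambda(z)>\|_\infty\le\theta_\lambda(R). $$
Since $j_\lambda(y)-j_\lambda(z)=j_\lambda(y-z)$ and $[j_\lambda(y)],[j_\lambda(z)]\in\mathbb{Z}^M$, the vectors $j_\lambda(y-z)$ and $x_0-<j_\lambda(z)>$ differ by an element of $\mathbb{Z}^M$. Consequently
$$ \|<j_\lambda(y-z)>\|_\infty\le\|x_0-<j_\lambda(z)>\|_\infty\le\theta_\lambda(R), $$
because $\|<\cdot>\|_\infty$ equals the $\|\cdot\|_\infty$-distance to $\mathbb{Z}^M$. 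Combining with the previous inequality gives $\|A(\cdot+y)-A(\cdot+z)\|_{L^\infty}\le\omega(\theta_\lambda(R))$ for this particular $z\in\mathbb{R}^d$ with $\|z\|_\infty\le R$. Taking the infimum over $z$ and then the supremum over $y$ yields $\rho_1(R)\le\omega(\theta_\lambda(R))$.

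The only nontrivial point is the passage from fractional parts of $j_\lambda(y)$ and $j_\lambda(z)$ to that of $j_\lambda(y-z)$; this is purely a congruence statement modulo $\mathbb{Z}^M$ and relies on $j_\lambda$ being linear together with $<\cdot>$ being the residue representative in $[-1/2,1/2)$. No approximation is needed since the infimum in \eqref{7.1-0} ranges over the finite set $\mathbb{Z}^d\cap\{\|z\|_\infty\le R\}$ and is therefore attained.
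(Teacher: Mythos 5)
Your proof is correct and follows essentially the same route as the paper: first you use the $1$-periodicity of $B$ to bound $\|A(\cdot+y)-A(\cdot+z)\|_{L^\infty}$ by $\omega$ applied to the distance of $j_\lambda(y)-j_\lambda(z)$ to $\mathbb{Z}^M$, then you observe that choosing $z\in\mathbb{Z}^d$ optimally for the point $x_0=<j_\lambda(y)>$ in the definition of $\theta_\lambda(R)$ makes this distance at most $\theta_\lambda(R)$. The only cosmetic difference is that the paper passes through the identity $\|<j_\lambda(y)-j_\lambda(z)>\|_\infty=\|<<j_\lambda(y)>-<j_\lambda(z)>>\|_\infty\le\|<j_\lambda(y)>-<j_\lambda(z)>\|_\infty$ and invokes continuity of $\omega$, whereas you phrase the same reduction as a congruence mod $\mathbb{Z}^M$ and note the infimum in \eqref{7.1-0} is attained on a finite set, so monotonicity of $\omega$ suffices; both are fine.
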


\begin{proof}
Note that, since $B$ is 1-periodic,
$$
\aligned
|B(x)-B(y)| &= |B(y +[x-y] +<x-y>)- B(y)|\\
&= | B(y+<x-y>) -B(y)|\\
&\le \omega (\| <x-y>\|_\infty)
\endaligned
$$
for any $x,y\in \mathbb{R}^M$.
It follows that
$$
|A(x+y)-A(x+z)|\le \omega (\|<j_\lambda (y)-j_\lambda (z)>\|_\infty),
$$
for any $x,y,z\in \rd$.
This implies that
$$
\rho_1 (R)
\le \sup_{y\in \rd} \inf_{\substack{z\in \rd\\ \|z\|_\infty\le R}}
\omega ( \|<j_\lambda (y)-j_\lambda (z) >\|_\infty).
$$
Using 
 $$
 \aligned
 \|<j_\lambda (y) -j_\lambda (z)>\|_\infty
 &=\| < <j_\lambda (y)>-<j_\lambda (z)>> \|_\infty\\
& \le \|<j_\lambda (y)> -<j_\lambda (z)> \|_\infty,
\endaligned
$$
we obtain 
$$
\rho_1 (R)
\le \sup_{y\in \rd} \inf_{\substack{z\in \rd\\ \|z\|_\infty\le R}}
\omega ( \| <j_\lambda (y)> -<j_\lambda (z)>
\|_\infty)
\le \omega \big(\theta_\lambda (R) \big),
$$
where we have used the continuity of $\omega(\delta)$ for the second inequality.
\end{proof}

Let $\lambda_i =(\lambda_i^1, \lambda_i^2, \dots, \lambda_i^{m_i})\in \mathbb{R}^{m_i}$
for each $1\le i\le d$, and 
$$
j_{\lambda_i} (t)= (\lambda_i^1 t, \lambda_i^2 t, \dots, \lambda_i^{m_i} t)\in \mathbb{R}^{m_i}
\quad \text{ for } t\in \mathbb{R}.
$$
Thus, for $z=(z_1, z_2, \dots, z_d)\in \rd$,
$$
j_\lambda (z)=( j_{\lambda_1} (z_1), j_{\lambda_2} (z_2), \dots, j_{\lambda_d} (z_d)).
$$
It follows that
$$
\| x-<j_\lambda (z)>\|_\infty
=\max_{1\le i\le d} \| x_i - <j_{\lambda_i} (z_i)>\|_\infty,
$$
where $x=(x_1, x_2, \dots, x_d)\in \mathbb{R}^M$ and $x_i\in \mathbb{R}^{m_i}$.
This implies that
\begin{equation}\label{7.1-10}
\theta_\lambda (R)=\max_{1\le i\le d} \theta_{\lambda_i} (R),
\end{equation}
where
\begin{equation}\label{7.1-11}
\theta_{\lambda_i} (R)
=\sup_{x\in [-1/2, 1/2]^{m_i}}
\inf_{\substack{t\in \mathbb{R}\\ |t|\le R}}
\| x- <j_{\lambda_i} (t)>\|_\infty.
\end{equation}

Note that if $m_i=1$, then $\theta_{\lambda_i} (R)=0$ for $R$ large.
We will use the Erd\"os-Turan-Koksma inequality in the discrepancy theory to estimate the function 
$\theta_{\lambda_i} (R)$, defined by (\ref{7.1-11}), for $m_i\ge 2$.

Let $P=P_N=\{ x_1, x_2, \dots, x_N\}$ be a finite subset of $[-1/2, 1/2]^m$.
The discrepancy of $P$ is defined as
$$
D_N (P)=\sup_{B} \left| \frac{A(B; P)}{N} -|B|\right|,
$$
where the supremum is taken over all rectangular boxes
$B=[a_1, b_1]\times \cdots [a_m, b_m] \subset [-1/2, 1/2]^m$,
and $A(B; P)$ denotes the number of elements of $P$ in $B$.
It follows from the Erd\"os-Turan-Koksma inequality that
\begin{equation}\label{Erdos}
D_N (P)
\le C \left\{ \frac{1}{H}
+\sum_{\substack {n\in \mathbb{Z}^m\\ 0< \| n\|_\infty\le H}}
\frac{1}{(1+|n_1|) \cdots (1+|n_m|)}
\left| \frac{1}{N}\sum_{x\in P}
e^{2\pi i (n\cdot  x)} \right| \right\}
\end{equation}
for any $H\ge 1$, where $C$ depends only on $m$ (see e.g. \cite[p.15]{Tichy}).
It is not hard to see that
\begin{equation}\label{7.13}
\max_{y\in [-1/2, 1/2]^m}\min_{z\in P_N}
\| y-z\|_\infty
\le \frac12 \left[ D_N (P_N)\right]^{\frac{1}{m}}.
\end{equation}

\begin{lemma}\label{lemma-7.2}
Let $R\ge 2$ and $\ell\ge 2$ be two positive integers.
We divide the interval $[-R, R]$ into $2R\ell$ subintervals of length $1/\ell$.
Let $N=2R\ell$ and
$$
P_N=\left\{ x=<j_\lambda (t)> \in [-1/2, 1/2]^m: \  t=j +\frac{k}{\ell}, \ -R\le j\le R-1
\text{ and } 0\le k\le \ell-1 \right\},
$$
where $\lambda=(\lambda^1, \dots, \lambda^m)\in \mathbb{R}^m$ and $m\ge 2$.
Suppose that  there exist $c_0>0$ and  $\tau>0$ such that
\begin{equation}\label{7.2-0}
| n\cdot \lambda|
\ge c_0\, |n|^{-\tau} \quad \text{ for any } n\in \mathbb{Z}^m\setminus \{ 0\}.
\end{equation}
Then
\begin{equation}\label{7.2-0-0}
D_N (P_N)\le C\, \left\{ R^{-\frac{1}{\tau +1}} (\log R)^{m-1} +N^{-1} R^{1+\frac{1}{\tau +1}} \left(\log R\right)^{m-1}\right\},
\end{equation}
where $C$ depends only on $m$, $c_0$, $|\lambda|$ and $\tau$.
\end{lemma}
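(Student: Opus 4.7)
The plan is to apply the Erd\H{o}s--Tur\'an--Koksma inequality (\ref{Erdos}) with a parameter $H$ to be optimized at the end, and to estimate the resulting exponential sums by exploiting the product structure of $P_N$ together with the Diophantine hypothesis (\ref{7.2-0}).

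First, fix $n \in \mathbb{Z}^m \setminus \{0\}$ and set $\alpha = n\cdot\lambda$. Since $\langle j_\lambda(t)\rangle$ and $j_\lambda(t) = t\lambda$ differ by an element of $\mathbb{Z}^m$, one has $e^{2\pi i n \cdot \langle j_\lambda(t)\rangle} = e^{2\pi i \alpha t}$. With $t = j + k/\ell$, the character sum attached to $P_N$ factors:
\begin{equation*}
\frac{1}{N}\sum_{x \in P_N} e^{2\pi i n\cdot x} \;=\; \frac{1}{2R\ell}\Bigl(\sum_{j=-R}^{R-1} e^{2\pi i \alpha j}\Bigr)\Bigl(\sum_{k=0}^{\ell-1} e^{2\pi i \alpha k/\ell}\Bigr),
\end{equation*}
and the two geometric sums admit the standard bounds $\min(2R, 1/(2\|\alpha\|))$ and $\min(\ell, 1/(2\|\alpha/\ell\|))$ respectively, with $\|\cdot\|$ the distance to the nearest integer.

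Second, I would upgrade the Diophantine hypothesis (\ref{7.2-0}) to a lower bound on the distance to integers: applying (\ref{7.2-0}) to the augmented vector $(n, -p) \in \mathbb{Z}^{m+1}$ paired with $(\lambda^1, \dots, \lambda^m, 1) \in \mathbb{R}^{m+1}$, where $p$ is the nearest integer to $n\cdot\lambda$, yields $\|n\cdot\lambda\| \ge c_1 |n|^{-\tau}$; an analogous step with $(\lambda, \ell)$ controls $\|\alpha/\ell\|$ from below. Combining the two geometric-sum bounds in the two possible regimes (trivial $k$-sum versus trivial $j$-sum) produces
\begin{equation*}
\Bigl|\frac{1}{N}\sum_{x \in P_N} e^{2\pi i n \cdot x}\Bigr| \;\le\; \frac{C H^\tau}{R} \;+\; \frac{C R H^\tau}{N}
\end{equation*}
for $0 < \|n\|_\infty \le H$, the first summand being the ``continuous'' contribution (where the $k$-sum is bounded trivially by $\ell$) and the second the discretization contribution (where the $j$-sum is bounded trivially by $2R$).

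Third, inserting this into (\ref{Erdos}) and estimating the weighted lattice sum
\begin{equation*}
\sum_{0 < \|n\|_\infty \le H} \frac{|n|^\tau}{(1+|n_1|)\cdots(1+|n_m|)} \;\le\; C H^\tau (\log H)^{m-1}
\end{equation*}
(the saving of one logarithm comes from singling out the coordinate of $n$ of maximal modulus, which absorbs the $|n|^\tau$ factor while the remaining $m-1$ coordinates each contribute a $\log H$), I arrive at
\begin{equation*}
D_N(P_N) \;\le\; C\Bigl\{ \frac{1}{H} + \frac{H^\tau(\log H)^{m-1}}{R} + \frac{R H^\tau(\log H)^{m-1}}{N}\Bigr\}.
\end{equation*}
Balancing the first two terms sets $H \sim R^{1/(\tau+1)}$; with this choice $H^\tau/R = R^{-1/(\tau+1)}$ and $\log H \sim \log R$, and (\ref{7.2-0-0}) drops out.

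The main obstacle I expect is the careful separation into the two regimes and the precise accounting of the two terms in the conclusion, as well as the upgrade from $|n\cdot\lambda| \ge c_0 |n|^{-\tau}$ to a distance-to-integers lower bound $\|n\cdot\lambda\|\ge c_1|n|^{-\tau}$ (and the corresponding statement for $\|(n\cdot\lambda)/\ell\|$, which requires an extra dependence on $\ell$ that must be tracked). A secondary technical point is extracting the $(\log R)^{m-1}$ saving rather than $(\log R)^m$ from the Diophantine-weighted lattice sum.
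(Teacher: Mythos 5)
Your idea of factoring the sum over $P_N$ into a product of two geometric sums is a genuinely different route from the paper, which instead compares the discrete average over $t_{jk}=j+k/\ell$ to the integral $\frac{1}{2R}\int_{-R}^R e^{2\pi i(n\cdot\lambda)t}\,dt$ using the Riemann-sum error bound $C\ell^{-1}\|f'\|_\infty$ with $f(t)=e^{2\pi i (n\cdot\lambda)t}$. However, your version has a genuine gap at the ``upgrade'' step. The bound $|\sum_{j=-R}^{R-1} e^{2\pi i\alpha j}| \le 1/(2\|\alpha\|)$ requires a lower bound on the distance of $\alpha = n\cdot\lambda$ to the nearest integer, and you propose to get it by applying (\ref{7.2-0}) to the augmented vector $(n,-p)\in\mathbb{Z}^{m+1}$ paired with $(\lambda,1)\in\mathbb{R}^{m+1}$. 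But (\ref{7.2-0}) is a hypothesis on $\lambda\in\mathbb{R}^m$ only; it says nothing about the $(m+1)$-vector $(\lambda,1)$, and in general a lower bound on $|n\cdot\lambda|$ gives no information at all on $\|n\cdot\lambda\|$ --- $n\cdot\lambda$ could be very close to a nonzero integer while $|n\cdot\lambda|$ is large. The same problem is worse for $\|\alpha/\ell\|$, which would require yet another unavailable Diophantine condition with $\ell$-dependence. The paper sidesteps this entirely because $|\frac{1}{2R}\int_{-R}^R e^{2\pi i\alpha t}\,dt| \le \frac{1}{\pi R|\alpha|}$ involves the actual value $|\alpha|=|n\cdot\lambda|$, not its fractional part, so (\ref{7.2-0}) applies verbatim, and the discretization error contributes $C\ell^{-1}|n\cdot\lambda|\le C\ell^{-1}|\lambda||n|$.

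A secondary discrepancy: your claimed exponential-sum bound $\frac{CH^\tau}{R}+\frac{CRH^\tau}{N}$ does not reproduce the lemma's conclusion. With $H=R^{1/(\tau+1)}$ the second piece gives $N^{-1}R^{1+\tau/(\tau+1)}(\log R)^{m-1}=N^{-1}R^{2-1/(\tau+1)}(\log R)^{m-1}$, whereas (\ref{7.2-0-0}) has exponent $1+1/(\tau+1)$. These agree only when $\tau=1$. In the paper the discretization term is $\ell^{-1}|n|$ (linear in $|n|$, not $|n|^\tau$), which feeds into the lattice sum as $RN^{-1}H(\log H)^{m-1}$ and correctly produces $N^{-1}R^{1+1/(\tau+1)}(\log R)^{m-1}$. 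Your ``weighted lattice sum with $(\log H)^{m-1}$'' estimate is fine; it is the two ingredients feeding into it that need to be replaced by the paper's integral comparison.
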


\begin{proof}
 Let $f(t) =e^{2\pi i (n\cdot \lambda) t}$ and
\begin{equation}\label{7.2-1}
I_n =\frac{1}{N}\sum_{x\in P_N} e^{2\pi  i (n\cdot x)}
=\frac{1}{N}\sum_{j,k} f (t_{jk}),
\end{equation}
where $n\in \mathbb{Z}^m\setminus \{ 0\}$,
$j=-R, \dots R-1$, $k=0, \dots, \ell-1$ and $t_{jk}=j +\frac{k}{\ell}$.
Using
$$
\left| \frac{1}{2R}\int_{-R}^R f (t)\, dt
-\frac{1}{N}\sum_{j,k} f (t_{jk}) \right|
\le C\, \ell^{-1}  \| f^\prime\|_\infty,
$$
we obtain
$$
\aligned
|I_n|
&\le  C\ell^{-1} \| f^\prime\|_\infty + \left|\frac{1}{2R} \int_{-R}^R f(t)\, dt \right|\\
&\le C\, \ell^{-1} | n \cdot \lambda| +\frac{C}{R | n\cdot \lambda|}\\
&\le C \left\{ \ell^{-1} | n| + R^{-1} |n|^\tau \right\},
\endaligned
$$
where we have used the assumption (\ref{7.2-0}).
In view of (\ref{Erdos}), we obtain
$$
\aligned
D_N (P_N)
&\le C \left\{ \frac{1}{H}
+ \sum_{\substack { n\in \mathbb{Z}^m\\ 0<\| z\|_\infty\le H}}
\frac{\ell^{-1} |n| +R^{-1} |n|^\tau}{(1+|n_1|)\cdots (1+|n_m|)}\right\}\\
&\le C \left\{ \frac{1}{H}
+ \int_{|x| \le C\, H} 
\frac{ \ell^{-1} |x| +R^{-1} |x|^\tau }{(1+|x_1|) \cdots (1+|x_m|) }\, dx\right\}\\
&\le C\left\{ \frac{1}{H}
+R N^{-1} H \left( \log H\right)^{m-1} + R^{-1} H^\tau \left( \log H \right)^{m-1} \right\}
\endaligned
$$
for any $H\ge 2$.
By taking $H=R^{\frac{1}{\tau +1}}$, we obtain the estimate (\ref{7.2-0-0}).
\end{proof}

\begin{theorem}\label{theorem-7.3}
Let $\lambda=(\lambda_1, \dots, \lambda_d)$ with $\lambda_i =(\lambda_i^1, \dots, \lambda_i^{m_i})
\in \mathbb{R}^{m_i}$ for $1\le i\le d$.
Suppose that there exist $c_0>0$ and $\tau>0$ such that for each $1\le i\le d$ with $m_i\ge 2$, 
\begin{equation}\label{7.3-0}
|n \cdot \lambda_i| \ge c_0\, |n|^{-\tau} 
\quad \text{ for any } n\in \mathbb{Z}^{m_i}\setminus \{ 0\}.
\end{equation}
Then, for any $R\ge 2$,
\begin{equation}\label{7.3-0-0}
\theta_\lambda (R) \le C\, R^{-\frac{1}{\widetilde{m} (\tau +1)}}
\left(\log R\right)^{1-\frac{1}{\widetilde{m}}},
\end{equation}
where $\widetilde{m}=\max\{ m_1, \dots, m_d\} $ and
$C$ depends only on $d$, $\widetilde{m}$, $c_0$ and $\tau$.
\end{theorem}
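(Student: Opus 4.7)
The plan is to reduce Theorem~7.3 to Lemma~7.2 via the decomposition $\theta_\lambda(R)=\max_{1\le i\le d}\theta_{\lambda_i}(R)$ from (7.1-10), so that it suffices to bound each $\theta_{\lambda_i}(R)$ individually. For those indices $i$ with $m_i=1$, the bound is trivial: Weyl equidistribution for the single irrational $\lambda_i^1$ gives $\theta_{\lambda_i}(R)=0$ for all $R$ sufficiently large, and for smaller $R$ the bound is absorbed into the constant. The substantive task is to prove, for each $i$ with $m_i\ge 2$,
\begin{equation*}
\theta_{\lambda_i}(R)\le C\, R^{-1/(m_i(\tau+1))}(\log R)^{1-1/m_i}.
\end{equation*}

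Fix such an $i$ and set $m=m_i$, $\lambda=\lambda_i$. The Diophantine hypothesis (7.3-0) is exactly the condition (7.2-0) required by Lemma~7.2. Combining Lemma~7.2 with the standard estimate (7.13) relating worst-case approximation to discrepancy, I obtain, for every integer $\ell\ge 2$ and $N=2R\ell$,
\begin{equation*}
\theta_{\lambda_i}(R)\le \tfrac{1}{2}\bigl[D_N(P_N)\bigr]^{1/m}\le C\Bigl\{R^{-1/(\tau+1)}(\log R)^{m-1}+N^{-1}R^{1+1/(\tau+1)}(\log R)^{m-1}\Bigr\}^{1/m}.
\end{equation*}
To make this as sharp as possible I would balance the two terms inside the braces, which requires $N\asymp R^{1+2/(\tau+1)}$. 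Concretely, taking $\ell=\lfloor R^{2/(\tau+1)}/2\rfloor$ (valid for $R\ge 2^{(\tau+1)/2}$; smaller $R$ are absorbed in $C$) equalizes the two contributions up to a constant and gives
\begin{equation*}
D_N(P_N)\le C\, R^{-1/(\tau+1)}(\log R)^{m-1}.
\end{equation*}
Taking $m$-th roots yields the asserted bound for $\theta_{\lambda_i}(R)$.

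Finally, I would pass to the maximum over $i$. For $R$ large enough that $\log R\ge 1$, the function $m\mapsto R^{-1/(m(\tau+1))}(\log R)^{1-1/m}$ is increasing in $m$ (both factors are increasing), so the supremum over $i$ is attained when $m_i=\widetilde m$, which gives precisely (7.3-0-0). Values of $R$ in the bounded range $[2,e]$ contribute only a constant and are absorbed into $C$.

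The main obstacle is really the calibration $N\asymp R^{1+2/(\tau+1)}$ in Lemma~7.2; once that balance is found, everything else is bookkeeping. There is no new analytic input: the Erd\H os--Tur\'an--Koksma inequality embedded in Lemma~7.2 and the Diophantine assumption (7.3-0) together do all the work, and the reduction (7.1-10) localizes the argument to a single coordinate block.
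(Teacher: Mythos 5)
Your argument is correct and follows essentially the same route as the paper: reduce via (7.1-10) to estimating each $\theta_{\lambda_i}(R)$, dispose of the $m_i=1$ coordinates trivially, and for $m_i\ge 2$ combine (7.13) with Lemma 7.2 after balancing the two discrepancy terms by taking $N\asymp R^{1+2/(\tau+1)}$. The paper's proof is briefer (it simply sets $N=CR^{1+2/(\tau+1)}$ and invokes (7.1-10) without spelling out the monotonicity of $m\mapsto R^{-1/(m(\tau+1))}(\log R)^{1-1/m}$ or the handling of small $R$), but your additional bookkeeping is correct and fills in exactly the steps the paper leaves implicit.
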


\begin{proof}
Suppose $m_i\ge 2$. Let $P=P_N$ be same as in Lemma \ref{lemma-7.2}.
It follows from  (\ref{7.13})  and Lemma \ref{lemma-7.2}  that
$$
\aligned
\theta_{\lambda_i} (R)  
&\le C \left\{ R^{-\frac{1}{\tau +1}} \left( \log R\right)^{m_i-1} 
+N^{-1} R ^{1+\frac{1}{\tau +1}} \left( \log R\right)^{m_i-1} \right\}^{\frac{1}{m_i}}\\
&\le C\, R^{-\frac{1}{m_i (\tau +1)}} \left( \log R\right)^{1-\frac{1}{m_i}},
\endaligned
$$
where we have taken $N=C\, R^{1+\frac{2}{\tau +1}}$.
This, together with (\ref{7.1-10}), gives (\ref{7.3-0-0}).
\end{proof}

\begin{remark}\label{remark-7.1}
{\rm
Suppose that $A(x)=B(j_\lambda (x))$ and $B(y)$ is 1-periodic.
Also assume that $\lambda$ satisfies the condition (\ref{7.3-0})
and $B(y)$ is H\"older continuous of order $\alpha$ for some $\alpha \in (0,1]$.
It follows from Lemma \ref{lemma-7.1} and Theorem \ref{theorem-7.3} that
\begin{equation}
\rho (R) \le C\, R^{-\frac{\alpha}{\widetilde{m} (\tau +1)}}
\left(\log R\right)^{\alpha (1-\frac{1}{\widetilde{m}})}
\end{equation}
for $R\ge 1$.
In view of Remark \ref{remark-1.1} this leads to 
$$
\| u_\varep -u_0\|_{L^2(\Omega)}
\le C_\gamma\,  \varep^\gamma\,  \| u_0\|_{W^{2, p}(\Omega)}
$$
for any $0<\gamma < \frac{\alpha}{\alpha +\widetilde{m} (\tau +1)}$.
We point out that if $A(y)$ satisfies  the condition (\ref{7.3-0}) and is sufficiently smooth, 
the sharp estimate $\| u_\varep -u_0\|_{L^2(\Omega)} =O(\varep)$ was obtained in \cite{Kozlov-1979}.
}
\end{remark}

\bibliography{s33.bbl}

\medskip

\begin{flushleft}
Zhongwei Shen, 

 Department of Mathematics
 
University of Kentucky

Lexington, Kentucky 40506,
USA. 

 Fax: 1-859-257-4078.

E-mail: zshen2@uky.edu
\end{flushleft}

\medskip

\noindent \today

\end{document}